% Version 13 July 2013 - Dmitry

% The theory of subnormal operators ?

%    chavan@iitk.ac.in

% WE HAVE SUBSTITUTED \gamma_ by \bbeta_ and \gamma^ by \bbeta^

%\documentclass[10pt]{amsproc}
%\documentclass[11pt]{amsart}
% \documentclass[a4paper,12pt]{amsart}

% Alphabetizer from Tuvalu
% http://alphabetizer.flap.tv/index.php
% Vstavil MSC (2010) - D.V.

\documentclass[a4paper,oneside,11pt] {amsart}
\reversemarginpar  % These 2 commands work very fine

\usepackage{amsthm}
\usepackage{amsfonts}
\usepackage{amsmath}
\usepackage{amssymb}
\usepackage{mathrsfs}
\usepackage{epsfig}
\usepackage{pst-plot}
\usepackage{ifthen}
\usepackage[active]{srcltx}
%\usepackage[all]{xy}
%\usepackage{diagrams}

% \usepackage[notcite,notref]{showkeys}

%\usepackage{atveryend}

% THE conjuration  Atveryend

%\makeatletter \let\origcitation\citation
%\AtEndDocument{\def\mycites{}
%\def\citation#1{\g@addto@macro\mycites{#1^^J}\origcitation{#1}}} \AtVeryEndDocument{\newwrite\citeout\immediate\openout\citeout=\jobname.cit
%\immediate\write\citeout{\mycites}\immediate\closeout\citeout} \makeatother

 \setlength{\topmargin}{-.8in}
 \setlength{\textwidth}{6.2in}
 \setlength{\textheight}{10.5in}

%\frenchspacing \oddsidemargin 0.3cm \evensidemargin 0.9cm \textwidth 6.4in

%\lim_{x\rightarrow \infty}
\newcommand{\ncom}{\newcommand}
\ncom{\ul}{\underline}
\ncom{\ol}{\overline}
\ncom{\bq}{\begin{equation}}
\ncom{\eq}{\end{equation}}
\ncom{\beqn}{\begin{eqnarray*}}
\ncom{\eeqn}{\end{eqnarray*}}
\ncom{\beq}{\begin{eqnarray}}
\ncom{\eeq}{\end{eqnarray}}
\ncom{\nno}{\nonumber}
\ncom{\rar}{\rightarrow}
\ncom{\Rar}{\Rightarrow}
\ncom{\noin}{\noindent}
\ncom{\bc}{\begin{centre}}
\ncom{\ec}{\end{centre}}
\ncom{\sz}{\scriptsize}
\ncom{\rf}{\ref}
\ncom{\sgm}{\sigma}
\ncom{\Sgm}{\Sigma}
\ncom{\dt}{\delta}
\ncom{\Dt}{\Delta}
\ncom{\la}{\lambda}
\ncom{\La}{\Lambda}
%\ncom{\th}{\theta}
%\ncom{\Th}{\Theta}

\ncom{\al}{\alpha}
\ncom{\be}{\beta}
\ncom{\de}{\delta}
\ncom{\ga}{\gamma}
\ncom{\lmd}{\lambda}
\ncom{\Lmd}{\Lambda}
\ncom{\eps}{\epsilon}

\def \N{\mathbb{N}}

\ncom{\pcc}{\stackrel{P}{>}}
\ncom{\dist}{{\rm\,dist}}
\ncom{\sspan}{{\rm\,span}}
\ncom{\re}{{\rm Re\,}}
\ncom{\im}{{\rm Im\,}}
\ncom{\sgn}{{\rm sgn\,}}
\ncom{\ba}{\begin{array}}
\ncom{\ea}{\end{array}}
\ncom{\eop}{\hfill{{\rule{2.5mm}{2.5mm}}}}
\ncom{\eoe}{\hfill{{\rule{1.5mm}{1.5mm}}}}
\ncom{\eof}{\hfill{{\rule{1.5mm}{1.5mm}}}}
\ncom{\hone}{\mbox{\hspace{1em}}}
\ncom{\htwo}{\mbox{\hspace{2em}}}
\ncom{\hthree}{\mbox{\hspace{3em}}}
\ncom{\hfour}{\mbox{\hspace{4em}}}
\ncom{\hsev}{\mbox{\hspace{7em}}}
\ncom{\vone}{\vskip 2ex}
\ncom{\cH}{{\mathcal H}}
\ncom{\vtwo}{\vskip 4ex}
\ncom{\vonee}{\vskip 1.5ex}
\ncom{\vthree}{\vskip 6ex}
\ncom{\vfour}{\vspace*{8ex}}
\ncom{\norm}{\|\;\;\|}
\ncom{\integ}[4]{\int_{#1}^{#2}\,{#3}\,d{#4}}
\ncom{\inp}[2]{\langle{#1},\,{#2} \rangle}
\ncom{\Inp}[2]{\big\langle{#1},\,{#2} \big\rangle}
\ncom{\vspan}[1]{{{\rm\,span}\#1 \}}}
\ncom{\dm}[1]{\displaystyle {#1}}
\ncom{\Hom}{\operatorname{Hom}}
\ncom{\Hol}{\operatorname{Hol}}

\newcommand{\summ}[2]
{\sum\limits_{\mbox{\tiny $\begin{array}{c}
                            {#1} \\%[1mm]
                             {#2}
                          \end{array}$}}}

\ncom{\defin} {\overset {\text {\rm def} }{=}}

\newtheorem{theorem}{\bf Theorem}[section]
\newtheorem{example}[theorem]{\bf Example}%[section]
%[section]
\newtheorem{proposition}[theorem]{\bf Proposition}%[section]
\newtheorem{corollary}[theorem]{\bf Corollary}%[section]
\newtheorem{lemma}[theorem]{\bf Lemma}%[section]

\newtheorem{question}[theorem]{\bf Question}

\newtheoremstyle
    {remarkstyle}
    {}
    {11pt}
    {}
    {}
    {\bfseries}
    {:}
    {     }
    {\thmname{#1} \thmnumber{#2} }

\theoremstyle{remarkstyle}

\newtheorem{remark}[theorem]{\bf Remark}%[section]
%[section]
\newtheorem{definition}[theorem]{\bf Definition}%[section]

%
% Dmitry NEWCOMMANDS
%

\ncom{\bbeta}{{\tilde\beta}}
\ncom{\rr}{\tau}

\renewcommand{\epsilon}{\varepsilon}
\renewcommand{\kappa}{\varkappa}

\begin{document}

\baselineskip=16pt

\title
% {Spherical and Balanced Multiplication Tuples}
{Spherical Tuples of Hilbert Space Operators}
\author[S. Chavan]{Sameer Chavan}
\address{Department of Mathematics and Statistics\\
Indian Institute of Technology Kanpur, India}
\email{chavan@iitk.ac.in}
\author[D. Yakubovich]{Dmitry Yakubovich}
\address{Department of Mathematics\\
Universidad Aut${\mbox{\'{o}}}$noma de Madrid, Spain}
\email{dmitry.yakubovich@uam.es}
%\author{Sameer Chavan}
\date{}
\begin{abstract}

%We introduce and study two classes of operator tuples, to be
%referred to as, spherical and balanced tuples, in complex Hilbert
%spaces. In particular, we characterize spherical and balanced
%multi-shifts.

We introduce and study a class of operator tuples
in complex Hilbert spaces, which we call spherical tuples.
In particular, we
characterize spherical multi-shifts, and more generally,
multiplication tuples on RKHS. We further use these
characterizations to describe various spectral parts including the
Taylor spectrum. We also find a criterion for
the Schatten $S_p$-class membership of cross-commutators of spherical
$m$-shifts. We show, in particular, that
cross-commutators of non-compact spherical $m$-shifts cannot belong
to $S_p$ for $p \le m$.

We specialize our results to some well-studied classes of
multi-shifts. We prove that
the cross-commutators of a spherical joint  $m$-shift,
which is a $q$-isometry or a $2$-expansion,
belongs to $S_p$ if and only if $p > m.$ We further
give an example of a spherical jointly hyponormal $2$-shift, for which
the cross-commutators are compact but not in $S_p$ for any $p <\infty$.
\end{abstract}

\renewcommand{\thefootnote}{}

\footnote{2010 \emph{Mathematics Subject Classification}: Primary
47A13, 47B32, 46E20; Secondary 47B20, 47B37, 47B47.}

\footnote{\emph{Key words and phrases}: spherical tuple, Taylor spectrum, essential $p$-normality, jointly hyponormal, joint $q$-isometry}

\maketitle

\setcounter{tocdepth}{2} \tableofcontents

%\newpage

\section{Introduction}

The motivation for the present paper comes from different
directions.
% \marginpar{Proofs of Thms 2.10, 2.11?}
Firstly, as there is considerable literature on circular
operators (refer to \cite{Sh}, \cite{Ge}, \cite{A-H-H-K}, \cite{Ml},
\cite{O}), it is natural to look for the higher-dimensional analogs
of circular operators.
There are of course two possible analogs,
namely, {\it poly-circular tuples} and {\it spherical tuples}.
%
%The study of multi-variable weighted shifts (for short, {multi-shifts})
%comes under the class of polycircular tuples, and indeed, there are
%some important papers on this class
%
Multi-variable weighted shifts (for short, {multi-shifts}) form a
subclass of the class of poly-circular tuples, and indeed, there
are some important papers on this latter class (see, for instance,
\cite{J-L}, \cite{Cu-Sa}, \cite{G-H-X}, \cite{D-S}).
There are also several important papers on multivariable
weighted shifts that are spherical, see \cite{Bo}, \cite{At-2},
\cite{Ar}, \cite{B-M}, \cite{A-Z}, \cite{G-H-X}, \cite{G-R-S}.
However, the higher-dimensional
counter-parts of many important results in the masterful
exposition \cite{Sh} by A. Shields are either unknown or not
formulated.
The main objective of this paper is to introduce
spherical operator tuples in an abstract way and
to study some of their basic properties, as well as properties of
spherical multi-variable weighted shifts, which form a subclass
of this class.
%
%Unfortunately, the systematic study of spherical tuples is not
%undertaken yet (see, however, \cite{Bo}, \cite{At-2}, \cite{Ar}, \cite{B-M},
%\cite{A-Z}, \cite{G-H-X}, \cite{G-R-S}). In particular, the higher-dimensional
%counter-parts of many important results in the masterful
%exposition \cite{Sh} by A. Shields are either unknown or not
%formulated. The main objective of this paper is to introduce and
%to study spherical operator tuples.
%
%
%
%The main objective of this paper is to introduce and study two
%classes of operator tuples, to be referred to as, spherical and
%balanced tuples. The class of balanced tuples is motivated
%essentially by the recent work \cite{Ch-Cu} on spherical Cauchy
%dual tuples.

One of our motivations is the following  phenomenon
concerning multi-dimensional cross-comm\-ut\-at\-ors and
Hankel operators, which is often referred to as ``cut-off'':
%We are partly motivated by the following  phenomenon concerning multi-dimensional cross-commutators and
%Hankel operators, which is often referred to as ``cut-off'':
in several particular situations, these operators cannot be too small
unless they are zero, see
\cite{JansWo}, \cite{Wallst}, \cite{Z-Sp}, \cite{D-V},
% \cite{D-Y},
\cite{Li93}, \cite{Y}. More recently,
related questions have been studied in relation with the multi-variable
Berger-Shaw theory and the so-called Arveson conjecture,
see, for instance, \cite{D-Y}, \cite{Ar-p}, \cite{Eshm}, \cite{F-X},
\cite{G-KW-Zh}, \cite{KSh} and others.
In our context, we prove that
cross-commutators of non-compact spherical $m$-shifts do not belong
to $S_p$ for $p \le m$.

% and the review \cite{Guo-rev}.

If $\mathbb N$ stands for the set of non-negative integers, we
denote by ${\mathbb N}^m$ for the cartesian product ${\mathbb N}
\times \cdots \times {\mathbb N}~(m ~\text{times}).$ Let $p = (p_1,
\cdots, p_m)$ and $n = (n_1, \cdots, n_m)$ be in ${\mathbb N}^m.$ We
write $p \leq n$ if $p_j \leq n_j$ for $j=1, \cdots, m.$ For $p\leq
n,$ $n \choose p$ is understood to be the product ${n_1 \choose p_1}
\cdots {n_m \choose p_m}$ and $|p|$ is understood to be $p_1 +
\cdots + p_m.$

For a Hilbert space
$\mathcal H$, let $\mathcal H^{(m)}$ denote the orthogonal direct
sum of $m$ copies of $\mathcal H.$
%
%\marginpar{\bf Change-2 \\First Sentence}
%
Let ${B}({\mathcal H})$ denote the Banach algebra of
bounded linear operators on $\mathcal H.$
If the opposite is not specified, all the operators we consider will be
assumed linear and bounded.

 If $T = (T_1, \cdots, T_m)$
is an $m$-tuple of commuting bounded linear operators $T_j~(1 \leq j
\leq m)$ on $\mathcal H$ then we set $T^*$ to be $(T^*_1,
\cdots, T^*_m)$ and $T^p$ to be $T^{p_1}_1\cdots T^{p_m}_m.$

The main object of our interest in this paper is the class of so-called spherical tuples.

\begin{definition} Let $T$ be an $m$-tuple of commuting bounded linear
operators $T_1, \cdots, T_m$ on an infinite dimensional Hilbert
space $\mathcal H.$ Let $\mathcal U(m)$ denote the group of
complex $m \times m$ unitary matrices. For $U=(u_{jk})_{1 \leq j,
k \leq m} \in \mathcal U(m)$, the commuting operator $m$-tuple
$T_U$ is given by \beq (T_U)_{j}= \sum_{k=1}^mu_{jk}T_k~(1 \leq j
\leq m). \eeq We say that $T$ is {\it spherical} if for every $U
\in \mathcal U(m),$ there exists a unitary operator $\Gamma(U)\in
B(\mathcal H)$ such that $\Gamma(U)T_j=(T_U)_j\Gamma(U)$ for all
$j=1, \cdots, m.$ If, further, $\Gamma$ can be chosen to be a
strongly continuous unitary representation of $\mathcal U(m)$ on
$\mathcal H$ then we say that $T$ is {\it strongly spherical}.
\end{definition}

\begin{remark}
\label{rk1.2}
Let $T=(T_1, \cdots, T_m)$ be a spherical $m$-tuple.
\begin{enumerate}

\item Any permutation
of $T$ is unitarily equivalent to $T.$ In particular, $T_j$ is
unitarily equivalent to $T_k$ for any $1 \leq j, k \leq m.$

%OLD VERSION
%
%\item Since unital $*$-representations $\pi$ send unitaries to unitaries, $\pi(T):=(\pi(T_1), \cdots, \pi(T_m))$ is also a
%spherical $m$-tuple. Indeed, $\pi(\Gamma(U))\pi(T_j)=(\pi(T)_U)_j\pi(\Gamma(U))$ for all $j=1,
%\cdots, m,$ where $\pi(\Gamma(U))$ is unitary. We also observe that $T^*=(T^*_1, \cdots, T^*_m)$ is spherical.

\item For any unital $*$-representation
$\pi: B({\mathcal H})\to B({\mathcal H_1})$,
$\pi(T):=(\pi(T_1), \cdots, \pi(T_m))$ is also a
spherical $m$-tuple. Indeed, since $\pi$ sends unitaries to unitaries,
$\pi(\Gamma(U))\pi(T_j)=(\pi(T)_U)_j\pi(\Gamma(U))$ for all $j=1,
\cdots, m$, and $\pi(\Gamma(U))$ is unitary.
We also observe that $T^*=(T^*_1, \cdots, T^*_m)$ is spherical.

\end{enumerate}
\end{remark}

Let $T=(T_1, \cdots, T_m)$ be an $m$-tuple
of commuting bounded linear operators
on a Hilbert space $\mathcal H$.
Let $D_T$ denote the linear transformation from
$\mathcal H$ to $\mathcal H^{(m)}$, given by
\beqn
D_{T}h := (T_1h, \cdots, T_mh)~(h \in \mathcal H). % , \qquad D_S: \mathcal H\to \mathcal H^m
\eeqn
% The second object of our interest is the class of balanced tuples.
% \begin{definition}
% For an $m$-tuple of commuting bounded linear operators $T=(T_1, \cdots, T_m)$,
%  on an infinite dimensional Hilbert space $\mathcal H$
%Set
%$$Q_T(X) \mathrel{\mathop:}= \sum_{i=1}^mT^*_iXT_i~(X \in B(\mathcal H)).$$
% Assume that $\ker(D_T)=\{0\}.$
Note that
$\mbox{ker}(D_{T})=\bigcap_{i=1}^{m}\text{ker}(T_i).$

Next, we need to invoke the basics of the theory of multi-shifts
\cite{J-L}. First a definition.

Let $T$ be an $m$-tuple of commuting operators $T_1, \cdots, T_m$ on
a Hilbert space $\mathcal H.$ A closed subspace $\mathcal M$ of
$\mathcal H$ is said to be {\it cyclic} for $T$ if \beqn \mathcal H
= \bigvee \{T^nx : x \in \mathcal M,~ n \in \mathbb N^m\}. \eeqn We
say that $T$ is {\it cyclic} with {\it cyclic vector} $x$ if the
subspace spanned by $x$ is cyclic for $T.$
%there exists $x \in
%\mathcal H$ (to be referred to as a ) such that \beqn \mathcal H =
%\bigvee \{T^nx : n \in \mathbb N^m\}. \eeqn

Let $\left\{{w^{(j)}_n} : 1 \leq j \leq m, n \in {\mathbb
N}^m\right\}$ be a multi-sequence of complex numbers. An {\it $m$-variable
weighted shift} $T = (T_1, \cdots, T_m)$ with respect to an
orthonormal basis $\{e_n\}_{n \in \mathbb N^m}$ of a Hilbert space
$\mathcal H$ is defined by \beqn T_je_n \mathrel{\mathop:}=
w^{(j)}_n e_{n + \epsilon_j}~(1 \leq j \leq m), \eeqn where
$\epsilon_j$ is the $m$-tuple with $1$ in the $j$th place and zeros
elsewhere.
The notation $T : \{{w^{(j)}_n}\}_{ n \in \mathbb N^m}$ will mean
%
%We indicate the $m$-variable weighted shift tuple $T$
%with weight sequence $\left\{{w^{(j)}_n} : 1 \leq j \leq m, n \in
%{\mathbb N}^m\right\}$ by $T : \{{w^{(j)}_n}\}_{ n \in \mathbb N^m}$.
%
that $T$ is the $m$-variable weighted shift tuple with weight
multi-sequence $\left\{{w^{(j)}_n} : 1 \leq j \leq m, n \in {\mathbb
N}^m\right\}$. Notice that $T_j$ commutes with $T_k$ if and only if
$w^{(j)}_nw^{(k)}_{n + \epsilon_j}=w^{(k)}_nw^{(j)}_{n +
\epsilon_k}$ for all $n \in {\mathbb N}^m.$ By \cite[Corollary
9]{J-L}, $T$ is bounded if and only if
$$
\sup \big\{|w^{(j)}_n| : 1 \leq j
  \leq m, n \in {\mathbb N}^m \big\} < \infty.
$$
We always assume that the weight multi-sequence of $T$ consists of
positive numbers and that $T$ is commuting. Note that $T :
\{{w^{(j)}_n}\}_{ n \in \mathbb N^m}$ is cyclic with cyclic vector
$e_0.$

%The spherical Cauchy dual $T^{\mathfrak s}$ of $T :
%\{{w^{(j)}_n}\}_{ n \in \mathbb N^m}$ is given by
%% \marginpar{\bf Change-3 \\ New Paragraph}
%\beqn T^{\mathfrak{s}}_ie_n :=
%\frac{w^{(i)}_n}{\delta^2_n} \;\, e_{n + \epsilon(i)}~(1 \leq i \leq m),
%\eeqn
%where we put
%\begin{equation}
%\label{delta_n}
%\delta_n := \Big(
%{\sum_{i=1}^m \big(w^{(i)}_{n}\big)^2}
%\Big)^{1/2}\enspace ~(n \in
%{\mathbb{N}}^m).
%\end{equation}
%Thus $T^{\mathfrak s}$ is an $m$-variable weighted shift with the
%(possibly unbounded) weight multi-sequence $\left
%\{{w^{(i)}_n}/{\delta^2_n} : 1 \leq i  \leq m, n \in {\mathbb{N}}^m
%\right\}.$ A routine calculation shows that $T^{\mathfrak{s}}$ is
%commuting if and only if $\delta_{n+ \epsilon(i)} = \delta_{n+
%\epsilon(j)}$ for all $1 \leq i, j \leq m.$ Since $(T^{\mathfrak
%s}_j)^{\mathfrak s}e_n=T_je_n$ for every $j=1, \cdots, m$ and $n \in
%\mathbb N^m$, the spherical Cauchy dual of a balanced, commuting
%weighted shift is again balanced.

Let $T : \{{w^{(j)}_n}\}_{ n \in \mathbb N^m}$ be an $m$-variable
weighted shift. Define $\beta_n = \|T^ne_0\|~(n \in \mathbb N^m)$ and
consider the Hilbert space $H^2(\beta)$ of formal power series
\beqn
f(z)=\sum_{n \in \mathbb N^m}a_n z^n
\eeqn
such that
\beqn
\|f\|^2_{\beta}=\sum_{n \in \mathbb N^m}|a_n|^2 \beta^2_n
< \infty.
\eeqn
It follows from \cite[Proposition 8]{J-L} that any
$m$-variable weighted shift $T$  is unitarily equivalent to the
$m$-tuple $M_z=(M_{z_1}, \cdots, M_{z_m})$ of multiplication by the
co-ordinate functions $z_1, \cdots, z_m$ on the corresponding space
$H^2(\beta)$.
Notice that the linear set of polynomials in $z_1, \cdots, z_m$ (that
is, formal power series with finitely many non-zero coefficients) is
dense in $H^2(\beta)$. Equivalently, $M_z$ is cyclic with cyclic
vector the constant formal power series $1$ (that is, the formal
series $\sum_{n \in \mathbb N^m} a_n z^n$, for which $a_n=0$ for all
non-zero $n \in \mathbb N^m$ and $a_{0}=1$). The relation between
weights $w_n^{(j)}$ and the sequence $ \beta_n$ is given by
\bq
\label{w-beta}
w_n^{(j)}= {\beta_{n+\eps_j}}/{\beta_n},
\qquad 1\le j\le m, \; n\in \mathbb{N}^m.
\eq
Note further that
$\mbox{ker}(D_{M^*_z})$ is spanned by the constant formal power series $1.$

Recall that all formal power series in $H^2(\beta)$ converge
absolutely on the point-spectrum $\sigma_p(M^*_z)$ of the adjoint
$m$-tuple $M^*_z$ of $M_z$ \cite[Propositions 19 and 20]{J-L}. In
particular, $H^2(\beta)$ may be realized as a {\it reproducing
kernel Hilbert space} (RKHS) with {\it reproducing kernel} $\kappa :
\sigma_p(T^*) \times \sigma_p(T^*) \rar \mathbb C$ given by
\beq
\label{rkernel} \kappa(z, w) = \sum_{n \in \mathbb
N^m}\overline{w}^n {z^n}/{\beta^2_n}
\quad (z, w \in \sigma_p(T^*)),
\eeq
% where we tacitly assume
assuming that $\sigma_p(M^*_z)$ has non-empty
interior.
%where $\Omega$ is the point-spectrum of $M^*_z$ (\cite{J-L}, Propositions 18 and 19).
Conversely,
as it follows from Theorem~\ref{th:toral} below,
the multiplication $m$-tuple $M_z$ acting in a RKHS
$\mathscr H$ with reproducing kernel $\kappa$ of the form
\eqref{rkernel} is unitarily equivalent to an $m$-variable weighted
shift on $H^2(\beta)$ if all complex polynomials in $z_1,
\cdots, z_m$ are contained in $\mathscr H$.
Notice that the norm in $H^2(\beta)$ has poly-circular symmetry:
$\|f(\zeta \cdot z)\|_\beta=\|f(z)\|_\beta$ for any $f\in H^2(\beta)$
and any $\zeta\in \mathbb T^m$, where
$\zeta \cdot z = (\zeta_1 z_1, \dots,  \zeta_m z_m)$.
So if the largest open set where all series in $H^2(\beta)$
converge is not empty, it is a Reinhardt domain.

% (see Lemma \ref{le2.12} below).

We denote by $\mathbb B_R$ the open ball centered at the origin and of
radius $R > 0$:
$$
\mathbb B_R :=
\{ z= (z_1, \cdots, z_m)
\in \mathbb C^m : \quad \|z\|^2_2=|z_1|^2 + \cdots + |z_m|^2 < R^2\}.$$ The sphere
centered at the origin and of radius $R > 0$ is denoted by $\partial
\mathbb B_R.$ For simplicity, the unit ball $\mathbb B_1$ and the
unit sphere $\partial \mathbb B_1$ are denoted respectively by
$\mathbb B$ and $\partial \mathbb B.$

Let us discuss three basic examples of (spherical) weighted $m$-variable shifts, with which we are primarily concerned.
%In view of the preceding discussion, it suffices to describe the reproducing kernel of the corresponding Hilbert spaces.
\begin{example} \label{ex1.3}
{\rm
For any real number $p>0$, let $\mathscr H_{p}$ be the RKHS of
holomorphic functions on the unit ball $\mathbb B$ with reproducing
kernel
\beqn
\kappa_p(z, w)=\frac{1}{(1- \inp{z}{w})^p}~(z, w \in
\mathbb B).
\eeqn
If $M_{z, p}$ denotes the multiplication tuple on $\mathscr H_p$ then it is unitarily
equivalent to the weighted shift $m$-tuple with weight sequence
\bq
\label{Sz-Be-Dru}
w^{(i)}_{n, p}=\sqrt{\frac{n_i + 1}{|n|+p}}~(n \in \mathbb
N^m, i=1, \cdots, m).
\eq
The RKHS's $\mathscr H_{m}, \mathscr H_{m+1},
\mathscr H_{1}$ are, respectively, the {\it Hardy space} $H^2(\partial \mathbb B)$,
the {\it Bergman space} $A^2(\mathbb B)$, the {\it Drury-Arveson
space} $H^2_m$.
The multiplication tuples $M_{z, m},
M_{z, m+1}, M_{z, 1}$
are commonly known as the {\it
Szeg\"o $m$-shift}, the {\it Bergman $m$-shift}, the
{\it Drury-Arveson $m$-shift} respectively.
The spaces $\mathscr H_p$ have been studied in many papers.
In \cite{VolbWick}, a characterization of
Carleson measures in these spaces has been given.
In \cite{Kapt}, the spaces
$\mathscr F_q=\mathscr H_{1+m+q}$ have been studied;
in particular, a kind of model theorem and von Neumann
inequalities related to these spaces for row contractions
is established there and some K-theory results are proved for
the corresponding Toeplitz algebras. In this work, a scale of
Dirichlet-type spaces corresponding to $q<0$ is also considered, but their definition is
different, and they do not belong to the collection of spaces $\mathscr H_p$.

As it is proved in \cite{At-2}, $M_{z, p}$ is subnormal for any $p\ge m$.
In fact, $M_{z, p}$ is jointly subnormal if and only if $p \ge m$, see the discussion after Theorem \ref{le5.4}.
}
%Notice that $M_{z, m}$ is a joint isometry.
\end{example}

\smallskip

The paper is organized as follows.
% \marginpar{\bf Change-4 \\ New Paragraph}
In the second section, we present various characterizations of
spherical tuples. The main results of this section are Theorem \ref{th:w-sh}, where
we characterize $m$-variable weighted shifts (equivalently,
multiplication $m$-tuples), which are
spherical, and Theorem \ref{th:spher-tpl}, which gives abstract conditions,
when an arbitrary spherical operator $m$-tuple is unitarily equivalent to
a multiplication $m$-tuple.
We also discuss some examples.
%
% of spherical tuples, which illustrate the main results of this section.
%
In Section~3, we describe various spectral parts of spherical
multi-shifts, including the Taylor spectrum. In particular, we obtain
refinements of some results in \cite{G-H-X}. In Section 4, we
provide a sufficient and necessary condition for the Schatten
$p$-class membership of cross-commutators of spherical $m$-shifts.
We deduce that for a noncompact $m$-tuple $M_z$,
if $[M_{z_j}^*, M_{z_k}]\in S_p$ for all $j,k$, then
$p>m$ (which is a manifestation of the cut-off).
Here $[A, B]$ stands for the commutator $AB-BA$ of
operators $A, B$ on a space $\mathcal H$.
The results of Sections~3 and~4 rely heavily on the results of
Section~2. In the last Section~5, we mainly discuss the cut-off
phenomenon for some special classes of spherical multi-shifts, such
as $q$-expansions, $q$-isometries and jointly hyponormal tuples.
%The difficult half
%\marginpar{half? - see my comment in a prev. email}
%of Theorem \ref{sp-bal} is immediate from Theorem \ref{th:w-sh} of next section.

\section{Spherical Tuples}

Let $\mathbb C[z]$ stand for the vector space of analytic
polynomials in $z_1, \cdots, z_m$. We define
\beqn
\Hom(k)=\Big\{p \in \mathbb{C}[z]: p(z)=\sum_{|n|=k}a_kz^n \Big\}.
\eeqn
For a polynomial $p\in \mathbb{C}[z]$ and an integer $k\ge
0$, we denote by $p_{[k]}\in \Hom(k)$ the homogeneous part of
$p$ of degree $k$. More generally, $f_{[k]}$ stands for the
homogeneous part $\sum_{|n|=k}a_kz^n$ of a formal power series
$f(z)=\sum_{n \in \mathbb N^m}a_n z^n.$

Let $\sigma$ denote the normalized surface area measure on the unit sphere
$\partial \mathbb B.$
% \marginpar{\bf Change-5 \\ Lines Inserted}
We often use the short notation $L^2(\partial \mathbb B)$ for the
Hilbert space $L^2(\partial \mathbb B, \sigma)$ of
$\sigma$-square-integrable ``functions" on $\partial \mathbb B.$

The first theorem of this section provides a handy characterization
of spherical multi-shifts. The multi-shifts with weight
multi-sequence given by \eqref{w-i-n} arise naturally in the study
of reproducing $\mathbb C[z_1, \cdots, z_m]$-modules with $\mathcal
U(m)$-invariant kernels, refer to \cite[Section 4]{G-H-X}.
\begin{theorem}
\label{th:w-sh} Let $M_z$ be a bounded multiplication $m$-tuple in
$H^2(\beta).$ Then $M_z$ is spherical if and only if the norm
$\|\cdot\|_{\beta}$ on $H^2(\beta)$ can be expressed as \bq
\label{H2-beta-L2B}
\|f\|^2_{\beta}=\sum_{k=0}^{\infty}{\bbeta}^2_k
\|f_{[k]}\|^2_{L^2(\partial \mathbb B)}~(f \in H^2(\beta)) \eq for
a sequence ${\bbeta_0}, {\bbeta_1}, {\bbeta_2}, \cdots,$
of positive numbers. If this happens then $M_z$ is unitarily
equivalent to the $m$-variable weighted shift $T :
\{{w^{(i)}_n}\}_{ n \in \mathbb N^m}$ with the weight sequence \bq
\label{w-i-n}
w^{(i)}_n=\frac{{\bbeta}_{|n|+1}}{{\bbeta}_{|n|}}
\sqrt{\frac{n_i+1}{|n|+m}}~( n \in {\mathbb N}^m,  1 \leq i \leq m).
\eq
In this case, the sequence $\beta_n = \|z^n\|_{\beta}$ can be expressed as
\bq \label{beta-n}
\beta_n = {\bbeta}_{|n|}
\sqrt{\frac{(m-1)!\, n!}{(m-1+|n|)!}}~ (n \in \mathbb N^m).
\eq

%where $f_{[k]}$ stands for the homogeneous part $\sum_{|n|=k}a_kz^n$ of the formal
%power series $f(z)=\sum_{n \in \mathbb N^m}a_n z^n.$
\end{theorem}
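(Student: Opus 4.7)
The plan is to exploit the fact that the monomials $\{z^n\}_{n \in \mathbb N^m}$ form an orthogonal basis of $H^2(\beta)$ with $\|z^n\|_\beta = \beta_n$, so $H^2(\beta)$ decomposes orthogonally as $\bigoplus_{k \geq 0} \Hom(k)$ with each $\Hom(k)$ finite-dimensional. This reduces the question to identifying $\|\cdot\|_\beta$ on each $\Hom(k)$. Meanwhile, $\mathcal U(m)$ acts on $\Hom(k)$ via the standard representation $\rho_k(U)p(z) := p(Uz)$, which is the $k$-th symmetric power of the defining representation of $\mathcal U(m)$ and is therefore irreducible. By Schur's lemma, a $\mathcal U(m)$-invariant Hermitian inner product on $\Hom(k)$ is unique up to a positive scalar; one such is inherited from $L^2(\partial \mathbb B)$.

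For the necessity, assume $M_z$ is spherical with intertwiners $\Gamma(U)$. Taking adjoints in $\Gamma(U)M_{z_j} = (M_z)_{U,j}\Gamma(U)$ and using that the rows of $U$ span $\mathbb C^m$ (so that $\bigcap_j \ker (M_z)_{U,j}^* = \bigcap_j \ker M_{z_j}^*$), one sees that $\Gamma(U)$ preserves $\bigcap_j \ker M_{z_j}^* = \mathbb C\cdot 1$; hence $\Gamma(U)1 = c(U)$ for some $c(U) \in \mathbb T$. Iterating the intertwining relation yields $\Gamma(U)z^n = c(U)(Uz)^n$, so $\Gamma(U)|_{\Hom(k)} = c(U)\rho_k(U)$. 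Unitarity of $\Gamma(U)$ then forces the restriction of $\|\cdot\|_\beta$ to $\Hom(k)$ to be $\rho_k$-invariant, and by the uniqueness above there is $\bbeta_k > 0$ with $\|p\|_\beta = \bbeta_k\|p\|_{L^2(\partial \mathbb B)}$ for every $p \in \Hom(k)$. Combined with the inter-degree orthogonality already noted, this is precisely \eqref{H2-beta-L2B}.

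For the sufficiency, define $\Gamma(U)f(z) := f(Uz)$ on polynomials. Rotation-invariance of $\sigma$ makes $\rho_k(U)$ isometric on $(\Hom(k), \|\cdot\|_{L^2(\partial \mathbb B)})$, hence also on $(\Hom(k), \|\cdot\|_\beta)$; in view of \eqref{H2-beta-L2B} and density of polynomials, this assembles into a unitary on $H^2(\beta)$, while the intertwining relation $\Gamma(U)M_{z_j}f(z) = (Uz)_j f(Uz) = \sum_k u_{jk}z_k\Gamma(U)f(z)$ is immediate. For the weights, the classical identity $\|z^n\|^2_{L^2(\partial \mathbb B)} = (m-1)!\,n!/(m-1+|n|)!$ together with \eqref{H2-beta-L2B} yields \eqref{beta-n}, and then \eqref{w-beta} gives \eqref{w-i-n}. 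The conceptual heart of the proof is the application of Schur's lemma to the irreducible representations $\rho_k$; the chief technical subtlety is verifying that $\Gamma(U)$ preserves $\bigcap_j \ker M_{z_j}^*$, a step that relies essentially on the invertibility of $U$ rather than merely on the commutation relation.
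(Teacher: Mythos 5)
Your proof is correct and follows essentially the same route as the paper's: both arguments rest on showing that $\Gamma(U)$ fixes $\ker(D_{M^*_z})=\mathbb{C}\cdot 1$ up to a unimodular scalar (a step that, as you note, uses the invertibility of $U$) and on the uniqueness, up to a positive constant, of the $\mathcal U(m)$-invariant inner product on the irreducible module $\Hom(k)$, which is exactly Lemma \ref{lm:irr-repr}. The only structural difference is that the paper channels the ``only if'' direction through the more general Lemma \ref{lm:gen-form} (needed again for Theorem \ref{th:spher-tpl}), and therefore must also establish cross-degree orthogonality via the reducible-representation Lemma \ref{lm:repr}, whereas you obtain that orthogonality for free from the definition of the $H^2(\beta)$ norm and only need the irreducible case of Schur's lemma.
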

\begin{remark}
\label{rk2.2} Whenever
$\{\beta_n\}_{n\in{\mathbb N}^m}$ is a
multi-sequence, which gives rise to a spherical tuple $M_z$,
%  \marginpar{New remark}
we will denote by $\{\bbeta_k\}_{k\in\mathbb N}$
the corresponding scalar weight sequence, related to $\beta$ via
formula \eqref{beta-n}.
\end{remark}

\begin{definition}
\label{df4.1} Let $T : \{w^{(i)}_n\}_{ n \in \mathbb N^m}$ be a
spherical $m$-variable weighted shift and let
$\{\bbeta_k\}_{k\in\mathbb N}$ be the corresponding scalar weight
sequence. Then the {\it shift associated with $T$} is the
one-variable weighted shift $T_{\delta} : \{\delta_{k}\}_{k \in
\mathbb N},$ where
\[
\delta_k :=\frac {{\bbeta}_{k+1}} {{\bbeta}_{k}}, \quad k \in \mathbb N.
\]
\end{definition}

It is easy to see that the following statements are equivalent:
\begin{enumerate}
\item A scalar weight sequence
$\{\bbeta_k\}$ gives rise to a \textit{bounded}
spherical $m$-tuple $M_z$ on $H^2(\beta),$ where $\beta$ is given by \eqref{beta-n};
\item The spherical $m$-variable shift $T : \{w^{(i)}_n\}_{ n \in \mathbb N^m}$
is bounded;
\item $\sup_{k\ge 0} \delta_k <\infty$;
\item The one-variable shift $T_\delta$, associated with $T$, is bounded.
%
%\item The multiplication $m$-tuple $M_z$ on $H^2(\beta)$ is bounded, where $\beta$ is given by \eqref{beta-n}.
%
\end{enumerate}
When dealing with a spherical multiplication $m$-tuple $M_z$ and with
the corresponding $m$-variable weighted shift $T,$ we will always assume
that the condition (3) above holds.

For an $m$-tuple $T$ of commuting bounded linear operators $T_1, \cdots, T_m$ on $\mathcal H,$ let
\[
Q_T(I) := \sum_{j=1}^mT^*_jT_j.
\]
\begin{remark}
\label{rk2.3}
Let $M_z$ be a bounded multiplication $m$-tuple in
$H^2(\beta).$
%and let $Q_{M_z}(I)= M^*_{z_1}M_{z_1} + \cdots +
%M^*_{z_m}M_{z_m}.$
Then $Q_{M_z}(I)=I$ if and only if
%$\{\tilde{\bbeta}_k\}_{k \in \mathbb N}$ is a constant sequence if and only if
$M_z$ is the Szeg\"o $m$-shift.
%(that is, the multiplication $m$-tuple $M_z$ on the Hardy space $H^2(\partial \mathbb B)$).
Further, the defect operator $I-Q_{M^*_z}(I)$ is an orthogonal
projection if and only if $M_z$ is the Drury-Arveson $m$-shift.
%(that is,
%the multiplication $m$-tuple $M_z$ on the Drury-Arveson space $H^2_m$).
\end{remark}

%Let $\mathscr W_m$ denote the class of all $m$-variable weighted
%shifts.
% Let $\mathscr S_m$ and
%$\mathscr B_m$  denote the class of spherical and balanced
%$m$-variable weighted shifts respectively.
%% \marginpar{\bf Change-6 \\ Shifted \\ Rewritten \\ Corollary}
%It is easy to see that $\mathscr
%S_1=\mathscr B_1.$ In higher dimensions, the situation is entirely
%different.
%
%
%\begin{corollary}
%\label{sp-bal}
%If $m \geq 2$ then
%$\mathscr S_m \subsetneq \mathscr B_m.$ Moreover, for all $m \geq
%1,$ if a tuple $T$ belongs to ${\mathscr S}_m$ and
%its spherical Cauchy dual $T^{\mathfrak s}$ is bounded, then
%$T^{\mathfrak s}$ also belongs to ${\mathscr S}_m$.
%
%\end{corollary}
%\begin{proof} Let $T : \{{w^{(i)}_n}\}_{ n \in \mathbb
%N^m}$ be a spherical $m$-variable weighted shift. It is easy to see
%from \eqref{w-i-n} of Theorem \ref{th:w-sh} that the spherical
%Cauchy dual $T^{\mathfrak s}$ of $T$ is again an $m$-variable
%weighted shift with weight multi-sequence
%$$
%\big(w^{\mathfrak s}\big)^{(i)}_n=
%\frac{{\bbeta}_{|n|}}{{\bbeta}_{|n|+1}}
%\, \sqrt{\frac{n_i+1}{|n|+m}}~( 1 \leq i
%  \leq m,\; n \in {\mathbb N}^m).
%$$
%This shows that $T^{\mathfrak s}$ is commuting and spherical. In
%particular, $\mathscr S_m \subseteq \mathscr B_m.$ The fact that
%$\mathscr S_m \neq \mathscr B_m~(m \geq 2)$ is already noted in
%Remark \ref{1.4}.
%\end{proof}
%Although $\beta_n$ is a multi-sequence,
%with a
%little abuse of notation, we write
%$\tilde{\beta}_{k}=\beta_{k}~(k \in \mathbb N)$.

The next result characterizes all multi-shifts within the whole
class of spherical tuples and should be combined with the above
Theorem \ref{th:w-sh}.
Recall that for an $m$-tuple $S=(S_1, \cdots, S_m),$
 $\mbox{ker}(D_{S^*})=\bigcap_{i=1}^{m}\text{ker}(S^*_i).$
\begin{theorem}
\label{th:spher-tpl} Let $T$ be a commuting, bounded spherical
operator $m$-tuple on
a Hilbert space $\mathcal H$. Then the following assertions
are equivalent.

\textbf{(1)} $\ker(D_{T^*})$ is a one-dimensional cyclic subspace
for $T$;

\textbf{(2)} $T$ is unitarily equivalent to an $m$-variable weighted
shift;

\textbf{(3)} $T$ is unitarily equivalent to a multiplication
$m$-tuple $M_z$ on a space $H^2(\beta)$.
\end{theorem}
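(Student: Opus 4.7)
The plan is to verify the circle $(1) \Rightarrow (2) \Rightarrow (3) \Rightarrow (1)$. The implication $(2) \Rightarrow (3)$ is \cite[Proposition 8]{J-L}, already invoked in the introduction. For $(3) \Rightarrow (1)$, the introductory remarks record that on $H^2(\beta)$ the constant power series $1$ spans $\ker(D_{M_z^*})$ and is a cyclic vector for $M_z$ since polynomials are dense in $H^2(\beta)$. The substantive content is therefore $(1) \Rightarrow (2)$.

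For $(1) \Rightarrow (2)$, let $e_0$ be a unit vector spanning $\ker(D_{T^*})$. Taking adjoints in the intertwining identity $\Gamma(U)T_j = (T_U)_j \Gamma(U)$ and using unitarity of $\Gamma(U)$ yields $\Gamma(U)T_j^* = (T_U)_j^* \Gamma(U) = \sum_k \overline{u_{jk}} T_k^* \Gamma(U)$, which shows that $\ker(D_{T^*})$ is $\Gamma(U)$-invariant (invertibility of $U$ being used to invert the resulting linear system in the $T_k^*\Gamma(U)e_0$). Since $\ker(D_{T^*})$ is one-dimensional, $\Gamma(U)e_0 = \chi(U)e_0$ for some $\chi(U)\in\mathbb T$. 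Specialising to the diagonal unitary $U=\mathrm{diag}(\zeta)$ with $\zeta\in\mathbb T^m$ gives $(T_U)^n=\zeta^n T^n$ and hence $\Gamma(U)T^n e_0=\chi(U)\zeta^n T^n e_0$. Comparing $\inp{T^n e_0}{T^p e_0}$ with $\inp{\Gamma(U)T^n e_0}{\Gamma(U)T^p e_0}$ produces $(\zeta^{n-p}-1)\inp{T^n e_0}{T^p e_0}=0$ for every $\zeta\in\mathbb T^m$, which forces orthogonality of the family $\{T^n e_0\}_{n \in \mathbb N^m}$.

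The key step, and the main obstacle, is to show that $T^n e_0 \ne 0$ for every $n\in\mathbb N^m$. Suppose instead that $T^n e_0 = 0$ with $|n|=k$. The identity $\Gamma(U)T^n e_0 = \chi(U)(T_U)^n e_0$ forces $(T_U)^n e_0 = 0$ for every $U\in\mathcal U(m)$. Writing $(T_U)^n = q_U(T)$ with $q_U(z) = (Uz)^n \in \Hom(k)$, the orbit of $z^n$ under $z\mapsto Uz$ spans $\Hom(k)$ because $\Hom(k) \cong S^k(\mathbb C^m)$ is irreducible under $\mathcal U(m)$ (by the unitarian trick, from its irreducibility under $GL(m,\mathbb C)$). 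Hence $p(T)e_0 = 0$ for every $p\in\Hom(k)$, which in turn yields $T^p e_0 = 0$ for all $|p|\ge k$. Cyclicity of $e_0$ then forces $\mathcal H$ to coincide with the finite-dimensional span of $\{T^p e_0 : |p|<k\}$, contradicting the standing assumption that $\mathcal H$ is infinite-dimensional. Once the non-vanishing is secured, $e_n := T^n e_0/\|T^n e_0\|$ is an orthonormal basis of $\mathcal H$ indexed by $\mathbb N^m$, and the computation $T_j e_n = (\|T^{n+\epsilon_j}e_0\|/\|T^n e_0\|)\,e_{n+\epsilon_j}$ exhibits $T$ as an $m$-variable weighted shift with positive weights, completing $(1) \Rightarrow (2)$.
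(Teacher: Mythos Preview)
Your argument is correct. The implications $(2)\Rightarrow(3)\Rightarrow(1)$ match the paper's, and your direct proof of $(1)\Rightarrow(2)$ is sound: the invariance of $\ker(D_{T^*})$ under $\Gamma(U)$, the orthogonality of $\{T^n e_0\}$ via the diagonal torus, and the non-vanishing via irreducibility of $\Hom(k)$ under $\mathcal U(m)$ are all correctly justified.

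Your route, however, differs from the paper's. The paper proves $(1)\Rightarrow(3)$ by way of Lemma~\ref{lm:gen-form}, which uses the full $\mathcal U(m)$-action (through Lemma~\ref{lm:repr} comparing two invariant inner products on $H_N=\bigoplus_{k\le N}\Hom(k)$) to obtain the stronger identity
\[
\|p(T)e_0\|^2=\sum_{k=0}^{\deg p}\bbeta_k^2\,\|p_{[k]}\|^2_{L^2(\partial\mathbb B)},
\]
from which the unitary equivalence with $H^2(\beta)$ (with spherically invariant norm~\eqref{H2-beta-L2B}) is immediate. By contrast, you use only the torus $\mathbb T^m\subset\mathcal U(m)$ to get orthogonality, reserving the full spherical symmetry solely for the non-vanishing step. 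This is a more elementary and self-contained proof of the theorem as stated, and it avoids the representation-theoretic Lemma~\ref{lm:repr} entirely. The trade-off is that your argument does not produce the spherical norm formula~\eqref{H2-beta-L2B}; the paper's detour through Lemma~\ref{lm:gen-form} is what simultaneously yields Theorem~\ref{th:w-sh}, so in the paper's overall architecture that lemma is doing double duty.
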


Before we turn to the proofs of Theorems \ref{th:w-sh} and
\ref{th:spher-tpl}, let us see a couple of instructive examples.
\begin{example}
{\rm Let $M_z$ be a bounded spherical multiplication $m$-tuple on a
space $H^2(\beta)$ and suppose that the ball $B_R$, where all power
series in  $H^2(\beta)$ converge has positive radius (see Theorem
\ref{th:3.1}(2) below for the description of $R=r(M_z)$ in terms of
$\beta_k$'s). Fix an integer $s>0$. Then the set
$$
H^2(\beta)_s=\big\{f\in H^2(\beta): D^\alpha f(0)=0\quad \text{for
all}\; \alpha, \;|\alpha|<s\big\}
$$
is a closed subspace of $H^2(\beta)$, which is invariant under
$M_z$. Let $T$ be the restriction of the $m$-tuple $M_z$ to
$H^2(\beta)_s$. Then $T$ is a spherical $m$-tuple (see Theorem 2.12 below), but the dimension
of $\ker(D_{T^*})$ is greater than one. This gives an example of an
$m$-tuple of operators of multiplication by the co-ordinate
functions $z_1, \dots, z_m$ on a Hilbert space of \textit{scalar}
power series in $z_1, \dots, z_m$, which does not satisfy the
equivalent conditions (1)-(3) of Theorem \ref{th:spher-tpl}. }
\end{example}

\begin{example}
{\rm Here we show that the existence of a cyclic vector for a
commuting spherical operator $m$-tuple $T$ also does not imply the
above conditions (1)-(3). Namely, take any integer
%
%\marginpar{For real $\ell$, I don't know a definition of Sobolev on sphere (it may be my ignorance). For $\ell\in \mathbb Z$, I believe I have a book, which is a good ref. }
%
$\ell>m-\frac 12$.
Consider the Sobolev
% \marginpar{Interchanged \\ $\mathcal H'\leftrightarrow \mathcal H$\\ and other changes}
space
$\mathcal H= W^{\ell,2}(\partial \mathbb{B})$; we refer to
\cite{Hebey} for a definition. We will need
also the dual space
$\mathcal H'= W^{-\ell,2}(\partial \mathbb{B})$;
its elements are complex-valued distributions, defined on the
unit sphere $\partial \mathbb{B}$. Both spaces
are Hilbert, and infinitely differential functions
are dense both in $\mathcal H'$ and in $\mathcal H$.
The pairing between $\mathcal H$ and $\mathcal H'$
is a continuation of the $L^2$ pairing
$\langle f, g\rangle = \int_{\partial \mathbb{B}} f\bar g$, defined
for $C^\infty$ functions.

Let $T$ be the multiplication tuple $M_z$ on $\mathcal H'$.
Since the spaces $\mathcal H$ and $\mathcal H'$
and their norms are invariant under unitary
rotations in ${\mathbb C}^n$, $T$ is a
(strongly) spherical tuple.

It is easy to see that $T$ is not unitarily equivalent to a spherical $m$-variable weighted shift.
Indeed, if $S$ denotes the $m$-tuple of multiplication by $\overline{z}$
on $\mathcal H'$ then $\sum_{j=1}^m T_jS_j = I.$ It follows that
$\mbox{ker}(D_{T^*})=\{0\},$ and
hence $T$ cannot be unitarily equivalent to a weighted shift.

%This follows from the fact that its essential spectrum equals to its spectrum and equals to the
%common support of distributions in $\mathcal H$, which cannot be true for a spherical $m$-variable weighted shift, see Section
%???
%\marginpar{??? To complete}
%below.

Nevertheless, $T$ has a cyclic vector. Indeed, choose any  dense sequence $\{a_{n}\}$ of points on
$\partial \mathbb{B}$ such that their first coordinates $z_1(a_{n})$
are all distinct. The adjoint tuple to $T$ coincides with the multiplication tuple $M_z$, acting on $\mathcal H$.
By the Sobolev embedding theorem, $\mathcal H$ is continuously embedded
into $C(\partial \mathbb{B})$.
Hence for any sequence $\{c_n\}$ in $\ell^1$, the linear functional
$$
\psi(f)\defin \sum_n c_n f(a_{n})
$$
is bounded on $\mathcal H$ and therefore is an element of $\mathcal H'$.
We assert that if the sequence $\{c_n\}$ does not vanish and decays sufficiently fast, then
the vector $\psi\in \mathcal H'$ is cyclic for $M_{z_1}$ and therefore for the whole tuple $T$.

Indeed, suppose that some function $f\in \mathcal H$ satisfies
$$
%\big\langle \phi, (\lambda - M_{z_1})^{-1}f \big\rangle
\psi((\lambda - M_{z_1})^{-1}f)=\sum_n \frac {c_n
f(a_n)}{\lambda-z_1(a_n)}=0
$$
for any $\lambda$ with $|\lambda|>1.$ Suppose
that $c_n\ne 0$ for all $n,$ and $\sum_n n^{-2}\log |c_n|=-\infty$.
Since the points $z_1(a_{n})$ are all distinct,
it follows from a theorem by Sibilev \cite{Sib} that $c_n
f(a_n)=0$ for all $n$, which implies that $f$ is zero. Hence $\psi$
is cyclic for the operator $M_{z_1}$ on $\mathcal H'$.

We remark that a similar construction of a cyclic vector for a family of normal
operators is given in \cite{RossW}.
}
\end{example}

Before proving Theorems \ref{th:w-sh} and \ref{th:spher-tpl}, we need several lemmas.

\begin{lemma}
\label{lm:irr-repr}
Let $L$ be a finite-dimensional Hilbert space and let $\pi :
\mathcal U(m) \rar B(L)$ be an irreducible unitary representation
with respect to two unitary structures defined by scalar
products $\inp{\cdot}{\cdot}_1$ and $\inp{\cdot}{\cdot}_2$ on $L.$
Then there is a constant $\gamma > 0$ such that $$\inp{x}{y}_2=
\gamma \inp{x}{y}_1~(x, y \in L).$$
\end{lemma}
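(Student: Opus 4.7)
The plan is to reduce the statement to Schur's lemma via a standard self-adjoint intertwiner argument. First, I would use the Riesz representation theorem applied to the bilinear form $\langle \cdot, \cdot\rangle_2$ on the finite-dimensional Hilbert space $(L, \langle\cdot,\cdot\rangle_1)$ to produce a unique operator $A \in B(L)$ such that
\[
\langle x, y\rangle_2 = \langle Ax, y\rangle_1 \qquad (x,y \in L).
\]
The fact that $\langle \cdot,\cdot\rangle_2$ is conjugate-symmetric and positive definite forces $A$ to be self-adjoint and strictly positive with respect to $\langle\cdot,\cdot\rangle_1$, hence invertible.

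Next, I would exploit the fact that $\pi$ is unitary for both inner products. Writing $\pi(U)^{*_1}$ for the adjoint with respect to $\langle\cdot,\cdot\rangle_1$, unitarity for $\langle\cdot,\cdot\rangle_1$ yields $\pi(U)^{*_1} = \pi(U)^{-1} = \pi(U^{-1})$ for every $U \in \mathcal{U}(m)$. On the other hand, unitarity for $\langle\cdot,\cdot\rangle_2$ combined with the definition of $A$ gives
\[
\langle A\pi(U) x, \pi(U) y\rangle_1 = \langle \pi(U) x, \pi(U) y\rangle_2 = \langle x, y\rangle_2 = \langle A x, y\rangle_1
\]
for all $x,y \in L$. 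Taking $\langle \cdot,\cdot\rangle_1$-adjoints, this rewrites as $\pi(U)^{*_1} A \pi(U) = A$, and combined with the previous identity we get $A\pi(U) = \pi(U) A$ for every $U \in \mathcal{U}(m)$.

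Finally, since $A$ is self-adjoint on the finite-dimensional space $(L, \langle\cdot,\cdot\rangle_1)$, it admits a spectral decomposition whose eigenspaces are all $\pi$-invariant by the commutation just established. Irreducibility of $\pi$ forces there to be only one such spectral subspace, so $A = \gamma I$ for some scalar $\gamma$; positivity of $A$ gives $\gamma > 0$. Substituting back into the definition of $A$ yields the desired identity $\langle x, y\rangle_2 = \gamma \langle x, y\rangle_1$. There is no serious obstacle here: the only point that requires a moment's care is correctly using the two distinct adjoint operations to deduce the commutation relation before appealing to Schur's lemma.
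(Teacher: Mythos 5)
Your proof is correct and follows essentially the same route as the paper: both produce the positive operator $A$ with $\inp{x}{y}_2=\inp{Ax}{y}_1$ via the Riesz representation theorem and then use irreducibility to force $A$ to be a positive scalar multiple of the identity. The only cosmetic difference is that you establish the full commutation relation $A\pi(U)=\pi(U)A$ and invoke Schur's lemma, whereas the paper shows directly that the eigenspace of the minimal eigenvalue is $\pi$-invariant using the variational identity $\inp{x}{x}_2=\gamma\inp{x}{x}_1 \Leftrightarrow x\in\ker(A-\gamma I)$.
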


\begin{proof} By the Riesz Representation Theorem, there exists a
positive operator $A$ on $L$ such that
$\inp{x}{y}_2=\inp{Ax}{y}_1$ for every $x, y \in L.$ Since $L$ is
finite-dimensional and $A$ is positive, the point-spectrum of $A$
is a non-empty finite subset of $(0,+\infty)$. Let $\gamma$ be the
minimal eigenvalue of $A.$ We claim that $\mbox{ker}(A-\gamma
I)=L.$

Since $A-\gamma I$ is a nonnegative operator, for $x \in L,$ one
has \beq \label{eq (2.6)} \inp{x}{x}_2=\gamma
\inp{x}{x}_1~\mbox{iff}~\inp{(A-\gamma I)x}{x}_1=0~\mbox{iff}~x
\in \mbox{ker}(A-\gamma I).\eeq Let $U \in \mathcal U(m)$ and $x
\in \mbox{ker}(A-\gamma I).$ By assumption, $\pi(U)$ preserves
both scalar products, and hence by (\ref{eq (2.6)}),
$$\inp{\pi(U)x}{\pi(U)x}_2=\inp{x}{x}_2=\gamma \inp{x}{x}_1=\gamma
\inp{\pi(U)x}{\pi(U)x}_1.$$ It follows that $\mbox{ker}(A-\gamma I)$
is invariant under $\pi(U).$ Since $\pi(U)^*=\pi(U^{-1}),$
$\mbox{ker}(A-\gamma I)$ is indeed a reducing subspace for $\pi(U).$
Since $\mbox{ker}(A-\gamma I) \neq \{0\}$ and $\pi$ is irreducible
by assumption, we must have $\mbox{ker}(A-\gamma I)=L.$ Thus the
claim stands verified. The desired conclusion now follows from
(\ref{eq (2.6)}) and the polarization identity.
\end{proof}

We also need an analogue of this lemma for reducible
representations.

\begin{lemma}
\label{lm:repr} Let $L$ be a finite-dimensional Hilbert space and
let $\pi : \mathcal U(m) \rar B(L)$ be an unitary representation
with respect to a unitary structure defined by a scalar product
$\inp{\cdot}{\cdot}_1$. Let $L=L_1\oplus L_2 \oplus \dots \oplus
L_k$ be the corresponding decomposition of $L$ into irreducible
subspaces $L_j$ and suppose these subspaces are of distinct
dimensions. Suppose that we are given another semidefinite
sesquilinear product $\inp{\cdot}{\cdot}_2$ on $L$, which is
invariant with respect to $\pi$: $\inp{\pi(U)x}{\pi(U)y}_2=\inp  x
y_2$ for all $x,y\in L$ and all $U\in \mathcal U(m)$. Then there
are nonnegative constants $\bbeta_1, \dots, \bbeta_k$ such that
the following statements hold:

\textbf{(1)}
$\inp{x}{y}_2= \bbeta_j \inp{x}{y}_1~(x, y \in L_j)$;

\textbf{(2)}
$\inp{x}{y}_2= 0$ if $x\in L_p$, $y\in L_r$, $p\ne r$.

\end{lemma}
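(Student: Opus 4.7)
The plan is to realize the second sesquilinear form as an operator acting in the first inner product, then reduce the lemma to Schur's lemma. The overall strategy parallels the previous lemma, with the additional ingredient of handling the cross terms between distinct irreducible summands.

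First, since $L$ is finite dimensional and $\inp{\cdot}{\cdot}_2$ is Hermitian, the Riesz representation theorem produces a self-adjoint operator $A$ on the Hilbert space $(L,\inp{\cdot}{\cdot}_1)$ with $\inp{x}{y}_2=\inp{Ax}{y}_1$ for all $x,y\in L$. The semidefiniteness of $\inp{\cdot}{\cdot}_2$ forces $A\ge 0$. The invariance hypothesis $\inp{\pi(U)x}{\pi(U)y}_2=\inp{x}{y}_2$, combined with the fact that $\pi(U)$ is unitary with respect to $\inp{\cdot}{\cdot}_1$, yields $\pi(U)^{*}A\pi(U)=A$, i.e.\ $A$ commutes with every $\pi(U)$.

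Next, since the decomposition $L=L_1\oplus\cdots\oplus L_k$ is orthogonal with respect to $\inp{\cdot}{\cdot}_1$ (a standard fact for unitary representations), I would write $A$ as a matrix of blocks $A_{pr}\colon L_r\to L_p$. Each off-diagonal block $A_{pr}$, $p\ne r$, is an intertwiner between the irreducible representations $\pi|_{L_r}$ and $\pi|_{L_p}$. Because the dimensions of $L_1,\dots,L_k$ are distinct, these irreducible summands are pairwise non-isomorphic, so Schur's lemma forces $A_{pr}=0$ for $p\ne r$. Each diagonal block $A_{jj}$ commutes with the irreducible representation $\pi|_{L_j}$, so by the other half of Schur's lemma equals $\bbeta_j I_{L_j}$ for some scalar $\bbeta_j$; positivity of $A$ then gives $\bbeta_j\ge 0$. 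Assertions (1) and (2) can then be read off: for $x,y\in L_j$ one has $\inp{x}{y}_2=\inp{Ax}{y}_1=\bbeta_j\inp{x}{y}_1$, and for $x\in L_p$, $y\in L_r$ with $p\ne r$ one gets $\inp{x}{y}_2=\bbeta_p\inp{x}{y}_1=0$ by $\inp{\cdot}{\cdot}_1$-orthogonality of $L_p$ and $L_r$.

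No step here is genuinely hard; the conceptual core is just Schur's lemma invoked twice. The only place that requires care is the use of the distinct-dimensions hypothesis, which is precisely what guarantees non-isomorphism of the summands and the vanishing of the off-diagonal blocks. Without that hypothesis, repeated isomorphic summands would permit a nontrivial commutant action mixing them, and the clean block-diagonal conclusion in (2) could fail.
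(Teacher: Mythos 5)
Your proof is correct. It follows the same opening move as the paper (Riesz representation to produce a nonnegative operator $A$ with $\inp{x}{y}_2=\inp{Ax}{y}_1$, which the invariance hypothesis places in the commutant of $\pi$), but from there the two arguments diverge in what they invoke. The paper proves assertion (1) by restricting to each $L_j$ and applying Lemma~\ref{lm:irr-repr}; since that lemma requires two genuine unitary structures and $\inp{\cdot}{\cdot}_2$ is only semidefinite, the paper has to patch this by passing to the auxiliary definite product $\inp{x}{y}_1+\inp{x}{y}_2$. Assertion (2) is then obtained by citing a standard orthogonality result for inequivalent irreducible representations. You instead decompose $A$ into blocks $A_{pr}\colon L_r\to L_p$ and apply Schur's lemma twice: the off-diagonal blocks are intertwiners between non-isomorphic irreducibles (here is where the distinct-dimension hypothesis enters, exactly as you say) and hence vanish, while the diagonal blocks are scalars, nonnegative by positivity of $A$. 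This is self-contained, handles the semidefinite case with no extra device since Schur's lemma needs nothing from $A$ beyond its membership in the commutant, and derives both assertions from a single mechanism; the paper's route has the virtue of reusing its already-proved Lemma~\ref{lm:irr-repr} and keeping Schur's lemma implicit. The one point you flag as standard --- that the $L_j$ are pairwise $\inp{\cdot}{\cdot}_1$-orthogonal --- is indeed automatic here, since pairwise non-isomorphic irreducible invariant subspaces of a unitary representation lie in distinct, mutually orthogonal isotypic components.
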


\begin{proof} Similarly to the previous proof,
there is a nonnegative operator $A$ on $L$ such that
$\inp{x}{y}_2=\inp{Ax}{y}_1$ for every $x, y \in L$.
By the assumption, one has a decomposition
$\pi=\pi_1\oplus \pi_2\oplus \dots \oplus \pi_k$, where
$\pi_j: \mathcal U(m) \rar B(L_j)$ are irreducible representations.
We obtain assertion (1) by applying Lemma \ref{lm:irr-repr} to
representations $\pi_j$ (if
the product $\inp{\cdot}{\cdot}_2$
is not definite, one can apply Lemma \ref{lm:irr-repr} to positive definite
products $\inp{\cdot}{\cdot}_1$ and $\inp{x}{y}_3=\inp{x}{y}_1+\inp{x}{y}_2$).
%It follows that spaces $L_j$ are reducing for the matrix $A$, which implies (2).
To see (2), note that $\pi_j$ are all inequivalent representations
and apply \cite[Corollary 2.21]{Se}.
\end{proof}

Next lemma will be crucial in the proof of Theorem \ref{th:spher-tpl}.
\begin{lemma}
\label{lm:gen-form} Let $T$ be a commuting, bounded spherical
operator $m$-tuple on $\mathcal H$. Suppose that $\ker(D_{T^*})$ is
one-dimensional and is spanned by a vector $e\in \mathcal H$.
Suppose that $e$ is cyclic for $T$. Then there is sequence of
positive weights $\{\bbeta_k\}_{k\ge 0}$ such that for any
polynomial $p\in \mathbb{C}[z_1,\dots, z_m]$, \bq \label{nrm-pol}
\|p(T)e\|^2=\sum_{k=0}^{\deg p}
\,\bbeta_k\|p_{[k]}\|^2_{L^2(\partial \mathbb B)} \eq where
$\|p\|^2_{L^2(\partial \mathbb B)}=\int_{\partial \mathbb B}|p(z)|^2
d \sigma(z)$ for the surface area measure $\sigma$ on the unit
sphere $\partial \mathbb B.$ The sequence $\{\bbeta_k\}$ is defined
uniquely.
\end{lemma}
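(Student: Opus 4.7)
My plan is to decompose $\mathcal{H}$ along the circle action induced by $\Gamma$ and then to apply Schur-type rigidity on each homogeneous component. For $k\ge 0$, set $M_k := \mathrm{span}\{T^{\alpha}e : |\alpha|=k\}$; being finite-dimensional, this space is automatically closed. A preliminary observation is that $\Gamma(U)$ preserves $\ker D_{T^*}$ for every $U\in\mathcal U(m)$: from $\Gamma(U)T_j=(T_U)_j\Gamma(U)$ one gets $T_j^*\Gamma(U)^* = \Gamma(U)^*(T_U)_j^*$, and since the matrix $\overline U$ is invertible, $\bigcap_j \ker (T_U)_j^* = \bigcap_j \ker T_j^* = \ker D_{T^*}=\mathbb C e$. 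Therefore $\Gamma(U)e = \lambda(U)e$ for some $\lambda(U)$ with $|\lambda(U)|=1$. Specializing $U=\zeta I$ for $\zeta\in\mathbb T$ yields $\Gamma(\zeta I)T^{\alpha}e = \zeta^{|\alpha|}\lambda(\zeta I)T^{\alpha}e$, so $M_j\perp M_k$ for $j\ne k$ since they lie in distinct eigenspaces of the unitary $\Gamma(\zeta I)$. By cyclicity of $e$, $\mathcal H = \bigoplus_{k\ge 0}M_k$.

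Next I realize $M_k$ as a quotient of an irreducible $\mathcal U(m)$-module. The map $\Phi_k:\mathrm{Hom}(k)\to M_k$, $\Phi_k(p):=p(T)e$, is a surjective linear map. A short computation shows $p(T_U)=q(T)$ where $q(z):=p(Uz)$, and combining this with $\Gamma(U)e=\lambda(U)e$ gives
\[
\Gamma(U)\Phi_k(p) = \lambda(U)\Phi_k(\rho(U)p),\qquad (\rho(U)p)(z):=p(Uz).
\]
Setting $\pi(U):=\rho(U^{-1})$ defines a genuine unitary representation of $\mathcal U(m)$ on $\mathrm{Hom}(k)$ equipped with the $L^2(\partial\mathbb B)$ inner product (by rotation invariance of $\sigma$). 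This representation is the $k$th symmetric power of the standard representation of $\mathcal U(m)$ and is therefore irreducible. Since $|\lambda(U)|=1$ and $\Gamma(U)$ is unitary, the semidefinite sesquilinear form $\langle p,q\rangle_T := \langle \Phi_k(p),\Phi_k(q)\rangle_{\mathcal H}$ on $\mathrm{Hom}(k)$ is $\pi$-invariant.

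The kernel of $\langle\cdot,\cdot\rangle_T$ is a $\pi$-invariant subspace, so by irreducibility it is either $\{0\}$ or all of $\mathrm{Hom}(k)$. In the first case, $\langle\cdot,\cdot\rangle_T$ is positive definite and Lemma \ref{lm:irr-repr} supplies a constant $\bbeta_k>0$ with $\langle p,q\rangle_T = \bbeta_k\langle p,q\rangle_{L^2(\partial\mathbb B)}$ for all $p,q\in \mathrm{Hom}(k)$; in the second case the same identity holds with $\bbeta_k=0$. To rule out $\bbeta_k=0$, note that $\bbeta_k=0$ forces $T^{\alpha}e=0$ for every $|\alpha|=k$, whence $T^{\beta}e=T^{\beta-\alpha}T^{\alpha}e=0$ for every $|\beta|\ge k$; cyclicity of $e$ would then make $\mathcal H$ finite-dimensional, contradicting the standing hypothesis built into Definition 1.1.

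Finally, for any polynomial $p=\sum_{k=0}^{\deg p}p_{[k]}$, the orthogonality of the $M_k$'s yields
\[
\|p(T)e\|^2=\sum_{k=0}^{\deg p}\|\Phi_k(p_{[k]})\|^2=\sum_{k=0}^{\deg p}\bbeta_k\|p_{[k]}\|_{L^2(\partial\mathbb B)}^2,
\]
which is \eqref{nrm-pol}; uniqueness of the weights is immediate on testing with $p(z)=z_1^k$. The delicate point is the Schur-rigidity step together with checking that the group actions on $\mathrm{Hom}(k)$ and on $M_k$ are genuinely intertwined up to the scalar character $\lambda(U)$; positivity of $\bbeta_k$ is where the standing infinite-dimensionality of $\mathcal H$ plays an essential role.
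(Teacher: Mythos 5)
Your proof is correct and follows essentially the same route as the paper's: pull the semidefinite form $\langle p,q\rangle \mapsto \langle p(T)e,q(T)e\rangle$ back to the homogeneous polynomial spaces, use irreducibility of the $\mathcal U(m)$-action on $\Hom(k)$ together with the Schur-rigidity Lemma~\ref{lm:irr-repr}, and rule out vanishing weights via cyclicity and the infinite-dimensionality of $\mathcal H$. The only (harmless) variation is that you obtain the orthogonality of distinct homogeneous components from the eigenspace decomposition of the diagonal unitaries $\Gamma(\zeta I)$, whereas the paper gets it from Lemma~\ref{lm:repr}(2), i.e.\ Schur orthogonality for the inequivalent representations on $\Hom(j)$ and $\Hom(k)$.
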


\begin{proof} % [Proof of Lemma \ref{lm:gen-form}]
Notice first that $\ker D_{T^*}=(T_1 \cH + \dots + T_m\cH)^\perp$ is
invariant under the action of $\mathcal U(m)$. Hence for any $U$ in
$\mathcal U(m)$, there is a scalar constant $\zeta(U)$,
$|\zeta(U)|=1$, such that $\Gamma(U)e=\zeta(U)e$.

Fix a positive integer $N$, and denote by $H_N$ the space of
polynomials in $\mathbb C[z]$ of degree less or equal to $N$. Clearly, $H_N$ is a closed
subspace of $L^2(\partial \mathbb B)$; the corresponding scalar product
will be denoted as $\inp{\cdot }{\cdot }_1$.
Define a second semidefinite sesquilinear product on $H_N$ by
$$
\inp{p}{q}_2=\inp{p(T)e}{q(T)e}_{\mathcal H}.
$$
Both products are invariant under the action of $\mathcal U(m)$. Indeed,
$p(T_U)e=\zeta(U)\Gamma(U)^{-1}p(T)e$ for all $p\in \mathbb C[z]$ and
$U\in \mathcal U(m)$. Hence
\begin{align*}
\inp{p(Uz)}{q(Uz)}_2 & =\inp{p(T_U)e}{q(T_U)e}_{\mathcal H} \\ &=
\inp{\zeta(U)\Gamma(U)^{-1} p(T)e}{\zeta(U) \Gamma(U)^{-1}q(T)e}_{\mathcal H}
=\inp{p}{q}_2
\end{align*}
for all $p,q\in H_N$. It follows from \cite[pg. 175]{Se} that the
decomposition of $(H_N, \inp{\cdot }{\cdot }_1)$ into irreducible
subspaces with respect to the action of $\mathcal U(m)$ on $H_N$ is
given by $H_N=\Hom(0)\oplus \Hom(1) \oplus \dots \oplus
\Hom(N)$.
%%Use: the dimension of $\Hom(n)$ is ${n+m-1}\choose n,$
This fact and Lemma \ref{lm:repr} imply formula \eqref{nrm-pol} for
some nonnegative constants $\bbeta_0, \dots, \bbeta_N$. If a
constant $\bbeta_j$ were zero, it would follow that $p(T)e=0$ for
any homogeneous polynomial $p\in \Hom(j)$, which would imply that
$p(T)e=0$ for all $p\in \Hom(k,0)$ with $k>j$. Since $e$ is cyclic,
this would imply that $\mathcal H$ is finite dimensional, which
gives a contradiction.

Since $N$ is arbitrary, the statement of Lemma follows.
\end{proof}

\begin{proof}[Proof of Theorem \ref{th:w-sh}]
First of all, we mention that $\langle z^n, z^k\rangle_{L^2(\partial
\mathbb B)}=0$ for any distinct multi-indices $n,k\in \mathbb N^m$
see \cite[formula (1.21), page 13]{Z}. So the functions $z^n$, $n\in
\mathbb N^m$ form an orthogonal sequence in $L^2(\partial \mathbb
B)$. It follows that the norm, defined by \eqref{H2-beta-L2B}, is an
$H^2(\beta)$ norm for certain multi-sequence $\beta_n$. It is clear
that the multiplication tuple $M_z$ on the Hilbert space with the
norm \eqref{H2-beta-L2B} is spherical. This gives the ``if'' part of
the first statement.

Conversely, for each multiplication tuple $M_z$, the space
$\ker(D_{M^*_z})$ is one-dimensional and is spanned by the formal
power series $1$. So we can apply Lemma \ref{lm:gen-form} to get the
``only if'' part of the first statement.

Finally, one can make use of \eqref{H2-beta-L2B} and of the formula
\bq \label{norm-B} \int_{\partial \mathbb B}|z^n|^2 d
\sigma(z)=\frac{(m-1)!\, n!}{(m-1+|n|)!}~ (n \in \mathbb N^m), \eq
(see \cite[Lemma 1.11]{Z}) to derive the expressions \eqref{w-i-n}
and \eqref{beta-n} for $w^{(i)}_n$ and $\bbeta_n$ respectively.
\end{proof}

\begin{proof}[Proof of Theorem \ref{th:spher-tpl}]
The equivalence of (2) and (3) has been noted already. If (3) holds,
then $\ker(D_{T^*})$ is one-dimensional and is spanned by the image
in $\mathcal H$ of the formal power series $1$ under the unitary
equivalence. This implies (1). Finally, suppose that (1) holds, and
let $e$ be a unit vector that spans $\ker(D_{T^*})$. Then it follows
from Lemma \ref{lm:gen-form} that there is a sequence $\bbeta_0,
\bbeta_1, \bbeta_2, \dots$ such the map $p\mapsto p(T)e$, $p\in
\mathbb{C}[z]$ extends to a unitary map from $H^2(\beta)$ to
$\mathcal H$, which intertwines $T$ with $M_z$.
\end{proof}

Let $\Lambda\subset \mathbb{Z}^m_+$ be a set of multi-indices. In
what follows, we will say that $\Lambda$ is \textit{inductive} if
for any $n\in \Lambda$, the multi-indices $n+\epsilon_j$ are also in
$\Lambda $ for $j=1, \dots, m$.

\begin{theorem}
\label{th:toral} Let $\Omega$ be a Reinhardt domain in
$\mathbb{C}^m$ such that $0 \in \Omega$.
 Let $\mathscr H$ be a $M_z$-invariant RKHS of functions on
$\Omega$ such that  $\mathscr H\subset
\Hol(\Omega)$, the inclusion being continuous. Let $\kappa(z, w)~(z,
w \in \Omega)$ denote the reproducing kernel of $\mathscr H.$

Then the following statements are equivalent.

\begin{enumerate}

\item For every $\zeta \in \mathbb T^m,$
%$\kappa$ is invariant under the unitary group $\mathcal U(m)$, that is,
\beq \label{t-kernel}
\kappa(\zeta \cdot z, \zeta \cdot w)=\kappa(z, w)~(z, w
\in \Omega),
\eeq
where $\zeta \cdot z = (\zeta_1 z_1, \cdots, \zeta_m z_m) \in \mathbb C^m.$

\item For every $\zeta \in \mathbb T^m,$ $f(\zeta \cdot) \in \mathscr H$ whenever $f \in \mathscr H,$ and
\beqn \inp{f(\zeta \cdot)}{g(\zeta \cdot)}=\inp{f}{g}~(f, g \in \mathscr H).\eeqn

\item
There exist a multi-sequence $\{\beta_n\}_{k\in \mathbb Z_+^m}$ and
 an inductive set $\Lambda \subset \mathbb Z^m_+$ such that
$\mathscr H= H^2(\beta)_\Lambda$,
where
\[
H^2(\beta)_\Lambda = \big\{f\in H^2(\beta): D^n f(0)=0\quad \text{for
all}\; n  \in \mathbb Z^m_+, \; n \notin \Lambda \big\}.
\]
% where the multi-sequence $\beta$ is given by \eqref{beta-n}.

\item
There exists an inductive set $\Lambda' \subset \mathbb Z^m_+$ such that
the functions $z^n$, $n\in \Lambda'$, are contained in $\mathscr H$ and form there
an orthogonal basis.

\item
There exist an inductive set $\La'' \subset \mathbb Z^m_+$
and a family $\{\al_n\}_{n \in \La''}$ of positive numbers such that
\bq
\label{expr-kap}
\kappa(z, w)=\sum_{n \in \La''}\al_n z^n
\overline{w}^n~(z, w \in \Omega).
\eq
\end{enumerate}
Moreover, if (1)--(5) hold, then $\La=\La'=\La''$.
\end{theorem}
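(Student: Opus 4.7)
The plan is to close the loop via the implications $(1) \Leftrightarrow (2)$, $(2) \Rightarrow (4) \Rightarrow (3) \Rightarrow (1)$, together with $(4) \Leftrightarrow (5)$, and then identify the three index sets.

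For $(1) \Leftrightarrow (2)$, I would introduce the rotation operator $U_\zeta f(z) = f(\zeta \cdot z)$, which is well defined on $\mathrm{Hol}(\Omega)$ since $\Omega$ is Reinhardt. Using the reproducing property one computes $U_\zeta^*\kappa_w = \kappa_{\zeta\cdot w}$, at least formally on the dense span of kernels. From $\kappa(z,w)=\langle \kappa_w,\kappa_z\rangle$, condition (1) is exactly the statement that $U_\zeta^*$ preserves inner products between kernel functions, which, by density, is equivalent to $U_\zeta$ being unitary on $\mathscr H$. This is a standard RKHS manipulation and should be routine.

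The hard step is $(2) \Rightarrow (4)$. Given (2), the map $\zeta\mapsto U_\zeta$ is a unitary representation of the compact abelian group $\mathbb T^m$; strong continuity follows from the continuity of $w\mapsto \kappa_w$ and the formula $U_\zeta\kappa_w=\kappa_{\bar\zeta\cdot w}$, combined with density of the span of kernels and the uniform bound $\|U_\zeta\|=1$. Decomposing this representation along the character group $\mathbb Z^m$ via the Fourier projections
\[
P_n f = \int_{\mathbb T^m}\bar\zeta^{\,n}\,U_\zeta f\;dm(\zeta),\qquad f\in\mathscr H,
\]
gives an orthogonal decomposition $\mathscr H = \bigoplus_{n\in\mathbb Z^m} \mathcal H_n$, where each $\mathcal H_n$ is the isotypic component for the character $\zeta\mapsto\zeta^n$. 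Because $\mathscr H\subset \mathrm{Hol}(\Omega)$ and $0\in\Omega$, any $g\in\mathcal H_n$ is a holomorphic function on a Reinhardt neighborhood of $0$ satisfying $g(\zeta\cdot z)=\zeta^n g(z)$; expanding in a Taylor series around $0$ forces $g\in\mathbb C\,z^n$, and in particular $\mathcal H_n=\{0\}$ unless $n\in\mathbb Z_+^m$ and $z^n\in\mathscr H$. Set $\Lambda' := \{n\in\mathbb Z_+^m : z^n\in\mathscr H\}$. Inductivity of $\Lambda'$ follows from $M_z$-invariance: $z^n\in\mathscr H$ implies $z^{n+\epsilon_j}=M_{z_j}z^n\in\mathscr H$. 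The decomposition then gives that $\{z^n\}_{n\in\Lambda'}$ is an orthogonal basis of $\mathscr H$, proving (4).

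The remaining implications are essentially bookkeeping. For $(4)\Rightarrow(3)$, set $\beta_n=\|z^n\|_{\mathscr H}$ for $n\in\Lambda'$ (and, say, $\beta_n=1$ for $n\notin\Lambda'$): the map $z^n\mapsto z^n$ extends to a unitary identification $\mathscr H = H^2(\beta)_{\Lambda'}$. For $(3)\Rightarrow(1)$, the kernel of $H^2(\beta)_\Lambda$ is $\sum_{n\in\Lambda}z^n\bar w^n/\beta_n^2$, which is manifestly invariant under $(z,w)\mapsto(\zeta\cdot z,\zeta\cdot w)$, so (1) holds and the loop closes. For $(4)\Leftrightarrow(5)$, compute the reproducing kernel from the orthogonal basis to obtain $\alpha_n=1/\|z^n\|^2$, and conversely, from a kernel of the form \eqref{expr-kap} deduce that the functions $z^n$ lie in $\mathscr H$ and are mutually orthogonal by matching Taylor coefficients; inductivity of $\Lambda''$ comes from $M_z$-invariance of $\mathscr H$. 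Finally, each of $\Lambda$, $\Lambda'$, $\Lambda''$ coincides with $\{n\in\mathbb Z_+^m : z^n\in \mathscr H\}$, so they are equal. The only non-trivial ingredient in the whole argument is the Fourier decomposition step in $(2)\Rightarrow(4)$, and specifically the verification of strong continuity of $\zeta\mapsto U_\zeta$ and the identification of the isotypic subspaces as at most one-dimensional.
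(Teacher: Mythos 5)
Your proposal is correct and follows essentially the same route as the paper: the equivalence $(1)\Leftrightarrow(2)$ is the paper's Lemma on kernel invariance under a group action, and the key step $(2)\Rightarrow(4)$ is carried out there by exactly your Fourier projection, written as the Bochner integral $a_{n_0}z^{n_0}=(2\pi)^{-m}\int_{[0,2\pi]^m}e^{-in_0t}S(t)f\,dt$ (the paper then checks orthogonality of the monomials by a direct rotation computation rather than invoking isotypic orthogonality, which is the same fact). The remaining implications are organized slightly differently (the paper closes the loop via $(3)\Rightarrow(5)\Rightarrow(1)$), but the content is identical.
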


In (3), in the equality $\mathscr H= H^2(\beta)_\Lambda$ we
identify analytic functions in $\Omega$ with the corresponding formal power series
centered at the origin. This equality means
that these two Hilbert spaces consist of the same functions and the
norms in these two spaces are identical.

\begin{theorem}
\label{th:spherical} Let $\mathscr H$ be a
$M_z$-invariant RKHS of functions on
$\mathbb B_R$ in $\mathbb C^m$. Suppose $\mathscr H\subset
\Hol(\mathbb B_R)$, the inclusion being
continuous. Let $\kappa(z, w)~(z, w \in \mathbb B_R)$ denote the
reproducing kernel of $\mathscr H.$

Then the following statements are equivalent.

\begin{enumerate}

\item For every $U \in \mathcal U(m),$
%$\kappa$ is invariant under the unitary group $\mathcal U(m)$, that is,
\beq \label{s-kernel} \kappa(Uz, Uw)=\kappa(z, w)~(z, w
\in \mathbb B_R). \eeq
\item For every $U \in \mathcal U(m),$ $f(U \cdot) \in \mathscr H$ whenever $f \in \mathscr H,$ and
\beqn \inp{f(U \cdot)}{g(U \cdot)}=\inp{f}{g}~(f, g \in \mathscr H).\eeqn

\item
There exist $s \in \mathbb Z_+$ and a
scalar sequence $\{\bbeta_k\}_{k\in \mathbb N}$ such that
$\mathscr H= H^2(\beta)_s$, where
the multi-sequence $\beta$ is given by \eqref{beta-n} and
\[
H^2(\beta)_s=\big\{f\in H^2(\beta): D^n f(0)=0\quad \text{for
all}\; n\in \mathbb Z^m_+, \;|n|<s\big\}.
\]
\end{enumerate}
If any of the conditions (1) -- (3) holds, then
$M_z$ is a strongly spherical tuple.

\end{theorem}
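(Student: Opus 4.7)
The plan is to mirror the proof of Theorem \ref{th:toral}, replacing the toral group $\mathbb{T}^m$ by the full unitary group $\mathcal{U}(m)$, and to use the machinery of Theorem \ref{th:w-sh} and Lemmas \ref{lm:irr-repr}--\ref{lm:gen-form} to pin down the precise shape of the norm on $\mathscr{H}$.

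For $(1)\Leftrightarrow (2)$ I would repeat the standard reproducing-kernel argument: the identity $\kappa(Uz,Uw)=\kappa(z,w)$ is equivalent to saying that composition with $U^{-1}$ maps each $\kappa(\cdot,w)$ isometrically onto $\kappa(\cdot,Uw)$, and this extends by density to the whole of $\mathscr{H}$; the reverse direction is the usual kernel-evaluation computation. For $(2)\Rightarrow(3)$, since $\mathbb{T}^m\subset \mathcal{U}(m)$, Theorem \ref{th:toral} already yields $\mathscr{H}=H^2(\beta)_\Lambda$ for some multi-sequence $\{\beta_n\}$ and some inductive set $\Lambda$. I then want to upgrade this to $\mathscr{H}=H^2(\beta)_s$ with $\beta$ obeying \eqref{beta-n}. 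The key observation is that $\mathscr{H}\cap\Hom(k)$ is $\mathcal{U}(m)$-invariant, and the natural action of $\mathcal{U}(m)$ on $\Hom(k)$ is irreducible (see \cite[p.~175]{Se}), so this intersection is either $\{0\}$ or all of $\Hom(k)$; combined with $M_z$-invariance this forces the set $\{k:\Hom(k)\subset\mathscr{H}\}$ to be of the form $\{k\geq s\}$ and hence $\Lambda=\{n:|n|\geq s\}$. Applying Lemma \ref{lm:irr-repr} to the two $\mathcal{U}(m)$-invariant scalar products $\inp{\cdot}{\cdot}_{\mathscr H}$ and $\inp{\cdot}{\cdot}_{L^2(\partial \mathbb B)}$ on each $\Hom(k)$ produces constants $\bbeta_k^2>0$ with $\|p\|_{\mathscr H}=\bbeta_k\|p\|_{L^2(\partial\mathbb B)}$ for $p\in\Hom(k)$. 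Orthogonality of distinct $\Hom(k)$'s in $\mathscr H$ is already part of the toral conclusion, so substituting back into \eqref{H2-beta-L2B} of Theorem \ref{th:w-sh} yields precisely the description of $\beta_n$ in \eqref{beta-n} and completes (3).

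The implication $(3)\Rightarrow(1)$ is direct: the reproducing kernel of $H^2(\beta)_s$ with $\beta$ of the form \eqref{beta-n} computes, via the multinomial identity $\sum_{|n|=k}\frac{z^n\bar w^n}{n!}=\frac{\inp{z}{w}^k}{k!}$, to $\kappa(z,w)=\sum_{k\geq s}\bbeta_k^{-2}\binom{m+k-1}{k}\inp{z}{w}^k$, which depends only on $\inp{z}{w}$ and is therefore $\mathcal{U}(m)$-invariant. For the final strongly spherical assertion, the rotation action $\Gamma(U):f\mapsto f\circ U^{-1}$ is unitary on $\mathscr H$ by (2) and multiplicative in $U$ by construction; strong continuity reduces to checking $\Gamma(U)f\to f$ as $U\to I$, which is immediate on each finite-dimensional $\Hom(k)$ (where the $\mathscr H$- and $L^2(\partial\mathbb B)$-norms are proportional) and extends to all of $\mathscr H=H^2(\beta)_s$ by the uniform bound $\|\Gamma(U)\|=1$ and density of polynomials.

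The main obstacle in this programme is the shape of $\Lambda$ inside $(2)\Rightarrow (3)$: once the irreducibility of $\mathcal{U}(m)$ on $\Hom(k)$ is combined with the inductive property of $\Lambda$ to identify $\Lambda=\{|n|\geq s\}$, everything else is an assembly of pieces already available in Theorem \ref{th:w-sh}, Theorem \ref{th:toral}, and Lemma \ref{lm:irr-repr}.
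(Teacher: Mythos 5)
Your proposal is correct and follows essentially the same route as the paper: reduce to Theorem \ref{th:toral} via the toral subgroup, use the irreducibility of the $\mathcal U(m)$-action on $\Hom(k)$ together with inductivity/$M_z$-invariance to force $\Lambda=\{|n|\ge s\}$, and invoke Lemma \ref{lm:irr-repr} to get the proportionality constants $\bbeta_k$ on each homogeneous component. The minor variations (verifying $(3)\Rightarrow(1)$ by computing the kernel rather than $(3)\Rightarrow(2)$, and checking strong continuity on the finite-dimensional pieces rather than via norm-continuity of $w\mapsto\kappa(\cdot,w)$) are equally valid.
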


% \marginpar{Remark 2.15 changed. Please \\ revise carefully}
\begin{remark}
Some statements  close to the above Theorem~\ref{th:spherical} are given in the beginning of Section~4 of
the paper \cite{G-H-X} by Guo, Hu and Xu, though they do not discuss the continuity of the representations $\Gamma$.
As follows from their discussion, the spaces $H^2(\beta)_s$ are defined uniquely by their
generating function $F(t)$, analytic in the disc $|t|<  R^2 $ in the complex plane, such that
\[
\kappa(z,w)=F\big(\inp z w\big)~(z,w\in \mathbb B_R).
\]
Such representation always exists, all the coefficients $a_n$ in the expansion
$F(t)=\sum_{k=s}^\infty a_k t^k$ are positive and are given by
\bq
\label{a-k}
a_k = \frac {(m-1+k)!} {(m-1)! \,k!}\frac{1}{\bbeta^2_k},~k\ge s
\eq
(it follows from \eqref{rkernel} and \eqref{beta-n}).
%Suppose $\kappa$ has a diagonal representation \eqref{expr-kap}, where $\alpha_n$
%are some complex numbers.
%Then $\kappa$ satisfies \eqref{t-kernel}, and
%the above proof of Theorem~\ref{th:toral}
%implies that in fact, all coefficients $\alpha_n$ are nonnegative.
\end{remark}

\begin{lemma}
\label{le2.11} Let $G$ be a subgroup of the group $GL_m(\mathbb C)$
of invertible, complex $m \times m$ matrices and let $\Omega$ be a
$G$-invariant (that is, $\mathfrak g z \in \Omega$ whenever
$\mathfrak g \in G$ and $z \in \Omega$) domain in $\mathbb{C}^m$
such that $0 \in \Omega.$ Let $\mathscr H$ be a $M_z$-invariant RKHS
of functions on $\Omega$ such that  $\mathscr H \subset
\Hol(\Omega)$, the inclusion being continuous. Let $\kappa(z, w)~(z,
w \in \Omega)$ denote the reproducing kernel of $\mathscr H.$
%Let $\mathscr H$ be a RKHS of holomorphic functions on $\Omega$ with
%reproducing kernel $\kappa(z, w)~(z, w \in \Omega).$
Let $M_z=(M_{z_1}, \cdots, M_{z_m})$ be the bounded $m$-tuple of
multiplication by the co-ordinate functions $z_1, \cdots, z_m.$ Then
the following statements are equivalent:
\begin{enumerate}

\item For every $\mathfrak g \in G,$
%$\kappa$ is invariant under the unitary group $\mathcal U(m)$, that is,
\beq \label{2.9} \kappa(\mathfrak g z, \mathfrak g w)=\kappa(z,
w)~(z, w \in \Omega). \eeq

\item For every $\mathfrak g \in G,$ $f(\mathfrak g \cdot) \in \mathscr H$ whenever $f \in \mathscr H,$ and
\beqn
\inp{f(\mathfrak g \cdot)}{h(\mathfrak g \cdot)}=\inp{f}{h}~(f, h \in \mathscr H).
\eeqn
\end{enumerate}
If this happens then the representation $\Gamma : G \rar
B(\mathscr H)$ of $G$ on $\mathscr H$ given by
\beq
\Gamma(\mathfrak
g)f(z)=f(\mathfrak g z)~(z \in \Omega, \mathfrak g \in G)
\eeq
is strongly continuous, unitary and
satisfies $\Gamma(\mathfrak g)M_{z_j}=M_{(\mathfrak g
z)_j}\Gamma(\mathfrak g)~(j=1, \cdots, m).$
In particular, if $G=\mathcal{U}(m)$, then $M_z$ is strongly spherical.
\end{lemma}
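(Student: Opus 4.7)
The plan is to prove the equivalence of (1) and (2) by working on the dense set of kernel functions, then to derive the intertwining formula directly, and finally to establish strong continuity of $\Gamma$ by exploiting the continuity of $v\mapsto \kappa(\cdot,v)$ in the RKHS norm together with the fact that each $\Gamma(\mathfrak g)$ is an isometry.

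For (1) $\Rightarrow$ (2): I would start by fixing $\mathfrak g\in G$ and computing the natural candidate for $\Gamma(\mathfrak g)$ on kernel functions. Substituting $z\mapsto \mathfrak g^{-1}z$ in \eqref{2.9} gives $\kappa(\mathfrak g^{-1}z,w)=\kappa(z,\mathfrak g w)$, so the map $V_{\mathfrak g}:\kappa(\cdot,w)\mapsto \kappa(\cdot,\mathfrak g^{-1}w)$ sends kernel functions into $\mathscr H$ (here $G$-invariance of $\Omega$ is used). A short computation using $\langle\kappa(\cdot,w_1),\kappa(\cdot,w_2)\rangle = \kappa(w_2,w_1)$ and \eqref{2.9} shows that $V_{\mathfrak g}$ preserves inner products on the linear span of kernel functions, which is dense in $\mathscr H$. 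Hence $V_{\mathfrak g}$ extends uniquely to an isometry on $\mathscr H$, and since $V_{\mathfrak g^{-1}}$ is a two-sided inverse it is unitary. It remains to check that $V_{\mathfrak g}f$ coincides with $f(\mathfrak g\,\cdot)$ for every $f\in\mathscr H$: this follows since the identity $(V_{\mathfrak g}f)(z)=f(\mathfrak g z)$ holds for $f=\kappa(\cdot,w)$, and both sides are continuous in $f\in\mathscr H$ (the right-hand side because of the continuity of the embedding $\mathscr H\hookrightarrow\Hol(\Omega)$, the left by continuity of point evaluation in $\mathscr H$). This yields (2) and simultaneously identifies $V_{\mathfrak g}=\Gamma(\mathfrak g)$.

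For (2) $\Rightarrow$ (1): If $\Gamma(\mathfrak g)$ is unitary, then for each $w\in\Omega$ one has $\langle f,\Gamma(\mathfrak g)^*\kappa(\cdot,w)\rangle=(\Gamma(\mathfrak g)f)(w)=f(\mathfrak g w)=\langle f,\kappa(\cdot,\mathfrak g w)\rangle$, so $\Gamma(\mathfrak g)^*\kappa(\cdot,w)=\kappa(\cdot,\mathfrak g w)$. Applying $\Gamma(\mathfrak g)$ gives $\kappa(\mathfrak g z,\mathfrak g w)=\kappa(z,w)$. The intertwining identity is immediate from the chain rule of evaluation: $(\Gamma(\mathfrak g)M_{z_j}f)(z)=(\mathfrak g z)_j f(\mathfrak g z)=(M_{(\mathfrak g z)_j}\Gamma(\mathfrak g)f)(z)$; that $\Gamma$ is a group homomorphism follows from the identity $f(\mathfrak g_1\mathfrak g_2 z)=((\Gamma(\mathfrak g_2)f)\circ \mathfrak g_1)(z)$ on the dense set of kernel functions and extension by continuity.

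The main technical point is strong continuity of $\Gamma$. Since each $\Gamma(\mathfrak g)$ is unitary, the family is uniformly bounded, so it suffices to prove strong continuity on the dense set of kernel functions. I would verify that the map $v\mapsto \kappa(\cdot,v)$ from $\Omega$ to $\mathscr H$ is norm-continuous by expanding
\[
\|\kappa(\cdot,v_n)-\kappa(\cdot,v_0)\|^2=\kappa(v_n,v_n)-\kappa(v_n,v_0)-\kappa(v_0,v_n)+\kappa(v_0,v_0)
\]
and observing that $\kappa$ is separately continuous (holomorphic in the first argument because $\kappa(\cdot,w)\in\mathscr H\subset\Hol(\Omega)$, antiholomorphic in the second by conjugate-symmetry). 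Since $\mathfrak g\mapsto\mathfrak g^{-1}w$ is continuous on $G$, the composition $\mathfrak g\mapsto\Gamma(\mathfrak g)\kappa(\cdot,w)=\kappa(\cdot,\mathfrak g^{-1}w)$ is norm-continuous, which together with the uniform bound $\|\Gamma(\mathfrak g)\|=1$ gives strong continuity on all of $\mathscr H$ by a standard $3\epsilon$ argument. Finally, the case $G=\mathcal U(m)$ is immediate: the intertwining relation becomes exactly $\Gamma(U)M_{z_j}=\sum_k u_{jk}M_{z_k}\Gamma(U)=(M_z)_{U,j}\Gamma(U)$, so $M_z$ is strongly spherical. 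I expect the continuity of the kernel as a function of its second argument in the RKHS norm to be the only step requiring care; everything else is a formal manipulation on kernel functions.
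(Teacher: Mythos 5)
Your argument follows the paper's own route almost step for step: define $\Gamma(\mathfrak g)$ on kernel functions by $\kappa(\cdot,w)\mapsto\kappa(\cdot,\mathfrak g^{-1}w)$, extend by density to a unitary, identify it with $f\mapsto f(\mathfrak g\,\cdot)$, read off the intertwining relation pointwise, and reduce strong continuity to norm continuity of $w\mapsto\kappa(\cdot,w)$. All of that is fine.

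There is, however, one genuine gap, and it sits exactly at the point you yourself flagged as "the only step requiring care." You expand
\[
\|\kappa(\cdot,v_n)-\kappa(\cdot,v_0)\|^2=\kappa(v_n,v_n)-\kappa(v_n,v_0)-\kappa(v_0,v_n)+\kappa(v_0,v_0)
\]
and claim the right-hand side tends to $0$ because $\kappa$ is \emph{separately} continuous. Separate continuity handles the two cross terms $\kappa(v_n,v_0)$ and $\kappa(v_0,v_n)$, but it does not control the diagonal term: a separately continuous function of two variables need not be continuous along the diagonal (the standard example $xy/(x^2+y^2)$ shows this), so $\kappa(v_n,v_n)\to\kappa(v_0,v_0)$ does not follow from what you have written. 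You need \emph{joint} continuity of $\kappa$, and in this analytic setting it is available for free: $\kappa(z,\overline w)$ is holomorphic in $z$ for fixed $w$ and holomorphic in $w$ for fixed $z$, so by Hartogs' separate analyticity theorem it is jointly holomorphic, hence jointly continuous; this is precisely the step the paper inserts before the norm identity. With that one citation added, your proof is complete and coincides with the paper's. (A minor remark, shared with the paper's own statement: with the convention $\Gamma(\mathfrak g)f=f(\mathfrak g\,\cdot)$ the map $\Gamma$ is an anti-homomorphism of $G$ rather than a homomorphism; this is a cosmetic convention issue and does not affect unitarity, strong continuity, or the sphericality conclusion.)
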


\begin{proof}
%[Proof ot Lemma \ref{le2.11}]
(1) implies (2):
Suppose that (1) holds. Set \beqn \Gamma(\mathfrak g)\kappa(\cdot, w)= \kappa(\cdot,
\mathfrak g^{-1}(w))~(w \in \Omega, \mathfrak g \in G). \eeqn
We check that $\Gamma$ extends to a unitary representation of $G$
on $\mathscr H.$ By the reproducing property of $\kappa$ and \eqref{2.9}, \beqn \inp{\Gamma(\mathfrak g)\kappa(\cdot,
z)}{\Gamma(\mathfrak g)\kappa(\cdot, w)}
%&=& \inp{\kappa(\cdot,
%\mathfrak g^{-1}(z))}{\kappa(\cdot, \mathfrak g^{-1}(w))} \\
= \inp{\kappa(\cdot,
z)}{\kappa(\cdot, w)}. \eeqn
Since $\bigvee\{\kappa(\cdot, w): w \in \Omega
\}=\mathscr H,$ $\Gamma(\mathfrak g)$ extends isometrically to the entire
$\mathscr H.$ Since $\mathfrak g (\Omega)=\Omega,$ $\Gamma(\mathfrak g)$ is
surjective, and hence unitary. Finally, since
$\Gamma(\mathfrak g)^*=\Gamma(\mathfrak g^{-1}),$ it follows that
\beqn
\Gamma(\mathfrak g)f(z) =
\inp{\Gamma(\mathfrak g)f}{\kappa(\cdot, z)}
= \inp{f}{\Gamma(\mathfrak g^{-1})\kappa(\cdot, z)}
= f(\mathfrak g z)
\eeqn
for any $z \in \Omega$ and any $f \in
\mathscr H.$

(2) implies (1): Assume that (2) is true. By the uniqueness of the
reproducing kernel, it suffices to check that $\kappa(\mathfrak g z,
\mathfrak g w)$ is a reproducing kernel for $\mathscr H$ for every
$\mathfrak g \in G.$ However,
\[
\inp{f}{\kappa(\mathfrak g \cdot,
\mathfrak g w)}=\inp{f(\mathfrak g^{-1}\cdot)}{\kappa(\cdot,
\mathfrak g w)}= f(w)~(w \in \Omega),
\]
which gives (1).

The fact that $\Gamma$ is a unitary representation of $G$ on
$\mathscr H$ follows from (2).
It follows from the closed graph theorem that
the operators $M_{z_j}$ are bounded.
Notice that by Hartogs' separate analyticity theorem \cite{Kr},
$\kappa(z, \overline{w})$ is
holomorphic in $z,w$, and it follows that
the map $w \rightarrow \kappa(w, w)$ is continuous.
Since
$\|\kappa(\cdot, w)-\kappa(\cdot, w_0)\|^2=
\kappa(w, w)+ \kappa(w_0, w_0) - 2\operatorname{Re}\kappa(w, w_0)$,
the function $w\mapsto \kappa(\cdot, w)\in \mathcal H$ is norm continuous.
Therefore $\Gamma (\mathfrak g) \kappa(\cdot, w)$ depends continuously on
$\mathfrak g$ for any $w$. Since the reproducing kernels are complete,
$\Gamma$ is strongly continuous. The remaining part is a routine
verification.
%This completes the proof of the lemma.
\end{proof}
\begin{remark} We are particularly interested in the
subgroups $\mathcal U \mathcal D(m)$ and $\mathcal U(m)$ of
$GL_m(\mathbb C),$ where $\mathcal U \mathcal D(m)$ denotes the
subgroup of unitary diagonal $m \times m$ matrices.
\end{remark}

% \marginpar{I deleted the former Lemma 2.14 \\ -- now unnecessary. }

\begin{proof}[Proof of Theorem \ref{th:toral}]
By Lemma \ref{le2.11}, (1) and (2) are equivalent.
It is clear that (3) and (4) are equivalent,
and the corresponding sets $\La$ and $\La'$ coincide
whenever (3) and (4) hold.
It is also clear that (3) implies (2).

(2) implies (4). Assume that (2) holds.
Define the set $\La'\subset \mathbb N^m$ by
\[
\La'=\big\{
n_0\in \mathbb N^m: \exists f=\sum a_n z^n \in \mathscr H:
a_{n_0}\ne 0
\big\}.
\]
Since $\mathscr H$ is $M_z$-invariant, $\La'$ is inductive.

Put $S(t)f(z)=f(e^{it}z)$, $t\in \mathbb R^m$,
where $e^{it}z=(e^{it_1}z_1, \dots, e^{it_m}z_m)$.
By applying Lemma~\ref{le2.11}, we get that $S$
is a unitary strongly continuous $m$-parameter group.
Given any function $f(z)=\sum a_n z^n \in \mathscr H$
and any $n_0\in\mathbb N^m$ such that $a_{n_0}\ne 0$,
we notice that
\[
a_{n_0}z^{n_0}=\frac 1{(2\pi)^m}\, \int_{[0,2\pi]^m} e^{-in_0t} S(t) f\, dt.
\]
(The integral is understood in the Bochner sense. The equality is true because it holds
pointwise for any $z\in \Omega$.)
It follows that for any $n_0\in \La'$, $z^{n_0}\in \mathscr H$.

%
% OLD ARGUMENT
%
%This formula defines a unitary $m$-parameter group
%By taking scalar product with reproducing kernels we get that it is weakly continuous. By
%~\cite[Theorem 10.8.1]{HillePhil}, it is strongly continuous.
%Given any function $f(z)=\sum a_n z^n \in \mathscr H$
%and any $n_0\in\mathbb N^m$ such that $a_{n_0}\ne 0$,
%we notice that
%\[
%a_{n_0}z^{n_0}=\frac 1{(2\pi)^m}\, \int_{[0,2\pi]^m} e^{-in_0t} S(t) f\, dt.
%\]
%(We understand it as an equality in $\mathscr H$. The integral converges in $\mathscr H$.
%The equality is true because it holds pointwise for any $z\in \Omega$.)
%It follows that for any $n_0\in \La'$, $z^{n_0}\in \mathscr H$.

Now take any $p, q \in \mathbb N^m$ such that $p \neq q.$ Then for some $1
\leq j \leq m,$ $p_j \neq q_j.$
Let $\zeta=w \varepsilon_j + \sum_{i \neq j} \epsilon_i$, where
$w \in \mathbb T$.  Then $\inp{z^p}{z^q}=\inp{\zeta
z^p}{\zeta z^q}=w^{p_j - q_j}\inp{z^p}{z^q},$ which is
possible for all $w \in \mathbb T$ only if $\inp{z^p}{z^q}=0$. We have checked
that the functions $z^n$, $n\in \Lambda'$ form an orthogonal sequence in
$\mathscr H$.
Any $f\in \mathscr H$ has a Taylor series representation $f(z)=\sum_{n\in \La'} a_n z^n$, which
converges weakly in $\mathscr H$. Therefore the sequence
$\{z^n\}_{n\in \Lambda'}$ is in fact an orthogonal basis in $\mathscr H$.

%For any $f\in \mathscr H$, its Taylor series representation $f(z)=\sum_{n\in \La'} a_n z^n$
%is at the same time its orthogonal series decomposition with respect to the orthogonal sequence
%$\{z^n\}_{n\in \Lambda'}$, which implies that this sequence is in fact an orthogonal basis.
%

Given any orthonormal basis $\{\phi_k\}_{k\in \mathscr K}$ in $\mathscr H$,
the reproducing kernel of $\mathscr H$ can be expressed by the well-known formula
$\kappa(z,w)=\sum_{k\in \mathscr K} \overline{\phi_k(w)}\phi_k(z)$.
It follows that (3) implies (5) (with $\La''=\La$).

It is immediate that (5) implies (1), which concludes the proof of the fact that conditions (1)--(5) are
all equivalent. It also has been shown already that if (1)--(5) are fulfilled,
then $\La=\La'=\La''$.
\end{proof}

\begin{proof}[Proof of Theorem \ref{th:spherical}]
By Lemma \ref{le2.11}, (1) is equivalent to (2).
It is clear that (3) implies (2).
It remains to prove that (2) implies (3).
Suppose that (2) holds.
Then we can apply Theorem \ref{th:toral} and deduce that
$\mathscr H=H^2(\be)_\La$ for an inductive set $\La$.
Let $s=\min\{|n|: n\in \La\}$, then the intersection
$R$ of $\mathscr H$ with the space $\Hom(s)$ of analytic
homogeneous polynomials of order $s$ is non-zero, and the
group $\mathcal U(m)$ acts on $R$. Since the action of
$\mathcal U(m)$ on $\Hom(s)$ is irreducible (we already have
used it in Lemma~\ref{lm:gen-form}), it follows that $R=\Hom(s)$. Since
$\La$ is inductive, $\La=\{n\in \mathbb N^m: |n|\ge s\}$,
which gives (3).

By Lemma \ref{le2.11},
if any of the equivalent conditions (1)--(3) holds, then the tuple $M_z$
consists of bounded operators. Now (3) implies that
$M_z$ is strongly spherical.
\end{proof}

\section{Spectral Theory for Multi-shifts}

For a masterful exposition of various notions of invertibility,
Fredholmness and multi-parameter spectral theory, the reader is
referred to \cite{Cu}. For $T \in B(\mathcal H),$ we reserve the
symbols $\sigma(T), \sigma_p(T)$, $\sigma_{ap}(T), \sigma_{e}(T)$ for
the Taylor spectrum, point-spectrum, approximate-point spectrum, essential
spectrum of $T$ respectively. It is well known that the spectral
mapping theorem for polynomial mappings holds for both the Taylor
and the approximate-point spectra. Except the point-spectrum, all spectra mentioned above are
always non-empty.

Given a commuting $m$-tuple $T = (T_1, \cdots, T_m)$ of operators on
${\mathcal H},$  set
\bq
\label{QT}
Q_T(X) \mathrel{\mathop:}= \sum_{i=1}^mT^*_iXT_i~(X \in B(\mathcal H)).
\eq
We define inductively
$Q^0_T(I)=I$ and
$Q^k_T(I)=Q_T\big(Q^{k-1}_T(I)\big)$ for $k\ge 1$.
Then we have
\bq
\label{QkT}
Q^k_T(I)=\sum_{|\alpha|=k}\,\frac{k!}{\alpha!} \, {T^*}^{\alpha}T^{\alpha}.
\eq

\begin{lemma}
Let $T$ be a spherical commuting, bounded $m$-variable weighted
shift with respect to an orthonormal basis $\{e_n\}_{n \in \mathbb
N^m}.$ Let $T_{\delta} : \{\delta_{k}\}_{k \in \mathbb N}$ be the
(one-variable) shift associated with $T$ with respect to an
orthonormal basis $\{f_k\}_{k \in \mathbb N}.$ Then
\beq
\label{id}
\inp{Q^k_T(I)e_n}{e_n}=\|T^k_{\delta}f_{|n|}\|^2~ (k \in \mathbb N,
~n \in \mathbb N^m).
\eeq
\end{lemma}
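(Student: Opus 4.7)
The plan is to unpack both sides of \eqref{id} in terms of the weight sequences $\{\beta_n\}$ and $\{\bbeta_k\}$, and then reduce the resulting equality to a standard combinatorial identity.

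First I would compute the right-hand side. Since $T_\delta f_j=\delta_j f_{j+1}$ with $\delta_j=\bbeta_{j+1}/\bbeta_j$, iterating gives $\|T_\delta^k f_{|n|}\|^2=\bbeta^2_{|n|+k}/\bbeta^2_{|n|}$. For the left-hand side I would use the explicit expansion \eqref{QkT}; since $T^\alpha e_n=(\beta_{n+\alpha}/\beta_n)\, e_{n+\alpha}$ (via the unitary equivalence with $M_z$ on $H^2(\beta)$, where $e_n=z^n/\beta_n$), the operators $T^{*\alpha}T^\alpha$ are diagonal in the basis $\{e_n\}$ and
\[
\inp{Q^k_T(I)e_n}{e_n}=\sum_{|\alpha|=k}\frac{k!}{\alpha!}\,\frac{\beta^2_{n+\alpha}}{\beta^2_n}.
\]

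Next I would insert the spherical formula \eqref{beta-n}. Since $|n+\alpha|=|n|+k$, the factor $\bbeta^2_{|n|+k}/\bbeta^2_{|n|}$ comes out of the sum, leaving
\[
\inp{Q^k_T(I)e_n}{e_n}=\frac{\bbeta^2_{|n|+k}}{\bbeta^2_{|n|}}\cdot\frac{(m-1+|n|)!}{(m-1+|n|+k)!}\sum_{|\alpha|=k}\frac{k!\,(n+\alpha)!}{\alpha!\,n!}.
\]
Observing that $\frac{(n+\alpha)!}{\alpha!\,n!}=\prod_i\binom{n_i+\alpha_i}{\alpha_i}$, the remaining task is the identity
\[
\sum_{|\alpha|=k}\prod_{i=1}^m\binom{n_i+\alpha_i}{\alpha_i}=\binom{|n|+m-1+k}{k},
\]
which is obtained at once by comparing the coefficient of $z^k$ in the two sides of $\prod_{i=1}^m(1-z)^{-(n_i+1)}=(1-z)^{-(|n|+m)}$. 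This identity turns the combinatorial factor into $(m-1+|n|+k)!/\bigl(k!(m-1+|n|)!\bigr)$, which cancels precisely against the ratio of factorials, leaving $\bbeta^2_{|n|+k}/\bbeta^2_{|n|}=\|T_\delta^k f_{|n|}\|^2$, as desired.

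The only mildly delicate step is the combinatorial identity, but it is classical (a convolution of negative-binomial series); everything else is bookkeeping with \eqref{QkT} and \eqref{beta-n}. No compactness or spectral input is needed, and the hypothesis of sphericity enters only through the availability of \eqref{beta-n} from Theorem~\ref{th:w-sh}.
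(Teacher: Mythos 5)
Your proof is correct, and every step checks out: the diagonality of $T^{*\alpha}T^{\alpha}$, the reduction via \eqref{beta-n}, and the convolution identity $\sum_{|\alpha|=k}\prod_i\binom{n_i+\alpha_i}{\alpha_i}=\binom{|n|+m-1+k}{k}$ all hold, and the factorials cancel exactly as you claim to leave $\bbeta^2_{|n|+k}/\bbeta^2_{|n|}=\|T_\delta^k f_{|n|}\|^2$. The paper reaches the same diagonal formula $Q^k_T(I)e_n=\delta^2_{|n|}\delta^2_{|n|+1}\cdots\delta^2_{|n|+k-1}e_n$ by a different and shorter mechanism: it (implicitly) uses the recursion $Q^k_T(I)=Q_T\bigl(Q^{k-1}_T(I)\bigr)$ together with the single-step computation
\[
Q_T(I)e_n=\sum_{i=1}^m\bigl|w^{(i)}_n\bigr|^2e_n=\delta^2_{|n|}\sum_{i=1}^m\frac{n_i+1}{|n|+m}\,e_n=\delta^2_{|n|}e_n,
\]
after which induction on $k$ telescopes immediately. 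Your route instead expands $Q^k_T(I)$ by the closed formula \eqref{QkT} and absorbs the multinomial bookkeeping into a negative-binomial convolution; the identity you invoke is doing precisely the work that the one-line telescoping does in the paper. The inductive argument is more economical and avoids the combinatorial identity altogether, while your version has the minor advantage of producing the explicit diagonal entries of each $T^{*\alpha}T^{\alpha}$ along the way; both are complete proofs.
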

\begin{proof}
It is easy to see that
the operator $Q^k_T(I)$ is diagonal with respect to the basis
$\{e_n\}$, and
% One may verify the following by induction on $k \geq 1$:
% The following identity was derived in \cite[Proof of Lemma 3.3]{Ch}:
%
%\beqn
%\inp{Q^k_T(I)e_n}{e_n}=
%\delta^2_{|n|} \delta^2_{|n| +1} \cdots \delta^2_{|n| + k-1}~ (k \in \mathbb N, ~n \in \mathbb N^m).
%\eeqn
%
\beq
\label{Qken}
Q^k_T(I)e_n =
\delta^2_{|n|} \delta^2_{|n| +1} \cdots \delta^2_{|n| + k-1} e_n~ (k \in \mathbb N, ~n \in \mathbb N^m).
\eeq
The desired conclusion is now immediate.
\end{proof}

\begin{proposition}
\label{sp-rad} Let $T$ be a spherical commuting, bounded
$m$-variable weighted shift with respect to the orthonormal basis
$\{e_n\}_{n \in \mathbb N^m}.$ Let $T_{\delta} : \{\delta_{k}\}_{k
\in \mathbb N}$ be the shift associated with $T$ with respect to the
orthonormal basis $\{f_k\}_{k \in \mathbb N}.$ Then the
geometric spectral
radius $r(T):=\sup \{\|z\|_2 : z \in \sigma(T)\}$ of $T$ is equal to
the spectral radius of $T_{\delta}$.
\end{proposition}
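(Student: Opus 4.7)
The plan is to combine the multivariable joint spectral radius formula
\[
r(T)=\lim_{k\to\infty}\|Q_T^k(I)\|^{1/(2k)},
\]
valid for any commuting bounded $m$-tuple, with the diagonal identity \eqref{id} from the preceding lemma.

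First, I would invoke the spectral radius formula above. This is a standard fact for commuting tuples (see \cite{Cu}); it can be derived from the identity \eqref{QkT}, which gives $Q_T^k(I)=\sum_{|\alpha|=k}\tfrac{k!}{\alpha!}T^{*\alpha}T^\alpha$, together with the characterization $r(T)=\lim_k\max_{|\alpha|=k}\|T^\alpha\|^{1/|\alpha|}$ of the geometric spectral radius over the Taylor spectrum. As a sanity check: for $m=1$, the right-hand side reduces to $\lim_k\|T^{*k}T^k\|^{1/(2k)}=\lim_k\|T^k\|^{1/k}=r(T)$.

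Next, I would compute $\|Q_T^k(I)\|$ using the diagonal form \eqref{Qken}: the operator $Q_T^k(I)$ acts on $e_n$ as multiplication by $\prod_{i=0}^{k-1}\delta_{|n|+i}^2$, an eigenvalue which, by sphericity, depends only on $|n|$. Consequently,
\[
\|Q_T^k(I)\|=\sup_{n\in\mathbb N^m}\prod_{i=0}^{k-1}\delta_{|n|+i}^2=\sup_{j\in\mathbb N}\prod_{i=0}^{k-1}\delta_{j+i}^2=\|T_\delta^k\|^2,
\]
the last equality being the standard norm formula for a unilateral weighted shift applied to the basis $\{f_j\}$. Substituting into the spectral radius formula yields
\[
r(T)=\lim_{k\to\infty}\|T_\delta^k\|^{1/k}=r(T_\delta),
\]
as desired.

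The only delicate point is the justification of the multivariable spectral radius formula, after which everything reduces to a one-line calculation based on the diagonal structure of $Q_T^k(I)$. I emphasize that the role of the sphericity hypothesis in this step is precisely to ensure that the eigenvalues of $Q_T^k(I)$ depend only on $|n|$, so that the supremum over $n\in\mathbb N^m$ collapses into the supremum over $j\in\mathbb N$ defining $\|T_\delta^k\|^2$; without sphericity, one would only obtain inequalities between $r(T)$ and the spectral radius of an associated one-variable object.
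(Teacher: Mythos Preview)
Your proof is correct and follows essentially the same route as the paper: invoke the joint spectral radius formula $r(T)=\lim_{k\to\infty}\|Q_T^k(I)\|^{1/(2k)}$ (the paper cites M\"uller--So\l tysiak and Ch\={o}--\.Zelazko rather than Curto), then use the diagonal identity \eqref{id}/\eqref{Qken} to rewrite $\|Q_T^k(I)\|=\sup_j \|T_\delta^k f_j\|^2=\|T_\delta^k\|^2$ and conclude via the Gelfand formula for $T_\delta$.
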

\begin{proof}
By \cite[Theorem 1]{M-S} and \cite[Theorem 1]{C-Z}, the geometric spectral
radius $R$ of $T$ is given by
\beqn
R=\lim_{k \rar \infty}
\big\|Q_T^k(I)\big\|^{\frac{1}{2k}}.
% \Big\|\sum_{|\alpha|=k}\frac{k!}{\alpha!}{T^*}^{\alpha}T^{\alpha}\Big\|^{\frac{1}{2k}}.
\eeqn It is easy to see that the orthogonal basis $\{e_n\}_{n \in
\N^m}$ diagonalizes the positive operator
%%%%%%%%%%%%%%%%%%%%%%%%%%%%%%%%%%%%%%%%%%%%%%%%%%%
\linebreak                               %%%%%%%%%%%%%%%%%%%%%%%%%%%%%%%%%%%%%%%%%%%%%%%%%%%
%%%%%%%%%%%%%%%%%%%%%%%%%%%%%%%%%%%%%%%%%%%%%%%%%%%
$Q^k_T(I)$. Also, by (\ref{id}),
$\inp{Q^k_T(I)e_n}{e_n}=\|T^k_{\delta}f_{|n|}\|^2$ for every $k \in
\mathbb N$ and $n \in \mathbb N^m.$ It follows that
\beqn
\lim_{k \rar \infty}\big\| Q^k_T(I) \big\|^{1/2k}
% \lim_{k \rar \infty}\Big\|\sum_{|\alpha|=k}\frac{k!}{\alpha!}{T^*}^{\alpha}T^{\alpha} \Big\|^{1/2k}
=
\sup_{k \geq 0}\|T^k_{\delta}\|^{1/k} = r(T_\delta),
\eeqn
by the well-known general formula for the spectral radius of a linear operator.
% The required conclusion follows from the known spectral radius formula for one-variable weighted shifts \cite{Sh}.
\end{proof}

Let ${\mathcal C}(\mathcal H)$ denote the norm-closed
ideal of compact operators on $\mathcal H.$
Since $B(\mathcal H) / {\mathcal C}(\mathcal H)$ is a unital
$C^*$-algebra, the {\it Calkin algebra}, there exist a Hilbert space
$\mathcal K$ and an injective unital $*$-representation $\pi :
B(\mathcal H) / {\mathcal C}(\mathcal H) \rar B(\mathcal K)$
\cite[Chapter VIII]{Co-0}. In particular, $\pi \circ q : B(\mathcal
H) \rar B(\mathcal K)$ is a unital $*$-representation, where $q :
B(\mathcal H) \rar B(\mathcal H)/ {\mathcal C}(\mathcal H)$ stands
for the {\it quotient} ({\it Calkin}) {\it map}. Set $\pi \circ
q(T):= (\pi \circ q(T_1), \cdots, \pi \circ q(T_m)).$

We recall that a tuple $T=(T_1,\dots,T_m)$ is called \textit{essentially normal}
if all commutators $[T_j,T_k]$ and $[T_j^*,T_k]$, $j,k=1,\dots,m$ are compact.
The following (known) version of the Fuglede--Putman commutativity theorem
holds: given operators $A$ and $N$ on a Hilbert space $H$,
if $N$ is essentially normal and the commutator $[A,N]$ is compact,
then the commutator $[A,N^*]$ also is compact. This follows
by applying the classical Fuglede--Putman theorem
to operators $\pi\circ q(N)$ and $\pi\circ q(A)$.
We refer to \cite{Wei} for an additional information.

It follows that a commutative tuple $T$ is essentially normal
whenever $[T^*_j, T_j]$ are compact for $j=1,\dots, m$.

\begin{remark}
\label{le4.5} Let $T : \{w^{(i)}_n\}$ be a bounded spherical $m$-variable
weighted shift and let $T_{\delta} : \{\delta_k\}_{k \in \mathbb N}$
be the one-variable shift associated with $T.$
As it follows from \cite[Corollary 4.4]{G-H-X}, $T$ is essentially normal if and only if
$T_{\delta}$ is essentially normal if and only if $\lim_{ k \rar
\infty} \left(\delta^2_{k} - \delta^2_{k-1}\right)=0$
(observe that by~\eqref{a-k}, $\frac{a_k}{a_{k+1}}=\frac{k+1}{k+m}\frac{\bbeta_{k+1}^2}{\bbeta_k^2}$).
\end{remark}
% The following is borrowed from \cite[Corollary 4.4]{G-H-X}.
% \begin{proposition}
% \label{le4.5} Let $T : \{w^{(i)}_n\}$ be a spherical $m$-variable
% weighted shift and let $T_{\delta} : \{\delta_k\}_{k \in \mathbb N}$
% be the one-variable shift associated with $T.$ \marginpar{Now we can
% just cite \cite{G-H-X} for the spher. case}
% %If $\{\delta_k\}_{k \in \mathbb
% %N}$ is the scalar weight sequence
% % attached to $T$
%  Then $T$ is essentially normal if and only if
% $T_{\delta}$ is essentially normal if and only if $\lim_{ k \rar
% \infty} \left(\delta^2_{k} - \delta^2_{k-1}\right)=0.$
%
% %\begin{enumerate}
% %
% %\item
% %\label{1}
% %If  $U$ is essentially normal, then $T$ is essentially normal.
% %
% %\item
% %\label{2}
% %If $T$ is essentially normal and $\inf_k \delta_k > 0$, then
% %$U$ is essentially normal.
% %
% %\end{enumerate}
% %In particular, if we assume that the sequence $\{\delta_k\}$ is increasing,
% %then the essential normality of $U$ is equivalent to the essential normality of $T$.
% \end{proposition}

The main theorem of this section describes several spectral parts of spherical $m$-shifts.
\begin{theorem}
\label{th:3.1} Let $M_z$ be a bounded spherical multiplication $m$-tuple in
$H^2(\beta)$, so that
the norm in $H^2(\beta)$ is given by \eqref{H2-beta-L2B} for a certain
sequence $\bbeta_0, \bbeta_1, \bbeta_2, \cdots, $ of positive numbers.
%
% OLD PART:
%
% Suppose there exists a sequence $\bbeta_0, \bbeta_1,
%\bbeta_2, \cdots, $ of positive numbers such that the norm on
%$H^2(\beta)$ is given by \beqn
%\|f\|^2_{\beta}=\sum_{k=0}^{\infty}\bbeta^2_k
%\|f_{[k]}\|^2_{L^2(\partial \mathbb B)}, \eeqn where $f_{[k]}$
%stands for the homogeneous part $\sum_{|n|=k}a_kz^n$ of the formal
%power series $f(z)=\sum_{n \in \mathbb N^m}a_n z^n.$
%
Let $R(M_z), r(M_z), i(M_z)$ be given by
\beq
\label{eq-sp-rad}
R(M_z)= \lim_{j \rar \infty} {\sup_{k \geq
0} \sqrt[j]{\frac{{\bbeta_{k+j}}}{{\bbeta_k}}}},
\eeq
\beq
\label{rad-cgn} r(M_z) = \liminf_{j \rar \infty} \sqrt[j]{\bbeta_j},
\eeq
\beq \label{inn-rad} i(M_z)= \lim_{j \rar \infty} {\inf_{k \geq
0} \sqrt[j]{\frac{{\bbeta_{k+j}}}{{\bbeta_k}}}}.
\eeq
Then $i(M_z)\le r(M_z)\le R(M_z)$, and the following statements are true:
\begin{enumerate}

\item
The Taylor spectrum of $T$ is the closed ball $\overline{\mathbb
B}_{R(M_z)}$ in $\mathbb C^m.$

\item The ball $\mathbb B_{r(M_z)}$ in $\mathbb C^m$ is the largest open ball in which
all the power series in $H^2(\beta)$ converge.

\item Either
$\sigma_p(T^*)=\mathbb B_{r(M_z)}$ or $\sigma_p(T^*) = \overline{\mathbb
B}_{r(M_z)}.$

% OLD
%
%\item The approximate point-spectrum of $M_z$ contains the $\overline{A}_{i(M_z), R(M_z)}$
%of inner-radius $i(M_z)$ and outer radius $R(M_z).$

\item  $\sigma_{ap}(M_z)= \overline{A}_{i(M_z),\, R(M_z)}$,
where $\overline{A}_{i(M_z),\, R(M_z)}$ stands for the
closed ball shell
in $\mathbb C^m$ of inner-radius $i(M_z)$ and outer-radius $R(M_z)$.

\item If in addition, $\lim_{k \rar \infty} \frac{\bbeta_{k+1}}{\bbeta_k} -
\frac{\bbeta_{k}}{\bbeta_{k-1}}=0$ then the essential spectrum of
$M_z$ is the closed ball shell of inner-radius $\liminf_{k \rar
\infty} \frac{\bbeta_{k+1}}{\bbeta_k}$ and outer-radius $\limsup_{k
\rar \infty} \frac{\bbeta_{k+1}}{\bbeta_k}.$
\end{enumerate}
\end{theorem}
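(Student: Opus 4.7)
The unifying strategy is to reduce every assertion to the associated one-variable shift $T_\delta$ (with weights $\delta_k=\bbeta_{k+1}/\bbeta_k$) via the diagonal formula \eqref{Qken}, and to exploit the $\mathcal{U}(m)$-invariance of each spectral part coming from sphericity combined with polynomial convexity of the Taylor spectrum. For (1), Proposition \ref{sp-rad} together with $\|T_\delta^j\|=\sup_{k\ge 0}\bbeta_{k+j}/\bbeta_k$ identifies the geometric spectral radius of $M_z$ with $R(M_z)$; since $\sigma(M_z)$ is polynomially convex and $\mathcal{U}(m)$-invariant, and the polynomial hull of each sphere $\partial \mathbb{B}_s$ equals the closed ball $\overline{\mathbb{B}}_s$, we get $\sigma(M_z)=\overline{\mathbb{B}}_{R(M_z)}$. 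Part (2) is Cauchy--Hadamard applied to $\{\beta_n\}$: by \eqref{beta-n} and Stirling, the combinatorial factor tends to $1$, so the radius reduces to $\liminf_j \bbeta_j^{1/j}=r(M_z)$. For (3) I use $w\in\sigma_p(M_z^*)\Leftrightarrow \kappa(\cdot,w)\in H^2(\beta)$; substituting \eqref{rkernel} and grouping by total degree through \eqref{beta-n} reduces $\|\kappa(\cdot,w)\|^2$ to a scalar series $\sum_k a_k\|w\|_2^{2k}$ with $a_k$ from \eqref{a-k}, whose radius of convergence is $r(M_z)$, and membership of $w$ in $\sigma_p(M_z^*)$ depends only on $\|w\|_2$.

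The main work is in (4). The outer inclusion $\sigma_{ap}(M_z)\subset \overline{\mathbb{B}}_{R(M_z)}$ follows from (1). For the inner bound, let $\lambda\in\sigma_{ap}(M_z)$ with unit approximate eigenvectors $f_n$; since $M_z^\alpha f_n-\lambda^\alpha f_n\to 0$ for every $\alpha$, the multinomial theorem gives
\[
\inp{Q^j_{M_z}(I) f_n}{f_n}=\sum_{|\alpha|=j}\tfrac{j!}{\alpha!}\|M_z^\alpha f_n\|^2 \;\longrightarrow\; \sum_{|\alpha|=j}\tfrac{j!}{\alpha!}|\lambda^\alpha|^2=\|\lambda\|_2^{2j},
\]
while \eqref{Qken} yields $Q^j_{M_z}(I)\ge \inf_{k\ge 0}(\bbeta_{k+j}/\bbeta_k)^2 I$; taking $j$-th roots and $j\to\infty$ forces $\|\lambda\|_2\ge i(M_z)$. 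For the reverse inclusion, $\mathcal{U}(m)$-invariance of $\sigma_{ap}(M_z)$ lets me reduce to $\lambda=(r,0,\dots,0)$ with $i(M_z)\le r\le R(M_z)$, and I construct approximate eigenvectors inside $\sspan\{z_1^k:k\ge 0\}$, on which $M_{z_1}$ acts (via \eqref{beta-n}) as a one-variable weighted shift whose weights match $\delta_k$ asymptotically. The main obstacle is producing such vectors at intermediate radii; this requires a gap/block-sum argument selecting $k_n,j_n\to\infty$ with $\bbeta_{k_n+j_n}/\bbeta_{k_n}\approx r^{j_n}$, which is possible because $r^{j_n}$ lies between the infimum and supremum of $\bbeta_{k+j_n}/\bbeta_k$ over $k$ for all large $j_n$ by the very definitions of $i(M_z)$ and $R(M_z)$.

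For (5), the hypothesis $\delta_k-\delta_{k-1}\to 0$ combined with Remark \ref{le4.5} gives essential normality of $T_\delta$, which upgrades, via the diagonal structure \eqref{Qken} and the Fuglede--Putnam remark recalled just before Remark \ref{le4.5}, to essential normality of $M_z$. Hence $\pi\circ q(M_z)$ is a commuting tuple of normal elements with joint spectrum $\sigma_e(M_z)$. Sphericity passes through the unital $*$-representation $\pi\circ q$ by Remark \ref{rk1.2}(2), so $\sigma_e(M_z)$ is $\mathcal{U}(m)$-invariant, hence a union of spheres $\partial\mathbb{B}_s$. A radius $s$ belongs to this union exactly when it lies in the essential spectrum of the corresponding normal radial component; via the diagonal form \eqref{Qken} together with the classical description of the essential spectrum of an essentially normal weighted shift, this set of admissible $s$ is exactly $[\liminf_k\delta_k,\limsup_k\delta_k]$, yielding the stated closed ball shell.
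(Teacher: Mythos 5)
Your overall strategy -- reduce every part to the associated one-variable shift via the diagonal identity \eqref{Qken} and exploit the spherical symmetry of each spectral part -- is the same as the paper's, and parts (2), (3), (5) and the inclusion $\sigma_{ap}(M_z)\subseteq\overline{A}_{i(M_z),R(M_z)}$ in (4) essentially reproduce the paper's arguments. The one genuine gap is in (1): you assert that the Taylor spectrum of $M_z$ is polynomially convex and then fill in the union of spheres by taking hulls. Polynomial convexity of the Taylor spectrum is not a general property of commuting tuples (the polynomially convex hull of $\sigma(T)$ equals the spectrum relative to the closed algebra generated by the tuple, which is typically strictly larger; already a single unitary shows a Taylor spectrum need not be polynomially convex), and for multi-shifts it is a \emph{consequence} of the statement being proved, not something you may invoke a priori. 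What is actually needed, and what the paper proves, is that $\sigma(M_z)$ is \emph{connected} (Lemma \ref{le3.2}, via the Shilov idempotent theorem applied to $M_z^*$ together with the density of $\bigcup_{k}\ker(D_{S_k^*})$); a connected, spherically symmetric, compact set containing $0$ must then be a closed ball, with radius $R(M_z)$ by Proposition \ref{sp-rad}. Without connectedness (or a genuine proof of polynomial convexity) your argument only shows that $\sigma(M_z)$ is a spherically symmetric subset of $\overline{\mathbb B}_{R(M_z)}$ meeting $\partial\mathbb B_{R(M_z)}$.

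The reverse inclusion in (4) is also under-specified at its hardest point. Two things must be added: (i) the vector $g$ you build inside $\overline{\operatorname{span}}\{z_1^l\}$ has to be an approximate eigenvector for the \emph{tuple}, so besides $\|(M_{z_1}-r)g\|$ being small you must check that $\|M_{z_j}g\|$ is small for $j\ge 2$; this holds because $w^{(j)}_{(l,0,\dots,0)}=\delta_l\sqrt{1/(l+m)}$ is $O(l^{-1/2})$, so it suffices to support $g$ on high powers of $z_1$, but it is precisely the step that upgrades an approximate eigenvalue of the restricted one-variable shift to a joint approximate eigenvalue. (ii) The Ridge-type block construction you sketch requires choosing $a<r<b$ with $i(M_z)<a$ and $b<R(M_z)$, hence needs $i(M_z)<R(M_z)$; the degenerate case $i(M_z)=R(M_z)$ must be handled separately, which the paper does via the projection property of the approximate point spectrum applied to $\sigma_{ap}(M_{z_1})$.
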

The first part of the Theorem \ref{th:3.1}  is obtained in
\cite[Theorem 4.5(1)]{G-H-X}, under the additional assumption of essential normality, by entirely different methods.
An upper estimate of the geometric joint spectral radius of
$M_z$ is given in \cite[Theorem 9.6]{Kapt}.

% \marginpar{\bf Former Lemma 3.6 was separated into two statements}
Note further that statement (5) of the theorem is precisely
\cite[Theorem 4.5(2)]{G-H-X}.
We can give a more general version of this statement.

\begin{lemma} \label{Pro4.1ess}
Let $T$ be an essentially normal spherical $m$-tuple. Then
the essential spectrum of $T$ is given by
\beqn
\sigma_e(T)=\left\{z \in \mathbb C^m : \|z\|^2_2 \in
\sigma_e(Q_T(I))\right\}.
\eeqn
\end{lemma}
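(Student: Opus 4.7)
My plan is to exploit two separate features: the $\mathcal U(m)$-invariance of $\sigma_e(T)$ forced by sphericity, and the spectral mapping theorem for the commuting normal tuple $\pi\circ q(T)$ in the Calkin algebra. Combining the two identifies $\sigma_e(T)$ completely from $\sigma_e(Q_T(I))$.

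First, I would show that $\sigma_e(T)$ is a union of spheres $\partial \mathbb B_r$. Because $T$ is spherical, for every $U\in\mathcal U(m)$ the tuple $T_U=(\sum_k u_{1k}T_k,\dots,\sum_k u_{mk}T_k)$ is unitarily equivalent to $T$ via $\Gamma(U)$, so $\sigma_e(T_U)=\sigma_e(T)$. On the other hand, the Taylor essential spectrum satisfies the polynomial spectral mapping theorem (being the Taylor spectrum of $\pi\circ q(T)$, cf.\ \cite{Cu}); applied to the linear mapping $z\mapsto Uz$ this yields $\sigma_e(T_U)=U\cdot \sigma_e(T)$. Hence $\sigma_e(T)=U\sigma_e(T)$ for every unitary $U$, and one concludes that $\sigma_e(T)=\{z\in\mathbb C^m:\|z\|_2\in E\}$ for some closed set $E\subset[0,\infty)$.

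Next, I would use essential normality to pin down $E$. By hypothesis all commutators $[T_j^*,T_k]$ are compact, so $\pi\circ q(T)$ is a tuple of mutually commuting normal operators on the auxiliary Hilbert space $\mathcal K$. For such a tuple the Taylor spectrum coincides with the joint (Gelfand) spectrum of the commutative $C^*$-algebra it generates, and the continuous functional calculus satisfies spectral mapping. Applying this to the polynomial $p(z)=\sum_{j=1}^m |z_j|^2$ and using $\sigma_e(T)=\sigma(\pi\circ q(T))$ gives
\[
\sigma_e(Q_T(I))=\sigma\bigl(\pi\circ q(Q_T(I))\bigr)=\Big\{\sum_{j=1}^m|z_j|^2:z\in\sigma_e(T)\Big\}=\{r^2:r\in E\}.
\]
Combining this with the description of $\sigma_e(T)$ from the first step yields $\{z\in\mathbb C^m:\|z\|_2^2\in\sigma_e(Q_T(I))\}=\{z:\|z\|_2\in E\}=\sigma_e(T)$, which is the desired identity.

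The only delicate point is the first step, namely the linear spectral mapping $\sigma_e(T_U)=U\sigma_e(T)$ for the Taylor essential spectrum; this is, however, a routine consequence of passing to the Calkin representation $\pi\circ q$ and invoking the polynomial spectral mapping theorem for the Taylor spectrum of the resulting commuting tuple. Everything else is bookkeeping.
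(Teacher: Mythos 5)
Your proof is correct and follows essentially the same route as the paper: both arguments combine the Gelfand description of $\sigma_e(T)$ as the joint spectrum of the commuting normal tuple $q(T)$ in the Calkin algebra with the spherical symmetry of $\sigma_e(T)$ (which the paper isolates as Proposition~\ref{Pro4.1symm}, proved exactly by your step-one argument). The only difference is organizational — you compute $\sigma_e(Q_T(I))$ as the image of $\sigma_e(T)$ under $z\mapsto\|z\|_2^2$ in one stroke, whereas the paper verifies the two inclusions separately.
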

\begin{proof}
We adapt the proof of \cite[Theorem 4.5(2)]{G-H-X} to the
present situation. Suppose $T$ is essentially normal. Equivalently,
$(q(T_1), \cdots, q(T_m))$ is a commuting normal $m$-tuple in the
Calkin Algebra. Let $\mathcal M$ be the maximal ideal space of the
commutative $C^*$-algebra $C^*(q(T))$ generated by $q(T_1), \cdots,
q(T_m).$
%$C^*(T)/\mathcal C(\mathcal H).$
By \cite[Corollary 3.10]{Cu-0}, the essential spectrum of $T$ is
given by
\beqn
\sigma_e(T) = \{(\phi(q(T_1)), \cdots, \phi(q(T_m)) :
\phi \in \mathcal M\}.
\eeqn
If $\lambda \in \sigma_e(T)$ then for
some $\phi \in \mathcal M,$ \beqn q(Q_T(I) - \|\lambda\|^2 I) =
\sum_{j=1}^m q(T^*_j)q(T_j) -|\phi(q(T_j))|^2. \eeqn Clearly, $\phi$
annihilates $q(Q_T(I) - \|\lambda\|^2 I) \in C^*(q(T)).$ Thus
$q(Q_T(I) - \|\lambda\|^2 I)$ is not invertible, and hence
$\|\lambda\|^2 \in \sigma_e(Q_T(I)).$ Conversely, suppose
$\|\lambda\|^2_2 \in \sigma_e(Q_T(I))$ for some $\lambda \in \mathbb
C^m.$ Thus $q(Q_T(I) - \|\lambda\|^2_2 I)$ is not invertible in the
Calkin algebra, and hence in $C^*(q(T)).$ Thus there exists some
$\phi_{\lambda} \in \mathcal M$ annihilating $q(Q_T(I) -
\|\lambda\|^2_2 I).$ This gives $\|\lambda\|^2_2= \sum_{j=1}^m
|\phi_{\lambda}(q(T_j))|^2.$ On the other hand,
$(\phi_{\lambda}(q(T_1)), \cdots, \phi_{\lambda}(q(T_m)) \in
\sigma_e(T).$ By the spherical symmetry of the essential spectrum,
we must have $\lambda \in \sigma_e(T).$
\end{proof}

% We note that a special case of this Lemma was already obtained in \cite[Corollary 4.3]{Cu-Sa}.

Let us pass to the proof of Theorem \ref{th:3.1}. It involves several lemmas
and propositions. The first lemma
is a multi-variable analog of a well-known fact about
the approximate point spectrum \cite[Proposition 13]{Sh}.
% The next lemma collects some elementary spectral properties of spherical tuples.

\begin{lemma} \label{lem3.5n}
Let $T$ be a commuting $m$-tuple of operators on a Hilbert space. Then
the approximate point-spectrum of $T$ is disjoint from the
open ball $\mathbb B_{m_\infty(T)},$ where
\bq
\label{m-infty}
m_\infty(T)=
\sup_{k\ge 1} \enspace
\inf_{h \in \mathcal
H,~ \|h\|=1 } \big\|Q_T^k(I)h\big\|^{\frac 1 {2k}}.
\eq
\end{lemma}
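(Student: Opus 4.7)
The idea is the standard one: promote the approximate eigenvector relation for the tuple $T$ to a scalar convergence statement for $\langle Q_T^k(I) h_n, h_n\rangle$, and then exploit positivity of $Q_T^k(I)$ to compare with $\inf_{\|h\|=1}\|Q_T^k(I)h\|$.

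Fix $\lambda=(\lambda_1,\dots,\lambda_m)\in \sigma_{ap}(T)$ and let $\{h_n\}\subset \mathcal H$ be a sequence of unit vectors with $(T_j-\lambda_j)h_n\to 0$ for every $j$. First I would argue by induction on $|\alpha|$ that $T^\alpha h_n - \lambda^\alpha h_n\to 0$ for every $\alpha\in \mathbb N^m$: since the $T_j$ commute, if the claim holds for $\alpha$, then
\[
T^{\alpha+\epsilon_j}h_n-\lambda^{\alpha+\epsilon_j}h_n
= T_j\bigl(T^\alpha h_n-\lambda^\alpha h_n\bigr)
+\lambda^\alpha (T_j-\lambda_j)h_n\to 0,
\]
using that $T_j$ is bounded. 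In particular $\|T^\alpha h_n\|^2\to |\lambda^\alpha|^2$. Combining this with \eqref{QkT} and the multinomial theorem,
\[
\langle Q_T^k(I)h_n,h_n\rangle=\sum_{|\alpha|=k}\frac{k!}{\alpha!}\|T^\alpha h_n\|^2
\xrightarrow[n\to\infty]{}
\sum_{|\alpha|=k}\frac{k!}{\alpha!}|\lambda^\alpha|^2
= \bigl(|\lambda_1|^2+\cdots+|\lambda_m|^2\bigr)^k = \|\lambda\|_2^{2k}.
\]

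Next I would use positivity of $A:=Q_T^k(I)$ to pass from the quadratic form to the norm. Since $A\ge 0$, one has $\|Ah\|^2=\langle A^2h,h\rangle$, so
\[
\inf_{\|h\|=1}\|Ah\|
=\bigl(\min \sigma(A^2)\bigr)^{1/2}
=\min \sigma(A)
=\inf_{\|h\|=1}\langle Ah,h\rangle.
\]
Therefore $\langle Q_T^k(I)h_n,h_n\rangle\ge \inf_{\|h\|=1}\|Q_T^k(I)h\|$ for each $n$, and letting $n\to\infty$ gives
\[
\|\lambda\|_2^{2k}\ \ge\ \inf_{\|h\|=1}\bigl\|Q_T^k(I)h\bigr\|\qquad (k\ge 1).
\]
Raising to the power $1/(2k)$ and taking the supremum over $k$ yields $\|\lambda\|_2\ge m_\infty(T)$, so $\lambda\notin \mathbb B_{m_\infty(T)}$, which is the desired conclusion.

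The only mildly delicate point is the identity $\inf_{\|h\|=1}\|Ah\|=\inf_{\|h\|=1}\langle Ah,h\rangle$ for positive $A$ — everything else is routine algebra plus the elementary fact that approximate eigenvectors of a commuting tuple are automatically approximate eigenvectors of every monomial $T^\alpha$. No other machinery is needed.
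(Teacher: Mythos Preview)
Your proof is correct. It is the contrapositive of the paper's argument and uses the same ingredients, packaged slightly differently. The paper fixes $\lambda$ with $\|\lambda\|_2<m_\infty(T)$, chooses $k$ and $\mu$ with $\|\lambda\|_2<\mu$ and $\|Q_T^k(I)h\|\ge\mu^{2k}$ for all unit $h$, and then uses Cauchy--Schwarz together with the multinomial identity to bound
\[
\sum_{|\alpha|=k}\frac{k!}{\alpha!}\,\|(T^\alpha-\lambda^\alpha)h\|^2\ \ge\ (\mu^k-\|\lambda\|_2^k)^2>0
\]
uniformly in unit $h$, which gives $\lambda\notin\sigma_{ap}(T)$ directly. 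You instead take $\lambda\in\sigma_{ap}(T)$, pass $\langle Q_T^k(I)h_n,h_n\rangle$ to the limit $\|\lambda\|_2^{2k}$, and compare with $\inf_{\|h\|=1}\|Q_T^k(I)h\|$ via the positivity identity you single out. Both approaches hinge on the same two facts (the multinomial expansion of $\|\lambda\|_2^{2k}$ and the equality $\inf_{\|h\|=1}\|Ah\|=\inf_{\|h\|=1}\langle Ah,h\rangle$ for positive $A$); the paper's version yields a quantitative uniform lower bound without introducing a sequence, while yours is perhaps conceptually cleaner. Neither offers more generality than the other.
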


\begin{proof}
% (2): This part holds true for any $m$-tuple $T.$
Take any $\lambda \in
\mathbb C^m$ such that $\|\lambda\|_2 < m_\infty(T)$.
Then there exist some $\mu>0$ and integer $k > 0$
such that $\|\lambda\|_2 < \mu$ and
$\|Q_T^k(I) h\|\ge \mu^{2k}\|h\|$ for any $h \in \mathcal H$.

Put $\la^\al=\la_1^{\al_1}\dots \la_m^{\al_m}$ for $\al\in\mathbb N^m$. Notice that
\[
\sum_{|\al|=k}\frac {k!}{\al!}\, |\la^\al|^2 =
\big(|\la_1|^2+\dots+|\la_m|^2\big)^k
=\|\la\|_2^{2k}.
\]
Hence, by the Cauchy-Schwarz inequality,
for any unit vector $h \in \mathcal H$,
\beqn
\sum_{|\al|=k} \frac {k!}{\al!} \, \|T^\al h- \la^\al h\|^2 & \geq &
 \sum_{|\al|=k} \frac {k!}{\al!} \, \big(\|T^\al h\|- |\la^\al|\big)^2 \\
&\geq & \sum_{|\al|=k} \frac {k!}{\al!} \, \|T^\al h\|^2
 -2\big(\sum_{|\al|=k} \frac {k!}{\al!} \, \|T^\al h\|^2\big)^{1/2} \|\lambda\|_2^k + \|\lambda\|_2^{2k} \\
& = &
\|Q_T^k(I) h\|-2 \|Q_T^k(I) h\|^{1/2} \|\lambda\|_2^k + \|\lambda\|_2^{2k}
\geq  (\mu^k -\|\la\|_2^k)^2 >0. % \qedhere
\eeqn
Therefore $\la\notin \sigma_{ap}(T)$.
\end{proof}

\begin{proposition} \label{Pro4.1symm}
The Taylor spectrum, approximate point-spectrum,
point-spectrum, essential spectrum of a spherical $m$-tuple are spherically symmetric.
\end{proposition}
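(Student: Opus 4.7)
The plan is to fix any $U\in\mathcal U(m)$ and any of the four spectra, call it $\sigma_?(T)$, and prove that $U\sigma_?(T)=\sigma_?(T)$. Since $U$ is arbitrary in $\mathcal U(m)$, this is precisely spherical symmetry of $\sigma_?(T)$. The core of the argument is the chain
\[
\sigma_?(T)=\sigma_?(T_U)=U\sigma_?(T),
\]
where the first equality comes from sphericity of $T$ and the second from a spectral-mapping statement for the linear map $L_U(z)=Uz$.

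The first equality is immediate from the definition: $T_U=\Gamma(U)T\Gamma(U)^{-1}$ for a unitary $\Gamma(U)\in B(\mathcal H)$, hence $T_U$ is unitarily equivalent to $T$, and each of the four spectra is invariant under unitary equivalence of commuting tuples. For $\sigma_e$ this is handled by pushing the argument to the Calkin algebra through the injective unital $*$-representation $\pi\circ q$ introduced just before the statement; $\pi\circ q(\Gamma(U))$ remains unitary, and $\sigma_e(T)$ equals the spectrum of $\pi\circ q(T)$ there.

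The second equality is the point where I would work case by case. For $\sigma$, $\sigma_{ap}$, and $\sigma_e$, it is the polynomial spectral mapping theorem applied to the components of $L_U$; I would simply cite the survey \cite{Cu} which is already referenced at the start of Section~3. If one wants a self-contained check for $\sigma_{ap}$, unit vectors $h_n$ satisfying $\|T_jh_n-\lambda_jh_n\|\to 0$ for each $j$ also satisfy $\|(T_U)_jh_n-(U\lambda)_jh_n\|\le\sum_k|u_{jk}|\,\|T_kh_n-\lambda_kh_n\|\to 0$, so $U\lambda\in\sigma_{ap}(T_U)$, and the reverse inclusion follows by running the same estimate with $U^*$ in place of $U$. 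For $\sigma_p$ I would bypass spectral mapping and argue directly: from $T_U=UT$ one has $T_k=\sum_j\overline{u_{jk}}(T_U)_j$, so a nonzero vector $h$ is a joint eigenvector of $T$ with joint eigenvalue $\mu$ if and only if it is a joint eigenvector of $T_U$ with joint eigenvalue $U\mu$, yielding $\sigma_p(T_U)=U\sigma_p(T)$.

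No serious obstacle is anticipated. The only item that takes a little care is assembling the polynomial spectral-mapping statements for the three ``outer'' spectra uniformly (they are classical but live in slightly different references); this is precisely why the proof is a handful of lines rather than one.
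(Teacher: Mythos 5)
Your argument is correct and follows essentially the same route as the paper: spectral mapping for the linear map $z\mapsto Uz$ combined with unitary equivalence of $T$ and $T_U$ handles the Taylor and approximate-point spectra, a direct eigenvector computation handles the point spectrum, and the essential spectrum is reduced to the Taylor spectrum of $\pi\circ q(T)$ in the Calkin representation, which is again spherical by Remark~\ref{rk1.2}(2). The only cosmetic difference is that the paper does not invoke a spectral mapping theorem for $\sigma_e$ directly but gets it for free from $\sigma_e(T)=\sigma(\pi\circ q(T))$, which is exactly the reduction you also describe.
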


\begin{proof}
The spherical symmetry of Taylor spectrum and approximate point-spectrum follows immediately from the spectral mapping property for polynomials.
On the other hand, the spherical symmetry of the point spectrum follows from the definition.
We now check the assertion for the essential
spectrum. Let $T$ be our spherical $m$-tuple and let $\pi,$ $q,$ $\mathcal K$ be as in the discussion
following Proposition \ref{sp-rad}. One may deduce from
\cite[Theorem 6.2]{Cu} and spectral permanence for the Taylor
spectrum that $\sigma_e(T) = \sigma(\pi \circ q(T))$. Since the
Taylor spectrum of a spherical tuple has spherical symmetry, it now
suffices to show that $\pi \circ q(T)$ is spherical.
This follows from Remark \ref{rk1.2}(2).
\end{proof}

In the single variable case, the following result was obtained by R.
L. Kelley (refer to \cite{R}).
\begin{lemma}
\label{le3.2} Let $M_z$ be a bounded multiplication $m$-tuple in
$H^2(\beta).$ Then the Taylor spectrum of $M_z$ is connected.
\end{lemma}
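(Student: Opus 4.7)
My plan is to prove Lemma~\ref{le3.2} by contradiction, exploiting Taylor's holomorphic functional calculus together with the algebraic structure of the commutant of $M_z$ on $H^2(\beta)$.

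Suppose $\sigma(M_z)=A\sqcup B$ with $A,B$ disjoint non-empty closed subsets of $\mathbb{C}^m$. Choose disjoint open neighborhoods $U_A\supset A$ and $U_B\supset B$, and let $\chi\in\Hol(U_A\cup U_B)$ equal $1$ on $U_A$ and $0$ on $U_B$. Taylor's holomorphic functional calculus then produces an operator $P:=\chi(M_z)\in B(H^2(\beta))$ that commutes with each $M_{z_j}$ and satisfies $P^2=P$ (since $\chi^2=\chi$). The Taylor spectral mapping theorem gives $\sigma(P)=\chi(\sigma(M_z))=\{0,1\}$, hence $P\ne 0$ and $P\ne I$.

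Next, I realize $P$ as multiplication by a formal power series. Since $1$ is a cyclic vector for $M_z$ and polynomials are dense in $H^2(\beta)$, set $q:=P(1)\in H^2(\beta)$. The commutation relations $PM_{z_j}=M_{z_j}P$ imply $P(z^n)=z^n\cdot q$ for every $n\in\mathbb{N}^m$, where the product is computed in the ring $\mathbb{C}[[z_1,\ldots,z_m]]$ of formal power series. For arbitrary $f\in H^2(\beta)$, take polynomials $p_k\to f$; boundedness of $P$ gives $Pp_k=p_kq\to Pf$ in $H^2(\beta)$, while continuity of the coefficient functionals on $H^2(\beta)$ (note that $\{z^n\}_{n\in\mathbb{N}^m}$ is an orthogonal sequence) gives $(p_kq)_n\to(fq)_n$ coefficientwise. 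Hence $Pf=fq$ as formal power series.

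Applying this identity to $f=q$ together with $P^2=P$ yields $q^2=P(q)=P^2(1)=P(1)=q$, so $q(q-1)=0$ in $\mathbb{C}[[z_1,\ldots,z_m]]$. Since this ring is an integral domain (as seen by comparing lowest nonzero homogeneous components of an alleged product of non-zero elements), $q=0$ or $q=1$, whence $P=0$ or $P=I$, contradicting the conclusion of the second paragraph. The main delicate point is the extension of the identity $P(f)=fq$ from polynomials to all of $H^2(\beta)$, which requires combining boundedness of $P$ with the continuity of the coefficient-extraction functionals; everything else is a straightforward application of Taylor's calculus and the ring structure of formal power series.
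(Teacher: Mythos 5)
Your proof is correct. Both you and the paper begin at the same place: a disconnection of $\sigma(M_z)$ yields, via the Taylor calculus (the paper invokes it in the packaged form of the Shilov idempotent theorem), a nontrivial idempotent $P$ commuting with the tuple. From there the arguments diverge. The paper passes to the adjoint tuple, lets $K_1$ be the component of $\sigma(M_z^*)$ containing $0$, splits $\mathcal H=\mathcal M_1\dotplus\mathcal M_2$ accordingly, and shows that each $\ker(D_{S_k^*})$, $S_k=(M_{z_1}^k,\dots,M_{z_m}^k)$, lies in $\mathcal M_1$ --- otherwise $0$ would belong to $\sigma(M_z^*|_{\mathcal M_2})=K_2$ --- whence $\mathcal M_1$ contains the dense union $\bigcup_k\ker(D_{S_k^*})$ and equals $\mathcal H$. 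You instead prove a structural fact about the commutant: any bounded $P$ commuting with $M_z$ acts on polynomials as multiplication by $q=P(1)$, and your careful passage to the limit (norm convergence of $Pp_k$ plus continuity of the coefficient functionals, which holds since $|a_n|\le\|f\|_\beta/\beta_n$) legitimately extends $Pf=fq$ to all of $H^2(\beta)$; then $q^2=q$ in the integral domain $\mathbb C[[z_1,\dots,z_m]]$ forces $P\in\{0,I\}$. Both routes ultimately rest on the density of polynomials in $H^2(\beta)$. The paper's argument is tied to the specific fact that $0\in\sigma_p(M_z^*)$ and to the kernels of the adjoint powers; yours yields the slightly stronger and reusable statement that the commutant of a multiplication tuple on $H^2(\beta)$ contains no nontrivial idempotents, at the modest cost of the formal-power-series bookkeeping in the limiting step, which you handle correctly.
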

\begin{proof}
By \cite[Corollary 3]{J-L}, $\sigma(M_z)$
has a poly-circular symmetry (that is, $\zeta\cdot w\in
\sigma(M_z)$ for any $w\in \sigma(M_z)$ and any $\zeta\in \mathbb T^m$).
%
%is a Reinhardt domain.
%
Note that $0$ belongs to the point spectrum $\sigma_p(M^*_z)$ of
$M^*_z,$ and hence to the Taylor spectrum of $M_z$ in view of
$\sigma_p(M^*_z) \subseteq \sigma(M^*_z) =\{\bar{z} : z \in
\sigma(M_z)\}$ (for $z=(z_1,\dots,z_m)$, we put $\bar z=(\bar z_1,\dots,\bar z_m)$).
It suffices to check that $\sigma(M^*_z)$ is connected. Let $K_1$
be the connected component of $\sigma(M^*_z)$ containing $0$ and let
$K_2=\sigma(M^*_z) \setminus K_1.$ By the Shilov Idempotent Theorem
\cite[Application 5.24]{Cu}, there exist invariant subspaces
$\mathcal M_1, \mathcal M_2$ of $M^*_z$ such that $\mathcal H =
\mathcal M_1 \dotplus \mathcal M_2$ and $\sigma(M^*_z|_{M_i}) =K_i$
for $i=1, 2.$

Let $h \in \mbox{ker}(D_{S^*_k}),$ where $S_k:=(M^k_{z_1}, \cdots,
M^k_{z_m})$ for a positive integer $k.$ Then $h=x + y$ for $x \in
\mathcal M_1$ and $y \in \mathcal M_2.$ It follows that
$(M^*_{z_j})^k x=0$ and $(M^*_{z_j})^ky=0$ for all $j=1, \cdots, m.$
If $y$ is non-zero then $0 \in \sigma_p(S^*_k) \subseteq \sigma(S^*_k),$ and hence by the spectral mapping property, $0 \in \sigma(M^*_z|_{\mathcal M_2}).$
Since $0 \notin K_2,$ we must have $y =0.$ It follows that $\mathcal
M_1$ contains the dense linear manifold $\bigcup_{k \geq 1}
\mbox{ker}(D_{S^*_k}),$ and hence $\mathcal M_1 = \mathcal H.$ Thus
the Taylor spectrum of $M^*_z$ is equal to $K_1.$ In particular, the
Taylor spectrum of $M_z$ is connected.
\end{proof}

\begin{lemma}
\label{lem4.3}
\beqn
\lim_{j \rar \infty}
\sup_{k \geq 0}
\Big(
\frac{\bbeta_{k+j+1}}{\bbeta_{k+1}}
\Big)^{\frac 1j}\,
\bigg(
\frac{(k+2) \cdots
(k+j+1)}{(k+m+1) \cdots (k+j+m)}\bigg)^{\frac 1{2j}}
=\lim_{j \rar \infty} \sup_{k \geq
0} \bigg(
\frac{\bbeta_{k+j+1}}{\bbeta_{k+1}}
\bigg)^{\frac 1j},
\eeqn
\beqn
\lim_{j \rar \infty}
\inf_{k \geq 0}
\Big(
\frac{\bbeta_{k+j+1}}{\bbeta_{k+1}}
\Big)^{\frac 1j}\,
\bigg(
\frac{(k+2) \cdots
(k+j+1)}{(k+m+1) \cdots (k+j+m)}\bigg)^{\frac 1{2j}}
= \lim_{j \rar \infty} \inf_{k \geq 0}
\Big(
\frac{\bbeta_{k+j+1}}{\bbeta_{k+1}}
\Big)^{\frac 1j}.
\eeqn
\end{lemma}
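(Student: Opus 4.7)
The plan is to show that the ``correction factor''
\[
c_{k,j} := \left(\frac{(k+2)\cdots(k+j+1)}{(k+m+1)\cdots(k+j+m)}\right)^{1/(2j)}
\]
tends to $1$ \emph{uniformly} in $k\ge 0$ as $j\to\infty$, and then to conclude both equalities by a squeeze argument.

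First I will rewrite the ratio as a telescoping product:
\[
\frac{(k+2)\cdots(k+j+1)}{(k+m+1)\cdots(k+j+m)}
=\prod_{i=1}^{j}\frac{k+1+i}{k+m+i}.
\]
For $m=1$ this is identically $1$ and the lemma is trivial, so assume $m\ge 2$. After cancelling the factors common to numerator and denominator, the ratio reduces to $\prod_{i=2}^{m}\frac{k+i}{k+j+i}$, a product of only $m-1$ terms in $(0,1]$. The crucial structural point is that the number of uncancelled factors depends on $m$ but not on $j$.

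Next I will bound $\log c_{k,j}$. Since $\log\bigl(1+\tfrac{j}{k+i}\bigr)$ is decreasing in $k$, each term attains its maximum at $k=0$, giving
\[
0\le -\log c_{k,j} \;=\; \frac{1}{2j}\sum_{i=2}^{m}\log\frac{k+j+i}{k+i} \;\le\; \frac{m-1}{2j}\log\frac{j+m}{2}.
\]
The right-hand side is independent of $k$ and vanishes as $j\to\infty$. Hence there exist $\rho_j\in(0,1]$ with $\rho_j\to 1$ such that $\rho_j\le c_{k,j}\le 1$ for every $k\ge 0$.

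Finally, setting $f_{k,j}:=(\bbeta_{k+j+1}/\bbeta_{k+1})^{1/j}$, multiplying through by $f_{k,j}\ge 0$ gives $\rho_j f_{k,j}\le f_{k,j}c_{k,j}\le f_{k,j}$. Taking $\sup_{k\ge 0}$ (respectively $\inf_{k\ge 0}$) preserves this, so
\[
\rho_j\sup_{k\ge 0}f_{k,j}\;\le\;\sup_{k\ge 0}f_{k,j}\,c_{k,j}\;\le\;\sup_{k\ge 0}f_{k,j},
\]
and analogously for infima. Since $\rho_j\to 1$, the limit as $j\to\infty$ on either side exists if and only if it does on the other, and in that case the two limits coincide. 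The only delicate step is the uniform-in-$k$ estimate of $c_{k,j}$, handled above; no genuine obstacle is anticipated.
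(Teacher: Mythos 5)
Your proof is correct and follows essentially the same route as the paper: you cancel the telescoping product down to the $m-1$ uncancelled factors $\prod_{i=2}^{m}\frac{k+i}{k+j+i}$, observe that the worst case over $k$ is $k=0$ (the paper phrases this as $\rho_{kj}$ being increasing in $k$), and conclude that the correction factor tends to $1$ uniformly in $k$, after which the squeeze argument is immediate. The explicit bound $\tfrac{m-1}{2j}\log\tfrac{j+m}{2}$ is a harmless quantitative refinement of the paper's observation that $\rho_{0j}^{1/(2j)}\to 1$.
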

\begin{proof}
For $k\ge 0$ and $j\ge 1$, put
\[
\rho_{kj}:=
\frac{(k+2) \ldots
(k+j+1)}{(k+m+1) \ldots (k+j+m)}
=
\frac{(k+2) \ldots
(k+m)}{(k+j+2) \ldots (k+j+m)}\, .
\]
It is easy to see that $\rho_{kj}$
is an increasing function of $k$ (for a fixed $j$).
Hence
$\rho_{0j}^{\frac 1 {2j}}\le \rho_{kj}^{\frac 1 {2j}}\le 1$ for all $k\ge 0$.
Since
$\rho_{0j}^{\frac 1 {2j}}\rar 1$ as $j\rar \infty$, both statements of the Lemma follow.
\end{proof}

% OLD LEMMA 4.3 WITH OLD PROOF:

%\begin{lemma}
%\label{lem4.3}
%\beqn
%\lim_{j \rar \infty} \sup_{k \geq 0} \sqrt[j]{
%\frac{\bbeta_{k+j+1}}{\bbeta_{k+1}}\sqrt{\frac{(k+2) \cdots
%(k+j+1)}{(k+m+1) \cdots (k+j+m)}}}=\lim_{j \rar \infty} \sup_{k \geq
%0} \sqrt[j]{ \frac{\bbeta_{k+j+1}}{\bbeta_{k+1}}},
%\eeqn
%\beqn
%\lim_{j \rar \infty} \inf_{k \geq 0} \sqrt[j]{
%\frac{\bbeta_{k+j+1}}{\bbeta_{k+1}}\sqrt{\frac{(k+2) \cdots
%(k+j+1)}{(k+m+1) \cdots (k+j+m)}}}=\lim_{j \rar \infty} \inf_{k \geq
%0} \sqrt[j]{ \frac{\bbeta_{k+j+1}}{\bbeta_{k+1}}}.
%\eeqn
%\end{lemma}
%\begin{proof} We only check the first assertion. Note that
%\beqn
%\sqrt[j]{ \sqrt{\frac{(k+2) \cdots (k+j+1)}{(k+m+1) \cdots
%(k+j+m)}}} \geq  \sqrt{\frac{k+2}{k+m+1}}.
%\eeqn
%It is easy to see
%that for sequences of positive numbers $a_k, b_k,$ $\sup_{k} a_k b_k
%= \sup_{k}a_k$ if $b_k$ increases to $1.$ It now follows from the
%previous estimate that \beqn \lim_{j \rar \infty} \sup_{k \geq 0}
%\sqrt[j]{
%\frac{\bbeta_{k+j+1}}{\bbeta_{k+1}}}\sqrt{\frac{k+2}{k+m+1}}=\lim_{j
%\rar \infty} \sup_{k \geq 0} \sqrt[j]{
%\frac{\bbeta_{k+j+1}}{\bbeta_{k+1}}}. \eeqn The first identity is
%now immediate.
%\end{proof}

\begin{proof} [Proof of Theorem \ref{th:3.1}]
(1): Suppose $M_z$ is spherical. We already recorded that the Taylor
spectrum of a spherical tuple has spherical symmetry.
By Lemma \ref{le3.2}, the Taylor spectrum of $M_z$ is connected.
It is easy
to see that the only bounded closed connected subsets of $\mathbb C^m$ with spherical
symmetry are balls and ball shells.
This follows from the fact that the unitary
group $\mathcal U(m)$ acts transitively on any sphere. Since $0$ belongs to the spectrum
$\sigma(M_z),$ it must be a ball centered at the origin. The formula
for the spectral radius of $M_z$ now follows from Proposition
\ref{sp-rad} and the known formula for the
spectral radius of $T_\delta$ \cite{Sh}.

(2): Let $w \in \mathbb B_{r(M_z)}.$ We claim that any power series
in $H^2(\beta)$ converges absolutely at $w$. It suffices to check that $w$ belongs to the point
spectrum $\sigma_p(M^*_z)$ of $T^*$, or, equivalently, $\sum_{n \geq
0} {|w^n|^2}/{\|z^n\|^2_\beta} < \infty$ (see \cite[Propositions 18--20]{J-L}).  Since
$\sigma_p(M^*_z)$ has spherical symmetry, it suffices to check that
$\tilde{w}=(|w|,0, \cdots, 0) \in \mathbb B_{r(M_z)}$ belongs to
$\sigma_p(M^*_z)$. But
$\sum_{n \geq 0} {|\tilde{w}^n|^2}/{\|z^n\|^2_\beta}
= \sum_{n_1 \geq 0}
{|w|^{2n_1}}/{\|z^{n_1}_1\|^2_\beta}=
\sum_{n_1=0}^{\infty} {{m-1+n_1} \choose n_1} \, \bbeta^{-2}_{n_1} |w|^{2n_1} < \infty$,
and the claim follows.
This also shows that $\mathbb B_{r(M_z)} \subseteq \sigma_p(M^*_z).$
Finally, note that the maximal ball contained in the domain of
convergence of the above series is precisely $\mathbb B_{r(M_z)}.$

(3): This is clear from the proof of (2) and the spherical symmetry of $\sigma_p(M^*_z).$

(4):
First of all, it follows from \eqref{id} that
\[
m_\infty(T) = \lim_{k\to\infty} \inf_{\|h\|=1} \|T_\de^k h\|=i(M_z)
\]
(see \eqref{m-infty}). By Lemma~\ref{lem3.5n},
the open ball $\mathbb B_{i(M_z)}$ is disjoint from $\sigma_{ap}(T)$.
Since the approximate point-spectrum of $M_z$ is contained in the
Taylor spectrum, it follows that $\sigma_{ap}(T) \subset \overline{A}_{i(M_z), R(M_z)}$.

To prove the converse inclusion, consider the bounded linear operator $S_1:=M_{z_1}|_{\mathcal M},$
where $\mathcal M$ is the closure of $\mathbb C[z_1]$ in
$H^2(\beta).$
It is a one-variable weighted shift in the basis $\{z_1^j/\|z_1^j\|\}$, and
we can apply to it \cite[Theorem 1]{R}.
We get $\sigma_{ap}(S_1)=\{\alpha
\in \mathbb C : i(S_1) \leq |\alpha| \leq R(S_1)\},$ where $R(S_1),
i(S_1)$ are given by
\beqn
R(S_1)= \lim_{j \rar \infty} \sup_{k \geq
0} \Big( \frac{\bbeta_{k+j+1}}{\bbeta_{k+1}} \Big)^{\frac 1j}\,
\bigg( \frac{(k+2) \cdots (k+j+1)}{(k+m+1) \cdots
(k+j+m)}\bigg)^{\frac 1{2j}}
\eeqn
\beqn
i(S_1)=
\lim_{j \rar \infty}
\inf_{k \geq 0}
\Big(
\frac{\bbeta_{k+j+1}}{\bbeta_{k+1}}
\Big)^{\frac 1j}\,
\bigg(
\frac{(k+2) \cdots
(k+j+1)}{(k+m+1) \cdots (k+j+m)}\bigg)^{\frac 1{2j}}.
\eeqn
By Lemma \ref{lem4.3},
$i(S_1)=i(M_z)$ and $R(S_1)=R(M_z).$

The proof is divided into two cases:
% \marginpar{\bf Change-13 Proof of (4)\\ Rewritten}

$i(M_z) = R(M_z)$: If this happens then by the preceding discussion,
$i(S_1)=i(M_z)=R(M_z)=R(S_1).$ In particular, $\{w \in \mathbb C :
|w|=R(M_z)\}=\sigma_{ap}(S_1) \subseteq \sigma_{ap}(M_{z_1}).$ Now
by the projection property for the approximate point-spectrum
\cite[Pg. 18]{Cu}, the projection of $\sigma_{ap}(M_z)$ onto the
$z_1$-axis is precisely $\sigma_{ap}(M_{z_1}).$ Since $R(M_z) \in
\sigma_{ap}(M_{z_1}),$ there exists $w_2, \cdots, w_m \in \mathbb C$
such that $(R(M_z), w_2, \cdots, w_m) \in \sigma_{ap}(M_z).$ By (1)
above, $\sigma_{ap}(M_z) \subseteq \sigma(M_z) =
\overline{B}_{R(M_z)}.$ It follows that $w_2=\cdots = w_m=0,$ and
hence $(R(M_z), 0, \cdots, 0) \in \sigma_{ap}(M_z).$ Since
$\sigma_{ap}(M_z)$ has spherical symmetry, it contains the
degenerate ball-shell $\overline{A}_{i(M_z), R(M_z)}$.

$i(M_z) < R(M_z)$: Since the approximate point-spectrum is always
closed, it suffices to check that $A_{i(M_z), R(M_z)} \subseteq
\sigma_{ap}(M_z).$ Let $w \in A_{i(M_z), R(M_z)}.$ By the spherical
symmetry of $\sigma_{ap}(M_z),$ we may take $w$ of the form $(\|w\|,
0, \cdots, 0).$ We adapt the argument of \cite[Theorem 1]{R} to the
present situation. Choose numbers $a, b$ such that $i(M_z) < a <
\|w\| < b < R(M_z).$ Let $\epsilon
> 0$ be given. Choose positive integers $n, k$ such that $(\|w\|/b)^n <
\epsilon,$ $1/k < \epsilon$ and $\|z^{n+k+1}_1\|/\|z^{k+1}_1\| >
b^n$.  We further choose positive integers $p$ and $q$ such that
$(a/\|w\|)^p < \epsilon,$ $q > n+k$ and
$\|z^{p+q+1}_1\|/\|z^{q+1}_1\| < a^p.$ Now the argument in the
(\cite{R}, Proof of Theorem 1) actually yields
$\|(S_1-\|w\|I)f\| < \epsilon \|S_1\|\|f\|$, where $f \in \mathcal M$ is given by
%\beqn
%f(z_1)=\sum_{l=k+1}^{p +
%q+1}\frac{\|z^l_1\|}{\|z^{k+1}_1\|}\frac{1}{\|w\|^{l-k-1}}\frac{z^l_1}{\|z^l_1\|}.
%\eeqn
\beqn
f(z_1)=\sum_{l=k+1}^{p + q+1}
\,
\|w\|^{k+1 - l} z^l_1 .
\eeqn
Set $g(z)=f(z_1),$ and note that $g \in H^2(\beta)$ and
$\|(M_{z_1}-\|w\|I)g\| < \epsilon \|M_{z_1}\|\|g\|.$ Further, since $1/k
< \epsilon,$ we have $\|M_{z_j}g\| < \epsilon \|M_{z_1}\|\|g\|$ for
$j=2, \cdots, m$. Since $\epsilon>0$ is arbitrary, $w\in \sigma_{ap}(T)$.

(5): The assertion about the essential spectrum is already obtained
in \cite[Theorem 4.5(2)]{G-H-X}. Alternatively, it may be deduced
from \cite[Lemma 4.7]{G-H-X}, Remark \ref{le4.5} and Lemma \ref{Pro4.1ess}.
\end{proof}

% \textbf{TO ERASE ?} If $M_z$ on $H^2(\beta)$ is spherical then  by Lemma
% \ref{Pro4.1}(2), $\sigma_{ap}(M_z) \cap \mathbb B_{m(M_z)} =
% \emptyset,$ where $m(M_z)=\inf_{k \geq
% 0}\frac{\beta_{k+1}}{\beta_k}.$
%
% \begin{question}
% Is it true that
% \marginpar{Answered?}
% the approximate point-spectrum of a spherical $m$-shift $M_z$ is contained in the closed
% ball shell $\overline{A}_{i(M_z), R(M_z)}$?
% \end{question}

We remark that for an essentially commuting tuple $T$ satisfying
$\sigma_e(T)=\partial \mathbb B$, the $C^*$ algebra it generates
can be described using the results of \cite{G}.

Let $T$ be an essentially normal, spherical $m$-tuple.
% \marginpar{\bf Change-14 \\ New Paragraph \\ New Question}
It follows from Lemma \ref{Pro4.1ess} that the essential spectrum of $T$ is connected if and only if the essential
spectrum of $\sum_{j=1}^m T^*_j T_j$ is connected. If in addition,
$T$ is a multi-shift, then this always happens as seen above. In view
of this, it is interesting to note that there exists a jointly
hyponormal $2$-shift with disconnected essential spectrum
\cite[Theorem 2.5]{Cu-Yo}.

We close the section with the following question.

\begin{question}
Calculate the essential spectrum of any spherical multiplication $m$-tuple $M_z$.
Is it always connected?
\end{question}

As it is shown in \cite[Example 3.7.7]{LauNeu-book}, one always has
$\sigma_e(M_z)=\sigma_{ap}(M_z)=\bar A_{i(M_z), R(M_z)}$ if $M_z=M_{z_1}$
is a $1$-tuple.

\section{The Membership of Cross-commutators in the Schatten Classes}

\label{sec:4n}

% DELETED 03/04/2013  \section{Balanced Tuples}:
% \input balanced_tuples.tex

In this section, we discuss the so-called $p$-essential normality of spherical tuples.
Recall that an $m$-tuple $T$ of commuting bounded linear operators $T_1, \cdots, T_m$ is {\it $p$-essentially normal} if the cross-commutators $[T^*_i, T_l]$ belong to the Schatten $p$-class for all $j, l=1, \cdots, m.$
As before, we put $\de_k=\bbeta_{k+1}/\bbeta_k$, $k\in \mathbb{N}$.
Throughout this section, we assume that $m \geq 2.$

\begin{remark}
\label{rem-compact}
An $m$-variable weighted shift $T : \{w^{(i)}_n\}$
is compact if and only if $\lim_{|n| \rar \infty}w^{(i)}_n = 0$ for
% \marginpar{Change: self-commutators $\to$ cross-commutators in the section name}
all indices $i=1, \dots, m$ \cite[Proposition 6]{J-L}. It follows that a spherical $m$-variable weighted shift
is compact if and only if $\lim_{k\rar \infty} \dt_k=0$.
\end{remark}

%\begin{proof} This is recorded in \cite[Corollary 4.4]{G-H-X}.
%Both statements (\ref{1}) and (\ref{2}) are immediate from the equation
%$\|T_ie_n\|^2 - \|T^*_{i}e_n\|^2
%%
%%
%\newline
%%
%%
%=\big(\delta^2_{|n|}- \delta^2_{|n|-1}\big)\|U_ie_n\|^2 +
% \delta^2_{|n|-1}\left(\|U_ie_n\|^2 - \|U^*_{i}e_n\|^2\right).$
%\end{proof}

% \begin{corollary}
% \label{Cor3.11} Let $M_z$ be a bounded spherical multiplication
% $m$-tuple in $H^2(\beta)$, so that the norm in $H^2(\beta)$ is given
% \marginpar{\bf To delete}
% by \eqref{H2-beta-L2B} for a certain sequence $\bbeta_0, \bbeta_1,
% \bbeta_2, \cdots, $ of positive numbers.
% If $\lim_{k \rar \infty} \frac{\bbeta_{k+1}}{\bbeta_k} -
% \frac{\bbeta_{k}}{\bbeta_{k-1}}=0$ then $M_z$ is essentially normal.
% \end{corollary}
% \begin{proof}
% Recall that $\delta_k=\bbeta_{k+1}/\bbeta_{k}$ (Remark \ref{rk4.1})
% and that the joint isometry $U$ in the polar decomposition of $M_z$
% is the Szeg\"o $m$-shift (Remark \ref{rk4.3}). Since the Szeg\"o
% $m$-shift is essentially normal, the essential normality of $M_z$
% follows from Proposition \ref{le4.5}(1).
% %(2): The desired conclusions follow from Lemma \ref{ess-nor Sz} and Proposition
% %\ref{Eq-Lp-1}.
% \end{proof}

The main part of this Section is devoted to the proof of the following criterion of when
the cross-commutators of a spherical weighted shift
belong to the Schatten class $\mathcal S^p$.

\begin{theorem} \label{Thm_Sp}
Let $M_z$ be a bounded spherical multiplication
$m$-tuple in $H^2(\beta)$, so that the norm in $H^2(\beta)$ is given
by \eqref{H2-beta-L2B} for a certain sequence $\bbeta_0, \bbeta_1,
\bbeta_2, \cdots, $ of positive numbers. Let $1\le p \leq \infty$.
Then the following statements are equivalent:
\smallskip

\textbf{(1)} The self-commutators $[M^*_{z_j}, M_{z_j}]$
belong to the Schatten class $\mathcal S^p$ for all $j$, $1\le j\le m$;

\textbf{(2)} The cross-commutators $[M^*_{z_j}, M_{z_l}]$
belong to the Schatten class $\mathcal S^p$ for all indices $j,l$;

\textbf{(3)}
\vskip-.5cm
%
% OLD
%
%\bq
%\label{expr-Sp}
%\sum_{k=1}^{\infty}
%\frac{\bbeta^{2p}_{k+1}}{\bbeta^{2p}_{k}}
%\;
%k^{m-p-1}
%+
%\sum_{k=1}^{\infty} \Big|
%\frac{\bbeta^{2p}_{k+1}}{\bbeta^{2p}_{k}} -
%\frac{\bbeta^{2p}_{k}}{\bbeta^{2p}_{k-1}} \Big|^p \, k^{m-1}< \infty.
%\eq
%
\bq
\label{expr-Sp}
\sum_{k=1}^{\infty}
\de_k^{2p}\,
k^{m-p-1}
+
\sum_{k=1}^{\infty} \big|
\de_k^2 - \de_{k-1}^2
 \big|^p \, k^{m-1}< \infty.
\eq
\end{theorem}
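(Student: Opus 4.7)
The plan is to compute the action of each self- and cross-commutator on the orthonormal basis $\{e_n\}_{n\in \mathbb N^m}$ of $H^2(\beta)$ and to read off the Schatten $\mathcal{S}^p$-norms from explicit sums. Using $w^{(i)}_n = \de_{|n|}\sqrt{(n_i+1)/(|n|+m)}$ from Theorem~\ref{th:w-sh}, a direct calculation shows that $[M^*_{z_j},M_{z_j}]$ is diagonal in this basis, with eigenvalue at $e_n$ (writing $k=|n|$) equal to
\[
\mu^{(j)}_n \;=\; P_k + Q_k\, n_j,\qquad P_k := \frac{\de_k^2}{k+m},\qquad Q_k := \frac{\de_k^2}{k+m}-\frac{\de_{k-1}^2}{k+m-1},
\]
under the convention $\de_{-1}=0$. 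For $j\neq l$, $[M^*_{z_j},M_{z_l}]$ is a generalized weighted shift sending $e_n$ (with $n_j\ge 1$) to $c_n\, e_{n+\eps_l-\eps_j}$, where $c_n=w^{(j)}_{n-\eps_j}\bigl(w^{(l)}_n-w^{(l)}_{n-\eps_j}\bigr)$; its singular values are therefore exactly $\{|c_n|:n_j\ge 1\}$.

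The crux of the argument is the following two-sided combinatorial estimate, valid for all $P,Q\in\mathbb{R}$ and $k\ge 1$, with constants depending only on $m$ and $p$:
\[
\sum_{a=0}^{k}\binom{k-a+m-2}{m-2}\,|P+Qa|^p \;\asymp\; k^{m-1}\bigl(|P|^p+|Qk|^p\bigr).
\]
The upper bound is immediate from $|P+Qa|^p\le 2^{p-1}(|P|^p+|Qk|^p)$ together with $\sum_a\binom{k-a+m-2}{m-2}=\binom{k+m-1}{m-1}\sim k^{m-1}$. The lower bound is obtained by a case analysis on the location of $a_\ast:=-P/Q$: when $a_\ast\ge k/4$ the range $a\in[0,k/8]$ carries binomial weight $\gtrsim k^{m-1}$ and satisfies $|P+Qa|\gtrsim |P|$, whereas when $a_\ast< k/4$ the range $a\in[3k/4,k]$ likewise carries weight $\gtrsim k^{m-1}$ with $|P+Qa|\gtrsim |Qk|$. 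Since $\max(|P|,|Qk|)^p\asymp |P|^p+|Qk|^p$, this yields the lower bound.

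With this estimate, $\|[M^*_{z_j},M_{z_j}]\|_{\mathcal{S}^p}^p\asymp\sum_k k^{m-1}(|P_k|^p+|Q_k k|^p)$, and one has $k^{m-1}|P_k|^p\sim \de_k^{2p}k^{m-p-1}$. The identity $(k+m)Q_k=(\de_k^2-\de_{k-1}^2)-\de_{k-1}^2/(k+m-1)$ allows one to exchange $|kQ_k|^p$ and $|\de_k^2-\de_{k-1}^2|^p$ up to an error of order $\de_{k-1}^{2p}/k^p$, whose series is absorbed into the first sum of \eqref{expr-Sp}. This proves (1)$\Leftrightarrow$(3). For (3)$\Rightarrow$(2), set $\Delta_k:=\de_k/\sqrt{k+m}-\de_{k-1}/\sqrt{k+m-1}$, so that $|c_n|^2=\de_{k-1}^2\, n_j(n_l+1)\, \Delta_k^2/(k+m-1)$. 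Rationalizing gives $|\Delta_k|\lesssim |\de_k-\de_{k-1}|/\sqrt{k}+\de_k/k^{3/2}$; bounding $\sum_{|n|=k,\,n_j\ge 1}[n_j(n_l+1)]^{p/2}$ by a multiple of $k^{m-1+p}$ and using $|\de_k-\de_{k-1}|\,\de_{k-1}\le |\de_k^2-\de_{k-1}^2|$ shows that $\|[M^*_{z_j},M_{z_l}]\|_{\mathcal{S}^p}^p$ is bounded above by a constant times the sum in \eqref{expr-Sp}. Since (2)$\Rightarrow$(1) is trivial, the chain of implications closes.

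The principal obstacle is the lower bound in the combinatorial estimate: the terms $P$ and $Qa$ may have opposite signs, so $|P+Qa|$ can vanish at an interior $a_\ast\in(0,k)$, and one must, for \emph{any} position of $a_\ast$, exhibit a subset of $\{0,\ldots,k\}$ carrying a substantial share of the binomial weight on which $|P+Qa|$ remains comparable to $\max(|P|,|Qk|)$.
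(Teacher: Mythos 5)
Your argument is, in structure, the same as the paper's: compute the matrix elements of the commutators in the monomial basis, reduce the self-commutator case to a two-sided estimate of $\sum_{|n|=k}\binom{k-n_j+m-2}{m-2}|P+Qn_j|^p\asymp k^{m-1}(|P|^p+|Qk|^p)$ (this is exactly the paper's Lemma~\ref{lem-sum}, and your case analysis on $a_\ast=-P/Q$ is a correct, if slightly different, way to get the lower bound), and then trade $|kQ_k|^p$ for $|\de_k^2-\de_{k-1}^2|^p$ up to an error absorbed by the first series, which is the paper's estimate \eqref{two-s}. The diagonal eigenvalue $P_k+Q_kn_j$ agrees with \eqref{comm_sph}, and the equivalence $(1)\Leftrightarrow(3)$ is sound.

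There is, however, a computational error in your treatment of the cross-commutators. The coefficient of $[M_{z_j}^*,M_{z_l}]e_n$ on $e_{n+\eps_l-\eps_j}$ is $w^{(l)}_n w^{(j)}_{n+\eps_l-\eps_j}-w^{(j)}_{n-\eps_j}w^{(l)}_{n-\eps_j}$, which with the spherical weights equals $\sqrt{n_j(n_l+1)}\bigl[\tfrac{\de_k^2}{k+m}-\tfrac{\de_{k-1}^2}{k+m-1}\bigr]$ (the paper's \eqref{comm_sph2}); it is \emph{not} $w^{(j)}_{n-\eps_j}\bigl(w^{(l)}_n-w^{(l)}_{n-\eps_j}\bigr)$, since that would require $w^{(j)}_{n-\eps_j}=w^{(j)}_{n+\eps_l-\eps_j}$, i.e.\ $\de_{k-1}/\sqrt{k+m-1}=\de_k/\sqrt{k+m}$. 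Your expression equals $\sqrt{n_j(n_l+1)}\,\Delta_k\cdot\de_{k-1}/\sqrt{k+m-1}$, whereas the true coefficient is $\sqrt{n_j(n_l+1)}\,\Delta_k\cdot\bigl(\de_k/\sqrt{k+m}+\de_{k-1}/\sqrt{k+m-1}\bigr)$; the missing factor can be large when $\de_{k-1}\ll\de_k$, so the bound you prove for your $c_n$ does not, as written, bound the actual Schatten norm. The repair is immediate and needs no rationalization: bound the true coefficient directly by $\sqrt{n_j(n_l+1)}\bigl(\tfrac{|\de_k^2-\de_{k-1}^2|}{k+m}+\tfrac{\de_{k-1}^2}{(k+m)(k+m-1)}\bigr)$ via \eqref{two-s} and combine with your estimate $\sum_{|n|=k,\,n_j\ge1}[n_j(n_l+1)]^{p/2}\lesssim k^{m-1+p}$; this yields exactly the two series in \eqref{expr-Sp}. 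With that one correction the proof is complete and coincides with the paper's.
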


We refer to \cite{Kapt} for some related results, such as the membership
of $I-\sum M_{z_j}^*M_{z_j}$ in classes $S_p$, see Proposition 6.9 and other results
in Section 6 of the cited work.

The following notation will be used. We will say that two quantities $F_k$, $G_k$, depending on
$k\in \mathbb N$, are comparable, and write $F_k  \approx G_k$ $(k\to \infty)$
if there exist positive constants $A$, $B$, $k_0$ (that may depend on $m$ and $p$)
such that $A G_k \le  F_k \le B G_k$ for all $k\ge k_0$.

\begin{lemma} \label{Eq-Lp-2}
Let $1\le p<\infty$. Then for  $j\ne l$,
$$
\summ
{n \in \mathbb N^{m},}
{|n|=k,  \, n_j>0}
n_j^{p/2} n_l^{p/2}
\approx k^{p+m-1}
\enspace \text{as $k\to\infty$.}
$$
\end{lemma}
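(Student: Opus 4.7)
The goal is a two-sided bound of order $k^{p+m-1}$, so I would split the proof into an easy upper bound and a slightly more careful lower bound, with no loss of generality taking $j=1$, $l=2$.

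For the upper bound, for any $n\in\mathbb N^m$ with $|n|=k$ one has $n_1,n_2\le k$, hence $n_1^{p/2}n_2^{p/2}\le k^p$. Since the number of $n\in\mathbb N^m$ with $|n|=k$ equals $\binom{k+m-1}{m-1}=O(k^{m-1})$, the full sum is bounded by $C_m\, k^{p+m-1}$.

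For the lower bound the idea is that a positive fraction of the lattice points on the simplex $\{|n|=k\}$ have both $n_1$ and $n_2$ comparable to $k$. Concretely, I would restrict the sum to
\[
\mathcal{A}_k=\bigl\{n\in\mathbb N^m:\ |n|=k,\ \lfloor k/(4m)\rfloor\le n_1,n_2\le \lfloor k/(2m)\rfloor\bigr\}.
\]
On $\mathcal{A}_k$, $n_1^{p/2}n_2^{p/2}\ge c_{m,p}\,k^p$. To count $|\mathcal{A}_k|$, when $m\ge 3$, for each admissible pair $(n_1,n_2)$, which contributes a factor of order $k^2$, the remaining coordinates $(n_3,\dots,n_m)$ are nonnegative integers summing to $k-n_1-n_2\in[k(1-1/m),\,k(1-1/(2m))]$, and the number of such tuples equals $\binom{k-n_1-n_2+m-3}{m-3}$, which is $\gtrsim k^{m-3}$. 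Multiplying gives $|\mathcal{A}_k|\gtrsim k^{m-1}$, hence the restricted sum is $\gtrsim k^p\cdot k^{m-1}=k^{p+m-1}$. The case $m=2$ must be treated separately since there are no remaining coordinates: for $n_1\in[\lfloor k/4\rfloor,\lfloor 3k/4\rfloor]$, $n_2=k-n_1$ is automatically in the same range, and the sum is bounded below by $\gtrsim k\cdot k^p=k^{p+1}$, matching $k^{p+m-1}$.

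\textbf{Main obstacle.} There is no substantive obstacle; the only thing to keep in mind is the separate handling of $m=2$, where the ``residual'' simplex is a point. A slicker unified treatment would be to recognize the sum as a Riemann sum for the Dirichlet integral $\int_{\Delta_m}x_1^{p/2}x_2^{p/2}\,d\mathcal{H}^{m-1}$ over the standard $(m-1)$-simplex after the scaling $n_i=kx_i$, which is finite and positive; this produces both bounds simultaneously, but the elementary bracketing above is shorter and suffices.
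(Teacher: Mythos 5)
Your proof is correct, but it follows a different route from the paper's. The paper first symmetrizes over the indices $2,\dots,m$, writing the sum as $\frac{1}{m-1}\sum_{r=2}^m\sum n_1^{p/2}n_r^{p/2}$ and using the equivalence $\sum_{r\ge 2} n_r^{p/2}\approx\bigl(\sum_{r\ge 2}n_r\bigr)^{p/2}$ to replace $n_2^{p/2}$ by $(k-n_1)^{p/2}$; it then collapses the multi-index sum into a single sum $\sum_{j=1}^k\binom{k-j+m-2}{k-j}j^{p/2}(k-j)^{p/2}\approx\sum_j j^{p/2}(k-j)^{m-2+p/2}$ and identifies this as a Riemann sum for $k^{p+m-1}\int_0^1 x^{p/2}(1-x)^{m-2+p/2}\,dx$ — essentially the Dirichlet-integral idea you sketch at the end. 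Your bracketing argument (trivial upper bound $k^p\cdot\#\{|n|=k\}$, plus a lower bound from the bulk set $\mathcal A_k$ where $n_1,n_2\asymp k$) is more elementary and avoids the symmetrization step, at the modest cost of the separate $m=2$ case and of not producing the sharp asymptotic constant, which neither the lemma nor its use in Theorem \ref{Thm_Sp} requires. Both arguments are complete and yield the stated two-sided estimate.
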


\begin{proof}
By symmetry, it suffices to consider the case $j=1$, $l=2$.
One has
$$
\summ
{n \in \mathbb N^{m},}
{|n|=k,  \, n_1>0}
n_1^{p/2} n_2^{p/2}
= \frac 1{m-1} \;
\sum_{r=2}^m \;
\summ
{n \in \mathbb N^{m},}
{|n|=k,  \, n_1>0}
n_1^{p/2} n_r^{p/2}
\approx
\summ
{n \in \mathbb N^{m},}
{|n|=k,  \, n_1>0}
n_1^{p/2}
\;
\Big(\sum_{r=2}^m  n_r\Big)^{p/2}.
$$
Hence
\begin{align*}
\summ
{n \in \mathbb N^{m},}
{|n|=k,  \, n_1>0}
n_1^{p/2} n_2^{p/2}
& \approx
\sum_{|n|=k,  \, n_1>0}
n_1^{p/2}(k-n_1)^{p/2}
=
\sum_{j=1}^{k}
{\textstyle  {k-j+m-2
\choose k-j}}\; j^{p/2} (k-j)^{p/2} \\
& \approx
\sum_{j=1}^{k}
j^{p/2} (k-j)^{m-2+(p/2)}
\approx k^{p+m-1} \int_0^1 x^{p/2}(1-x)^{m-2+(p/2)}\, dx,
\end{align*}
which gives the assertion of the Lemma.
\end{proof}

\begin{lemma} \label{lem-sum}
Let $1\le p <+\infty$. For any $j$, $1\le j\le m$, and any $s\in \mathbb R$, one has
\bq
\label{eq-sum}
\sum_{n\in \mathbb N^m,\, |n|=k}
\big|sn_j-1\big|^p \approx
k^{p+m-1}\, |s|^p + k^{m-1},
\eq
where the constants involved in the relation $\approx$ can depend on $p, m$ but not on $k$ and $s$.
\end{lemma}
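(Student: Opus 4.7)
The plan is to reduce to a one-variable sum and split into cases by the size of $|s|k$. By symmetry in $j$, it suffices to treat $j=1$; grouping terms by the value of $n_1$,
\[
\sum_{|n|=k}|sn_1-1|^p = \sum_{i=0}^{k}\binom{k-i+m-2}{k-i}|si-1|^p.
\]
For the upper bound, I would apply the elementary inequality $|si-1|^p \le 2^p(|s|^p i^p+1)$ and use the two asymptotics $\sum_{i=0}^k \binom{k-i+m-2}{k-i} = \binom{k+m-1}{m-1} \approx k^{m-1}$ and $\sum_{i=0}^k \binom{k-i+m-2}{k-i}\,i^p \approx k^{p+m-1}$; the latter follows from exactly the Beta-function calculation used in the proof of Lemma \ref{Eq-Lp-2}. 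Summing yields the upper bound of the required order $|s|^p k^{p+m-1}+k^{m-1}$.

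For the lower bound, I would distinguish cases. If $|s|k\le 1/2$ (in particular $s=0$), then $|si-1|\ge 1/2$ for all $0 \le i \le k$, so the full sum is at least a constant times $k^{m-1}$, which matches the right-hand side since $|s|^p k^{p+m-1}\le 2^{-p}k^{m-1}$. If $s<0$ and $|s|k>1/2$, then the identity $|si-1|^p=(|s|i+1)^p$ gives the termwise bounds $|si-1|^p\ge |s|^p i^p$ and $|si-1|^p\ge 1$, from which the desired lower bound is immediate via the two asymptotics above. In the remaining case $s>0$ with $sk>1/2$, I would split further: if $1/2<sk\le 4$, then restricting to $i\le 1/(4s)\in[k/16,k/2)$ yields a subset of size of order $k^{m-1}$ of multi-indices on which $|si-1|\ge 3/4$, matching the right-hand side (which is itself of order $k^{m-1}$ in this regime); if $sk>4$, then restricting to $i\in[\lceil k/2 \rceil,k]$ gives $si\ge 2$, hence $|si-1|\ge si/2\ge sk/4$, and the hockey-stick identity $\sum_{j=0}^{\lfloor k/2 \rfloor}\binom{j+m-2}{j}=\binom{\lfloor k/2 \rfloor+m-1}{m-1}\approx k^{m-1}$ produces a lower bound of order $s^p k^{p+m-1}$, which dominates $k^{m-1}$ since $sk>4$.

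The main obstacle is the delicate cancellation when $s>0$: the expression $sn_j-1$ can vanish for $n_j$ close to $1/s$, so one cannot simply discard the $-1$. The case analysis by the value of $sk$ is essentially forced by the intermediate regime $sk\in(1/2,4]$, in which neither term on the right-hand side clearly dominates and the integer structure of the multi-indices has to be exploited. All other estimates reduce to the Beta-function asymptotic that already appeared in Lemma \ref{Eq-Lp-2}, and the constants produced at each step depend only on $p$ and $m$, as required.
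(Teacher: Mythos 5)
Your proof is correct and follows essentially the same route as the paper: reduce to the one-variable sum $\sum_{l}\binom{k-l+m-2}{m-2}|sl-1|^p$, get the upper bound from $|sl-1|^p\lesssim |s|^pl^p+1$, and get the lower bound by restricting to a subinterval of $[0,k]$ kept away from the zero of $sl-1$. The only difference is organizational: the paper handles the lower bound with a single inequality $|sl-1|\ge(1+|s|l)/2$ on one interval $I(s)$ chosen according to whether $|s|k\ge 6$, which yields both terms of the right-hand side at once, whereas you split into four cases; both versions are sound.
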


\begin{proof}
Denoting $l=n_j$,
we get
$$
\sum_{n\in \mathbb N^m,\, |n|=k}
\big|sn_j-1\big|^p
=
\sum_{l=0}^k
\textstyle{ k-l+m-2 \choose m-2}
\displaystyle
|sl-1|^p
\approx
\sum_{l=0}^k
(k-l+1)^{m-2}|sl-1|^p.
$$
So the estimate in one direction is trivial:
$$
\sum_{n\in \mathbb N^m,\, |n|=k}
\big|sn_j-1\big|^p
\le
\sum_{l=0}^k
(k+1)^{m-2}|sl-1|^p
\le C
\sum_{l=0}^k
(k+1)^{m-2}(|sl|^p+1)
\le C_1
\big(k^{p+m-1}\, |s|^p + k^{m-1}\big).
$$
To prove the reverse estimate, define the interval $I(s)$ as
follows: $I(s)=[k/2, k]$ if $|s|k\ge 6$ and
$I(s)=[0, k/18]$ if $|s|k < 6$. One has
$$
|sl-1|\ge \frac{1+|s|l}2\enspace ~\text{for}\enspace l\in I(s).
$$
Indeed, in the first case
$|sl-1|\ge |sl|-1 \ge (1+|s|l)/2$ and in the second case,
$|sl-1|\ge 1-|sl| \ge (1+|s|l)/2$.
So for some positive constants $C_2$, $C_3$ one gets
\begin{multline*}
\sum_{n\in \mathbb N^m,\, |n|=k}
\big|sn_j-1\big|^p
\ge
\sum_{l\in \mathbb Z\cap I(s)}
(k-l+1)^{m-2}|sl-1|^p
\\
\displaystyle
\ge
C_2\,
\sum_{l\in \mathbb Z\cap I(s)}
(k-l+1)^{m-2}(1+|s|^pl^p)
\ge
C_3\,
\big(k^{p+m-1}\, |s|^p + k^{m-1}\big). \qquad \qedhere
\end{multline*}
\end{proof}

\begin{proof}[Proof of Theorem \ref{Thm_Sp}]
We will use the following formulas, which are easy to deduce
from \eqref{w-i-n}.

Let $1\le j\le m$.
% \marginpar{\bf Second case in \eqref{comm_sph} corrected}
Then
\bq
\label{comm_sph}
[T^*_j, T_j]\, e_n=
\begin{cases}
\Big[\frac {(n_j+1)\dt_{|n|}^2}{|n|+m}-
\frac {n_j\dt_{|n|-1}^2}{|n|+m-1}\Big]
e_n, \quad n_j >0, \\
\frac {\dt_{|n|}^2}{|n|+m}\; e_n
, \qquad \qquad \qquad \qquad n_j =0.
\end{cases}
\eq
If $1\le j , l\le m$ and $j\ne l$, then
\bq
\label{comm_sph2}
[T^*_j, T_l]\, e_n=
\begin{cases}
\sqrt{n_j(n_l+1)}\;
\bigg[
\frac {\dt_{|n|}^2}{|n|+m}-
\frac {\dt_{|n|-1}^2}{|n|+m-1}
\bigg] e_{n+\eps_l-\eps_j}, \quad n_j >0, \\
\;0, \qquad \qquad \qquad \quad \qquad \qquad \qquad \qquad \qquad \enspace  n_j =0.
\end{cases}
\eq
By symmetry, to prove that \textbf{(1)} is equivalent to \textbf{(3)},
it suffices to give two-sided estimates of the self-commutator
$[T_m^*, T_m]$.
Equation \eqref{comm_sph} gives
$$
\begin{aligned}
\big\|[T_m^*, T_m]\big\|_{\mathcal S^p}^p & =
\sum_{k=1}^\infty
\dm{
\summ
{n \in \mathbb N^{m},}
{|n|=k,  \, n_m>0}
\;
\bigg|
\frac
{(n_m+1) \dt_k^2} {k+m}
 -
\frac {n_m \dt_{k-1}^2} {k+m-1}
\bigg|^p }
+
\sum_{n \in \mathbb N^{m},\; n_m=0}\,
\frac
{\dt_{|n|}^{2p}}
{(|n|+m)^p}    \\
& \approx
\sum_{k=1}^\infty
\dm{
\summ
{n \in \mathbb N^{m},}
{|n|=k,  \, n_m>0}
\;
\bigg|
n_m\,
\Big(
\frac {\dt_{k}^2}{k+m} - \frac{\dt_{k-1}^2}{k+m-1}
\Big)+ \frac {\dt_{k}^2}{k+m}
\bigg|^p
}
+
\sum_{k=0}^\infty
(k+1)^{m-2-p} \,
\dt_k^{2p}\, .
\end{aligned}
$$
By substituting $s/t$ for $s$ in \eqref{eq-sum}, one gets
\[
\sum_{n\in \mathbb N^m,\, |n|=k}
\big|n_m s-t\big|^p \approx
k^{p+m-1}\, |s|^p + k^{m-1} |t|^p
\]
(where the constants involved in the relation $\approx$ do not depend on
$s, t\in \mathbb R$ and $k$).
It follows that
\[
\big\|[T_m^*, T_m]\big\|_{\mathcal S^p}^p \approx
\sum_{k=1}^\infty
\Big|
\frac {\dt_{k}^2}{k+m} - \frac{\dt_{k-1}^2}{k+m-1}
\Big|^p \,
k^{p+m-1}
+
\sum_{k=0}^\infty
\frac
{\dt_k^{2p}} {(k+m)^p} \, k^{m-1}\, .
\]
Now, by applying the two-sided estimate
\begin{multline}
\label{two-s}
C_1\bigg(
\frac {\big|\dt_{k}^2-\dt_{k-1}^2\big|^p}{(k+m)^p} \,
- \,\frac{\dt_{k-1}^{2p}}{(k+m)^p(k+m-1)^p}
\bigg)
\le
\Big|
\frac {\dt_{k}^2}{k+m} \,-\, \frac{\dt_{k-1}^2}{k+m-1}
\Big|^p  \\
\le
C_2\bigg(
\frac {\big|\dt_{k}^2-\dt_{k-1}^2\big|^p}{(k+m)^p}
\, + \,\frac{\dt_{k-1}^{2p}}{(k+m)^p(k+m-1)^p}\bigg)\;,
\end{multline}
where $C_1$ and $C_2$ are positive constants,
it is easy to see that
the relation $\big\|[T_m^*, T_m]\big\|_{\mathcal S^p}<\infty$
is equivalent to \textbf{(3)}.

It is obvious that \textbf{(2)} implies \textbf{(1)}, so it only remains to prove
that \textbf{(3)} implies that the commutators
$[T_j^*, T_l]$ belong to $\mathcal S_p$ whenever the indices $j$ and $l$
are distinct. This follows easily from \eqref{comm_sph2}, Lemma \ref{Eq-Lp-2}
and \eqref{two-s}.
\end{proof}

Next we derive several consequences of Theorem~\ref{Thm_Sp}, which
are motivated by the so-called
% \marginpar{To put here the Question \ref{q4.8}?}
cut-off phenomenon in the Berger-Shaw
theory \cite[ Proposition 5.3]{Ar}, \cite[Proposition 3]{D-V},
\cite[Theorem 1.1]{F-X} (refer to \cite{Y} for a detailed account of
this phenomenon).

\begin{corollary}
\label{cor-comp-Sp}
Let $M_z$ be a bounded spherical multiplication $m$-tuple on $H^2(\be)$.
If all commutators $[M_{z_j}^*, M_{z_k}]$ belong to $S_p$, where
$1\le p <\infty$, then either the operators $M_{z_j}$ are compact or $p>m$.
\end{corollary}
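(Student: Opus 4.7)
The plan is to reduce the corollary to Theorem~\ref{Thm_Sp} by contrapositive: assuming $M_z$ is not compact and $1 \le p \le m$, I will show that one of the two series in \eqref{expr-Sp} must diverge. By Remark~\ref{rem-compact}, non-compactness of a spherical $m$-shift is equivalent to $\de_k \not\to 0$, i.e.\ $\limsup_k \de_k > 0$, so there exist $c>0$ and infinitely many indices $a$ with $\de_a \ge c$. The argument splits into two cases, with $p=m$ being the principal obstacle.

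For $1\le p < m$, the plan is first to extract that $\de_k^2$ has a limit $L\ge 0$, and then to show $L=0$. The second sum in \eqref{expr-Sp} controls the variation of $\de_k^2$: when $p=1$ this is immediate, since $k^{m-1}\ge 1$; when $1<p<m$, H\"older's inequality with exponents $p$ and $p/(p-1)$ yields
\[
\sum_{k\ge a}|\de_k^2-\de_{k-1}^2|
\le \Big(\sum_{k\ge a}|\de_k^2-\de_{k-1}^2|^p k^{m-1}\Big)^{1/p} \Big(\sum_{k\ge a} k^{-(m-1)/(p-1)}\Big)^{(p-1)/p},
\]
where the second factor is finite because $(m-1)/(p-1)>1$ for $p<m$. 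So $\de_k^2 \to L$ for some $L\ge 0$. If $L>0$ then $\de_k^{2p}\ge (L/2)^p$ on a tail, so the first sum in \eqref{expr-Sp} dominates a constant multiple of $\sum_k k^{m-p-1}$; the latter diverges when $p<m$ (since then $m-p-1 > -1$), contradicting convergence. Hence $L=0$, which is incompatible with $\de_a \ge c$ for arbitrarily large $a$.

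The boundary case $p=m$ is the hard part: the H\"older step now produces $\sum k^{-1} = \infty$, so I only get the weaker slow-variation bound
\[
|\de_b^2-\de_a^2| \le \eta(a)\,(\log(b/a))^{(m-1)/m}, \qquad \eta(a)\to 0 \text{ as } a\to\infty,
\]
on dyadic-type windows $a \le b \le 2a$. Combining this with the persistence hypothesis $\de_a \ge c$, for $a$ sufficiently large one obtains $\de_k^2 \ge c^2/2$ throughout the window $[a,2a]$. I would then extract greedily an increasing sequence $a_1<a_2<\dots$ with $\de_{a_j}\ge c$ and $a_{j+1} > 2 a_j$, so that the windows $[a_j,2a_j]$ are pairwise disjoint. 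On each of them,
\[
\sum_{k=a_j}^{2a_j}\frac{\de_k^{2m}}{k} \ge \Big(\tfrac{c^2}{2}\Big)^{m}\sum_{k=a_j}^{2a_j}\frac{1}{k} \ge \gamma > 0
\]
for some $\gamma$ independent of $j$, and summing over $j$ forces $\sum_k \de_k^{2m}/k = \infty$, contradicting the first sum in \eqref{expr-Sp} at $p=m$.

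Thus the main difficulty is the boundary $p=m$, where slow variation alone no longer delivers Cauchyness of $\{\de_k^2\}$; the remedy is a dyadic pigeonhole on disjoint windows, which manufactures an infinite mass for $\sum_k \de_k^{2m}/k$ from the mere persistence of $\de_k$ above a positive level.
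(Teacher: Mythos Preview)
Your proof is correct and, at its core (the case $p=m$), follows exactly the same line as the paper: H\"older on the second sum in \eqref{expr-Sp} to obtain a slow-variation bound on $\de_k^2$ over dyadic windows $[a,2a]$, then use the persistence of $\de_k\ge c$ to force $\sum_k \de_k^{2m}/k=\infty$. The one notable difference is that you treat $p<m$ and $p=m$ separately, whereas the paper observes that $S_p\subset S_m$ for $p\le m$ and therefore reduces the whole statement to the single case $p=m$; your separate handling of $p<m$ is valid but unnecessary. A further minor point: in the $p=m$ step you extract disjoint windows with $a_{j+1}>2a_j$ and sum the contributions, while the paper simply notes that for infinitely many $k_j\to\infty$ the tail $\sum_{k\ge k_j}\de_k^{2m}/k$ is bounded below by a fixed positive constant, which already forces divergence---either formulation works.
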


\begin{proof}
To simplify notation, we put $\rr_k=\de_k^2$.
It suffices to show that if
all commutators $[M_{z_j}^*, M_{z_k}]$ are in $S_m$, then
$\rr_k\to 0$ as $k\to\infty$ (see Remark \ref{rem-compact}).
Suppose, to the contrary, that $[M_{z_j}^*, M_{z_k}]$ are in $S_m$,
but $\rr_k$ do not tend to zero. Then there exist an $\eps>0$ and a
sequence $k_1, k_2, \dots$ such that
$\rr_{k_j}>\eps$ for all $j\in \mathbb{N}$.
By Theorem~\ref{Thm_Sp},
\bq
\label{ineq_Sm}
\sum_k \rr_k^{m}\, k^{-1} + \sum_k | \rr_{k+1}-\rr_k |^m\, k^{m-1}<\infty,
\eq
and, as we will now show, it leads to a contradiction.
Choose $N$ so large that
\[
\sum_{l=k_N}^\infty | \rr_{l+1}-\rr_l |^m\, l^{m-1} < \Big(\frac \eps 2\Big)^m.
\]
% for $j\ge N$.
Take any $j\ge N$ and any $k$ such that $k_j\le k\le 2 k_j$.
Since $\sum_{l=k_j}^{2k_j-1} \frac 1 l \le 1$,
we get
\begin{align*}
|\rr_k-\rr_{k_j}| & \le \sum_{l=k_j}^{k-1} | \rr_{l+1}-\rr_l | \\
 & \le
\Big(
\sum_{l=k_j}^{k-1} | \rr_{l+1}-\rr_l |^m l^{m-1}
\Big)^{\frac 1 m}
\cdot
\Big(
\sum_{l=k_j}^{k-1}  \frac 1 l
\Big)^{\frac{m-1} m }
 <
\frac \eps 2 \cdot 1 = \frac \eps 2\, .
\end{align*}
Hence $\rr_k> \eps/2$ for all $k$ in the range $k_j\le k\le 2 k_j$ for all $j\ge N$.
This implies that
\[
\sum_{k=k_j}^\infty  \rr_k^{m} \,k^{-1}   \ge
\sum_{k=k_j}^{2k_j}  \rr_k^{m} \,k^{-1}   \ge \Big(\frac \eps
2\Big)^m\, \sum_{k=k_j}^{2k_j}  k^{-1} \ge \frac{1}{2}\Big(\frac
\eps 2\Big)^m
\]
for all $j\ge N$. Therefore the first sum in \eqref{ineq_Sm} diverges.

This contradiction implies that, in fact, $\rr_k$ should tend to $0$.
\end{proof}

%OLD:
%
%\begin{corollary} \label{cor:de_k-1}
%If $|\de_k-1|\le C/k$ for some constant $C$, then the
%commutators $[M_{z_j}^*, M_{z_k}]$ belong to $S_p$ if and only if $p>m$.
%\end{corollary}
%\marginpar{More general: $\de_k\sim 1 + C k^{-\al}$, $\al>0$ a constant ?}
%
%\begin{proof}
%Suppose that $|\de_k-1|\le C/k$ for all $n$. Then
%the operators $M_{z_k}$ are not compact.
%If $[M_{z_j}^*, M_{z_l}]\in S_p$ for all $j, l$, then
%by Corollary \ref{cor-comp-Sp}, $p>n$. The converse
%statement follows immediately from Theorem \ref{Thm_Sp}.
%\end{proof}

\begin{corollary}
\label{cor:de_k-1} Suppose that
 the sequence $\{\de_k\}$ does not tend to zero and
$|\de_{k+1}-\de_k|\le C/k$ for some constant $C$.
Then the
commutators $[M_{z_j}^*, M_{z_k}]$ belong to $S_p$ if and only if $p>m$.
\end{corollary}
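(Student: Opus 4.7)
The plan is to apply the characterization provided by Theorem \ref{Thm_Sp} (formula \eqref{expr-Sp}) and reduce the question to the convergence of two explicit series in the sequence $\{\delta_k\}$. Since $M_z$ is bounded, the sequence $\{\delta_k\}$ is bounded; set $M := \sup_k \delta_k < \infty$.

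For the ``if'' direction ($p>m \Rightarrow [M_{z_j}^*,M_{z_l}]\in S_p$), the first sum in \eqref{expr-Sp} is bounded by $M^{2p}\sum_{k\ge 1} k^{m-p-1}$, which converges precisely when $p>m$. For the second sum, I would use the slow-variation hypothesis to estimate
\[
|\delta_k^2-\delta_{k-1}^2| = |\delta_k-\delta_{k-1}|\cdot|\delta_k+\delta_{k-1}| \le \frac{C}{k-1}\cdot 2M,
\]
so that $|\delta_k^2-\delta_{k-1}^2|^p\, k^{m-1} = O(k^{m-1-p})$, whose sum converges exactly when $p>m$. Together these give the sufficiency.

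For the ``only if'' direction, suppose the cross-commutators lie in $S_p$ for some $1\le p<\infty$. By Corollary \ref{cor-comp-Sp}, either the $M_{z_j}$ are compact or $p>m$. But compactness of $M_{z_j}$ is equivalent, by Remark \ref{rem-compact}, to $\delta_k\to 0$, contradicting the standing hypothesis. Hence necessarily $p>m$.

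There is no real technical obstacle beyond the two ingredients already established: Theorem \ref{Thm_Sp} and Corollary \ref{cor-comp-Sp}. The only conceptual point worth highlighting is that the Lipschitz-type bound $|\delta_{k+1}-\delta_k|\le C/k$ is exactly what is needed to produce the $O(1/k)$ decay of $|\delta_k^2-\delta_{k-1}^2|$, which in turn matches the weight $k^{m-1}$ in the second sum and yields the sharp threshold $p>m$; without such a regularity assumption, oscillations in $\{\delta_k\}$ could make the second series in \eqref{expr-Sp} diverge even for bounded $\delta_k$ and $p>m$, so the hypothesis is genuinely used in the proof of the ``if'' direction.
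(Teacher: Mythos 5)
Your proof is correct and follows essentially the same route as the paper: the ``only if'' direction via Corollary \ref{cor-comp-Sp} together with Remark \ref{rem-compact}, and the ``if'' direction via the criterion \eqref{expr-Sp} of Theorem \ref{Thm_Sp}. The paper merely asserts that the ``if'' direction ``follows immediately'' from Theorem \ref{Thm_Sp}; your explicit estimates of the two series (boundedness of $\{\de_k\}$ for the first, the $O(1/k)$ bound on $|\de_k^2-\de_{k-1}^2|$ for the second) are exactly the intended computation.
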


\begin{proof}
Since $\{\de_k\}$ does not tend to zero. by Remark \ref{rem-compact}, none of $M_{z_1}, \cdots, M_{z_m}$ is compact.
%If the conditions hold, then the operators $M_{z_k}$ are not compact.
If $[M_{z_j}^*, M_{z_l}]\in S_p$ for all $j, l$, then
by Corollary \ref{cor-comp-Sp}, $p>m$. The converse
statement follows immediately from Theorem \ref{Thm_Sp}.
\end{proof}

In particular, the statement of this Corollary holds if
$|\de_k-1|\le C/k$ for some constant $C$. It can also be applied to
sequences $\bbeta_k$ like $\bbeta_k=C_1\exp(C_2 k^\alpha)$, where
$C_1, C_2$ and $\al$ are constants.

We end this section with the following question.

\begin{question}
% \marginpar{New question}
Give a characterization of all (strongly)
spherical $m$-tuples $T$ such that $\ker(D_{T^*})$ is finite-dimensional and is
cyclic for $T$, in terms of some free parameters (similarly to
Theorems ~\ref{th:w-sh} and~\ref{th:spher-tpl}). Can our results on the
calculation of parts of the spectrum and on membership
of cross-commutators $[T_j, T^*_j]$ in $S_p$ be generalized
to this subclass of spherical $m$-tuples?
\end{question}

\section{Special Classes of Spherical Tuples}

%OLD NAME:
%\section{Jointly Hyponormal and $p$-Isometry Tuples}

Recall that an $m$-tuple $S=(S_1, \cdots, S_m)$ of commuting
operators $S_i$ in ${\mathcal B}({\mathcal H})$ is {\it jointly subnormal}
if there exist a Hilbert space ${\mathcal K}$ containing ${\mathcal
H}$ and an $m$-tuple $N=(N_1, \cdots, N_m)$ of commuting normal
operators $N_i$ in ${\mathcal B}({\mathcal K})$ such that $N_ih =
S_ih$ for every $h \in {\mathcal H}$ and $1\leq i\leq m.$

An $m$-tuple $S=(S_1, \cdots, S_m)$ of commuting operators $S_i$ in
${\mathcal B}({\mathcal H})$ is {\it jointly hyponormal} if the $m
\times m$ matrix $([T^*_j, T_i])_{1 \leq i, j \leq m}$ is positive
definite, where $[A, B]$ stands for the commutator $AB-BA$ of $A$
and $B.$ It is not difficult to see that a jointly subnormal tuple is always
jointly hyponormal \cite{At-0}, \cite{Cu-1}.

%\begin{definition}
%\label{df5.1} Fix an integer $q \geq 1$. We say that $T$ is a {\it joint $q$-expansion} if
%\beq
%\label{5.26}
%B_n(Q_T)
%\mathrel{\mathop:}= \sum_{s=0}^q (-1)^{s} {q \choose s} Q^{s}_T(I) \leq 0
%\eeq
%(see \eqref{QT}, \eqref{QkT}).
%\end{definition}

\begin{definition}
\label{df5.1} Fix an integer $q \geq 1$ and put
\beq
\label{5.26}
B_q(Q_T)
\mathrel{\mathop:}=
\sum_{s=0}^q
(-1)^{s} {q \choose s} Q^{s}_T(I)
\eeq
(see \eqref{QT}, \eqref{QkT}).
We say that $T$ is a
{\it joint $q$-contraction} (respectively,
{\it joint $q$-expansion}) if
$B_q(Q_T) \ge 0$ (respectively,
$B_q(Q_T) \le 0$).
\end{definition}

We say that $T$ is a {\it joint $q$-hyperexpansion} if $T$ is a
joint $k$-expansion for all $k=1, \cdots, q.$ Also, $T$ is said
to be a {\it joint complete hyperexpansion} if $T$ is a
joint $q$-hyperexpansion for all $q \geq 1.$
If $B_q(Q_T)=0$, then $T$ is a {\it joint $q$-isometry}.
If $m=1$ then we drop the prefix $1$- and term joint in all the above definitions. \\

%Please note
%that these definitions differ from those given in \cite{At-1}, \cite{Ch-Cu} and
%\cite{Ch-Sh} under the same title, where the spherical symmetry is
%not the part of the definition.

% \noindent {\bf Caution.} {\it A spherical complete hyperexpansion (resp.
% spherical $p$-isometry) is not spherical in the sense of Definition
% 1.1.} For instance, if $U$ denotes the unilateral shift, then the
% $2$-tuple $(U/\sqrt{2}, U/\sqrt{2})$ is a spherical isometry, and
% hence a spherical complete hyperexpansion. However, $(U/\sqrt{2},
% U/\sqrt{2})$ is not spherical.

The Bergman $m$-shift is jointly subnormal while the
Drury-Arveson $m$-shift is a joint $m$-isometry \cite{G-R}.
The Szeg\"o $m$-shift being a joint isometry is jointly subnormal. It is also a joint $q$-isometry for any $q \ge 1.$

\begin{remark}
\label{rk5.2}
Let $T$ be a spherical $m$-tuple.
Assume further that $T$ is a joint $p$-isometry or a joint
$2$-hyperexpansion. Then the approximate point-spectrum
$\sigma_{ap}(T)$ of $T$ is a subset of the unit sphere \cite[Proposition 3.4]{Ch-Sh}.
Since $\sigma_{ap}(T)$ is always non-empty, by its spherical symmetry, it must
be the entire unit sphere.
\end{remark}

\begin{theorem}
\label{le5.4} Let $T : \{w^{(i)}_n\}$ be a
spherical
%
%balanced
%
$m$-variable weighted shift.  Let $T_{\delta} : \{\delta_{k}\}_{ k
\in \mathbb N}$ be the  one-variable weighted shift associated with
$T$ (see the Definition \ref{df4.1}). Then we have the following
statements:
\begin{enumerate}
\item $T$ is jointly subnormal if and only if
$T_{\delta}$ is subnormal.
\item $T$ is a joint $q$-isometry if and only if
$T_{\delta}$ is a $q$-isometry.
\item $T$ is a joint $q$-expansion if and only if
$T_{\delta}$ is a $q$-expansion.
%\item $T$ is a joint $2$-hyperexpansion
%\marginpar{$q$-hyperexpansion?}
%if and only if
%$T_{\delta}$ is a $2$-hyperexpansion.
\item $T$ is a joint complete hyperexpansion if and only if
$T_{\delta}$ is a complete hyperexpansion.
\item $T$ is jointly
hyponormal if and only if $T_{\delta}$ is hyponormal.
\end{enumerate}
\end{theorem}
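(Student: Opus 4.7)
My plan rests on the identity~\eqref{Qken}, which combined with $\delta_k = \bbeta_{k+1}/\bbeta_k$ shows
\[
Q_T^k(I)\, e_n \;=\; \frac{\bbeta_{|n|+k}^2}{\bbeta_{|n|}^2}\, e_n,
\qquad
Q_{T_\delta}^k(I)\, f_j \;=\; \frac{\bbeta_{j+k}^2}{\bbeta_{j}^2}\, f_j.
\]
Thus $Q_T^k(I)$ is simultaneously diagonalized by $\{e_n\}$ with eigenvalue depending only on $|n|$ and matching that of $Q_{T_\delta}^k(I)$ on $f_{|n|}$. This single observation drives parts (1)--(4).

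For parts~(2)--(4): $B_q(Q_T)$ from~\eqref{5.26} is a linear combination of the $Q_T^s(I)$'s, hence diagonal in $\{e_n\}$ with eigenvalue on $e_n$ equal to $\langle B_q(Q_{T_\delta}) f_{|n|}, f_{|n|}\rangle$. Consequently, $B_q(Q_T)\geq 0$ (resp.\ $\leq 0$, resp.\ $=0$) is equivalent to the same condition on $B_q(Q_{T_\delta})$, giving~(2) and~(3); part~(4) is then the conjunction over $q\geq 1$ of~(3).

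For part~(1) I would invoke the multivariable Berger theorem: joint subnormality of $T$ is equivalent to the existence of a compactly supported positive Borel measure $\nu$ on $\mathbb{C}^m$ with $\|T^\al e_0\|^2 = \int |z^\al|^2\,d\nu(z)$ for all $\al\in\mathbb{N}^m$. Sphericality of $T$ lets one replace $\nu$ by its $\mathcal{U}(m)$-average without changing these moments, so $\nu$ may be taken to be $\mathcal{U}(m)$-invariant, i.e.\ $d\nu(z) = d\mu(r)\,d\sigma(\zeta)$ under $z = r\zeta$. Combining~\eqref{beta-n} with~\eqref{norm-B} then collapses the moment condition to $\bbeta_k^2 = \int_0^\infty r^{2k}\,d\mu(r)$, which by the classical Stieltjes criterion is exactly subnormality of $T_\delta$; the construction runs backwards to give the converse.

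Part~(5) requires a direct computation from~\eqref{comm_sph} and~\eqref{comm_sph2}. Since each $T_i$ maps $\Hom(k)$ to $\Hom(k+1)$ and each $T_j^*$ maps $\Hom(k+1)$ back to $\Hom(k)$, every commutator $[T_j^*, T_i]$ preserves $\Hom(k)$, so the operator matrix $([T_j^*, T_i])_{i,j}$ is block-diagonal over $\mathcal{H}^{(m)}=\bigoplus_k\Hom(k)^{(m)}$ into finite-dimensional blocks. Testing the joint-hyponormality quadratic form on tuples $h_a = \partial_a p/(k+1)$ for $p\in\Hom(k+1)$, and using the identities $T_a^*|_{\Hom(k+1)} = (\delta_k^2/(k+m))\,\partial_a$ and Euler's formula $\sum_a z_a\partial_a p = (k+1)p$, reduces the form to a two-term expression involving $\sum_i\|\partial_i p\|^2$ and $\|\Delta p\|^2$ with coefficients built from $\delta_k^2$, $\delta_{k-1}^2$, and $\mu_k := \delta_k^2/(k+m) - \delta_{k-1}^2/(k-1+m)$; specializing to the witness $p = z_1^2+\cdots+z_m^2$ (and analogues $\|z\|^{2s}\cdot q(z)$ at higher degrees) collapses this to a positive multiple of $\delta_k^2 - \delta_{k-1}^2$, forcing hyponormality of $T_\delta$. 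The main obstacle I anticipate is the converse direction: the ``obvious'' $\mathcal{U}(m)$-equivariance fails for the commutator matrix (the transformation law $[T_b^*,T_a]\mapsto\sum_{kl}\bar u_{bk}u_{al}[T_k^*,T_l]$ mixes the matrix indices and the $\Gamma(U)$-conjugation incompatibly), so showing that $\delta_k^2\geq\delta_{k-1}^2$ forces positivity on the complementary ``syzygy'' subspace $\{h:\sum_i z_i h_i=0\}$ requires a separate direct computation on that complement.
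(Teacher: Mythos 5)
Your parts (2)--(4) coincide with the paper's argument: both rest on the diagonalization \eqref{Qken}, which makes $B_q(Q_T)$ diagonal with the eigenvalue of $B_q(Q_{T_\delta})$ at $f_{|n|}$ appearing on $e_n$. Part (1) is where you genuinely diverge: the paper avoids representing measures altogether and instead quotes Athavale's characterization \cite[Theorem 5.2]{At-2} of joint subnormality (for $Q_T(I)\le I$) as positivity of the alternating sums $\sum_j(-1)^j\binom{p}{j}\sum_{|\alpha|=j}\frac{j!}{\alpha!}\|T^\alpha f\|^2$, which again reduces to the one-variable statement via \eqref{id}. Your Berger-measure route is also viable and arguably more transparent, but the claim that the $\mathcal U(m)$-average of a representing measure ``does not change the moments'' needs a one-line justification: each rotated measure $\nu_U$ is itself a representing measure because $\|(T_U)^n e_0\|=\|T^n e_0\|$ (since $\Gamma(U)e_0=\zeta(U)e_0$), and hence so is the average; after that, \eqref{beta-n} and \eqref{norm-B} do collapse the moment condition to the Stieltjes problem for $\{\bbeta_k^2\}$ exactly as you say.

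Part (5) contains a genuine gap, which you yourself flag: you only establish the ``only if'' direction (joint hyponormality of $T$ forces $\delta_k^2\ge\delta_{k-1}^2$, via test tuples $h_a=\partial_a p/(k+1)$), and you do not prove that monotonicity of $\{\delta_k\}$ yields positivity of the joint-hyponormality form on the syzygy subspace $\{h:\sum_a z_a h_a=0\}$. The paper closes this direction without any decomposition of the form: it invokes Curto's criterion \cite[Theorem 6.1]{Cu-1}, which states that $T$ is jointly hyponormal if and only if the $m\times m$ matrix $P=\big(w^{(i)}_{n+\epsilon_j}w^{(j)}_{n+\epsilon_i}-w^{(i)}_nw^{(j)}_n\big)$ is positive semidefinite for every $n$. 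Writing $w^{(i)}_n=\delta_{|n|}\alpha^{(i)}_n$ with $\alpha^{(i)}_n$ the Szeg\"o weights gives $P_{ij}=\delta_{|n|+1}^2Q_{ij}+(\delta_{|n|+1}^2-\delta_{|n|}^2)\alpha^{(i)}_n\alpha^{(j)}_n$, where $Q$ is the corresponding (positive semidefinite) matrix for the Szeg\"o $m$-shift; if $\{\delta_k\}$ is increasing, $P$ is a sum of a PSD matrix and a nonnegative rank-one term, hence PSD. You should either adopt this or carry out the syzygy computation explicitly; as written, the ``if'' half of (5) is unproved. For the ``only if'' half, note also that the paper's route is shorter than yours: joint hyponormality implies $Q_T^2(I)\ge Q_T(I)^2$ \cite[Lemma 4.10]{Ch-Sh}, which by \eqref{Qken} reads $\delta_{|n|}^2\delta_{|n|+1}^2\ge\delta_{|n|}^4$, i.e.\ $\delta_{k+1}\ge\delta_k$.
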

% \marginpar{\bf Change-15 \\ Clubed Lemmas \\ 5.4 and 5.5}
\begin{proof}
The desired conclusions in (2)-(4) follow immediately from (\ref{id}).

To see (1), without loss of generality, we may assume that
$Q_T(I)=T^*_1T_1 + \cdots + T^*_mT_m \leq I.$ By \cite[Theorem
5.2]{At-2}, an operator $m$-tuple $T$ such that $Q_T(I) \leq I$ is
jointly subnormal if and only if
$$\sum_{j=0}^p (-1)^j {p \choose j}
\sum_{|\alpha|=j}\frac{j!}{\alpha!}\|T^{\alpha}f\|^2 \geq 0$$ for
every $f \in \mathcal H$ and every $p, k \in \mathbb N$. Now (1) may
be derived from \eqref{id}.
%The first part now follows from
%(\cite{Ch}, Lemma 3.3). On same lines, the conclusions in (3), (4),
%(5) can also be derived from (\cite{Ch}, Lemma 3.3).

%it takes place iff
%it suffices to check that $T$ is jointly hyponormal if

To see (5), let us recall first that $T_\de$ is hyponormal if and
only if and only if $\{\delta_k\}_{k \in \mathbb N}$ is an
increasing sequence. Suppose first that $T_\de$ is
hyponormal.
% \marginpar{I've changed $k \to n$ in several places}
By \cite[Theorem 6.1]{Cu-1},
$T$ is jointly hyponormal if and only if the matrix
\beqn
P=\left(
w^{(i)}_{n + \epsilon_j}w^{(j)}_{n + \epsilon_i}-w^{(i)}_n w^{(j)}_n
\right)_{1 \leq i, j \leq m}
\eeqn
%
%Proposition~\ref{p-decom}, there exist a block diagonal positive
%operator $D$ with diagonal multi-sequence $\delta_{|n|}$ such that
%
is positive definite for every $n \in \mathbb N^m.$
By \eqref{w-i-n},
$$
w^{(i)}_n=\delta_{|n|} \alpha^{(i)}_n~~(n
\in \mathbb N^m, i=1, \cdots, m),
$$
where
$\alpha^{(i)}_n:=\sqrt{\frac{n_i + 1}{|n|+m}}$ is the weight
multi-sequence of the Szeg\"o $m$-shift.
It is easy to see that the matrix
\beqn
Q=\left(\alpha^{(i)}_{n +
\epsilon_j}\alpha^{(j)}_{n +
\epsilon_i}-\alpha^{(i)}_n \alpha^{(j)}_n \right)_{1 \leq i, j
\leq m}
\eeqn
is positive definite for every $n \in \mathbb N^m.$
Let $P_{ij}, Q_{ij}$ denote the $(i, j)$th entry of $m \times m$
matrices $P, Q$ respectively.
It follows that
\beqn
P_{ij}= \delta^2_{|n|+1}Q_{ij} +
(\delta^2_{|n|+1}-\delta^2_{|n|})\alpha^{(i)}_n\alpha^{(j)}_n
\eeqn
Since $\{\delta_k\}_{k \in \mathbb N}$ is an
increasing sequence, $P$ is positive definite.

Conversely, suppose $T$ is jointly hyponormal. Then it follows from
\cite[Lemma 4.10]{Ch-Sh} that $Q^2_T(I) \geq Q_T(I)^2,$ where
$Q_T(X)=T^*_1XT_1 + \cdots + T^*_mXT_m~(X \in B(\mathcal H)).$
%However, as observed in (\cite{Ch}, Proof of Lemma 3.3), \beqn
%\inp{Q^k_T(I)e_n}{e_n}=\delta^2_{|n|} \delta^2_{|n| +1} \cdots \delta^2_{|n| + k-1}~ (k \in \mathbb
%N, ~n \in \mathbb N^m). \eeqn
It is immediate from \eqref{Qken} that $\{\delta_k\}_{k \in \mathbb N}$ is an
increasing sequence.
\end{proof}

Let $p>0$, and let $M_{z, p}$ be as introduced in Example \ref{ex1.3}.
Note that the sequence $\delta_k$ there is given by $\delta^2_k=(k+m)/(k+p)$. It is now easy to deduce from Theorem \ref{le5.4}(5) that the tuple
$M_{z, p}$ is jointly hyponormal if and only if $p \ge m$.
As we already mentioned there, for $p \ge m,$ $M_{z, p}$ is actually jointly subnormal
(see \cite[Theorem 9.8]{Kapt} for a closely related fact).
Here are a few more consequences of Theorem \ref{le5.4}:
\begin{enumerate}

\item
$M_{z, p}$ is a spherical joint $2$-expansion if and only if $m-1 \le p \le m$.

\item $M_{z, p}$ is a spherical joint $q$-isometry if and only if $p$ is a
positive integer, $p\le m$ and $q\ge m-p+1$ (see Proposition~\ref{pr5.5} below).

% $p=m-q+1$ is a positive integer.

\end{enumerate}

It is a trivial consequence of Corollary \ref{cor-comp-Sp} and Theorem~\ref{le5.4}(5) that
for a spherical hyponormal $m$-variable weighted shift $T$, the cross-commutators
can belong to $S_p$ only if $p>m$.
We give an example of a spherical, jointly hyponormal $2$-variable
weighted shift for which none of the self-commutators belongs to
the Schatten class $\mathcal S^p$ for any $p<\infty$.
\begin{example}
% \marginpar{changed $\ga_k$ by $\rho_k$}
Define inductively the sequence $\{\rho_k\}_{k \in \mathbb N}$ as follows:
\beqn
\rho_0=1, \rho_{k+1}=\rho_k + \eta_k~(k \in \mathbb N),
\eeqn
where $\eta_k = 1/2^l$ if $k$ is of the form $2^{2^l};$ and $0$ otherwise.
Obviously, $\{\rho_k\}_{k \in \mathbb N}$ is increasing.
We next define $\bbeta_k$ inductively by setting $\bbeta_0=1$ and $\bbeta_{k+1}=\bbeta_k \sqrt{\rho_k}~(k \in \mathbb N).$
Then the $2$-variable weighted shift $T=(T_1,T_2)$ with weight multi-sequence
\beqn
w^{(i)}_n = \frac{\bbeta_{|n|+1}}{\bbeta_{|n|}} \sqrt{\frac{n_i+1}{|n|+2}}~(n \in \mathbb N^2, i=1, 2)
\eeqn
is bounded, spherical and jointly hyponormal.
Note that
\beqn
\sum_{k=1}^{\infty} k
\Big|\frac{\bbeta^2_{k+1}}{\bbeta^2_{k}}-\frac{\bbeta^2_{k}}
{\bbeta^2_{k-1}}
 \Big|^p = \sum_{k=1}^{\infty} k  \eta^p_k \geq 2^{2^l}\frac{1}{2^{lp}} \rar \infty \eeqn
as $l \rar \infty.$ By Theorem \ref{Thm_Sp}, $[T^*_j, T_j]$ does not
belong to the Schatten class $\mathcal S^p$ for any $p <\infty$ and any $j =1, 2$.
\end{example}

In a similar way, one give an example of a non-compact spherical
$2$-variable weighted shift $T$ such that the
corresponding one-variable shift $T_\de$ is a contraction
(or, equivalently, the sequence $\{\bbeta_k\}$ decays), but the cross-commutators do not belong to $S_p$ for any $p$.

For a sequence $\{f_k\}_{k=0}^\infty$, we put $\nabla f_k=f_{k+1}-f_k$.
In what follows, we denote
% \marginpar{This abbreviation has to be incorporated \\ I will prefer $\nabla f_k=f_{k}-f_{k+1}$ ?}
\[
\ga_k=\bbeta_k^2.
\]

% \marginpar{\bf Change-18 \\ Question \\ Shifted}

% Let $M_z=(M_{z_1}, \cdots, M_{z_m})$ be a spherical multiplication
% $m$-tuple in $H^2(\beta)$ such that the sequence
% $\{\frac{\bbeta_{k+1}}{\bbeta_k}\}_{k \in \mathbb N}$ is increasing.
% It can be seen that $[M^*_{z_j}, M_{z_k}]$ does not belong to
% Schatten class $\mathcal S^p$ for any $1 \leq j, k \leq m$ and any
% $p \leq m.$

% Answered already

%\begin{question}
%\label{q4.8} Let $T : \{{w^{(i)}_n}\}_{ n \in \mathbb N^m}$ be an
%$m$-variable weighted shift. If $T=(T_1, \cdots, T_m)$ is spherical
%then, for $1 \leq j, k \leq m$ and $p \leq m,$ is it true that
%$[T^*_j, T_k]$ belongs to the Schatten class $\mathcal S^p$ ?
%\end{question}
% Here is a partial answer to Question \ref{q5.10}.

% We will need the following fact, which is a reformulation of
% \cite[Theorem~3.4]{B-M-N}.
The following is certainly known (see, for instance, \cite[pg 50]{Sh-At}).
We include a
short proof for reader's convenience.
\begin{proposition}
\label{pr5.5}
Let $\{\bbeta_k\}_{k=0}^\infty$ be a $1$-variable sequence and
$M_z$ the multiplication operator by $z$ acting on the
($1$-variable) space $H^2(\bbeta)$.

\begin{enumerate}
\item
$M_z$ is a $q$-isometry
if and only if there is a polynomial $S$ of degree $q-1$ or less such that
$\ga_k=S(k)$ for all $k\in \mathbb N$.

\item $M_z$ is
% \marginpar{Added - please \\ verify the signs}
a $q$-expansion if and only if $(-1)^q \nabla^q\ga_k \le 0$ for any $k \in \mathbb N.$

\end{enumerate}

\end{proposition}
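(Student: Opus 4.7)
The strategy is to diagonalize $B_q(Q_{M_z})$ in the canonical orthonormal basis $\{e_k:=z^k/\bbeta_k\}_{k\ge 0}$ of the one-variable space $H^2(\bbeta)$, and then read off both assertions from the resulting diagonal entries.

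First I would compute, for each $s\ge 0$, the diagonal form of $Q_{M_z}^s(I)=M_z^{*s}M_z^s$. Since $M_z^s e_k=(\bbeta_{k+s}/\bbeta_k)\,e_{k+s}$, one gets
\[
Q_{M_z}^s(I)\,e_k \;=\; \|M_z^s e_k\|^2\, e_k \;=\; \frac{\ga_{k+s}}{\ga_k}\,e_k,
\]
so that $Q_{M_z}^s(I)$ is diagonal with entries $\ga_{k+s}/\ga_k$. Linearity of \eqref{5.26} then yields
\[
B_q(Q_{M_z})\,e_k \;=\; \frac{1}{\ga_k}\sum_{s=0}^{q}(-1)^s\binom{q}{s}\ga_{k+s}\,e_k
\;=\; \frac{(-1)^q\,\nabla^q \ga_k}{\ga_k}\,e_k,
\]
using the standard identity $\nabla^q\ga_k=\sum_{s=0}^{q}(-1)^{q-s}\binom{q}{s}\ga_{k+s}$ for the $q$-th forward difference.

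From this diagonal representation, (2) is immediate: $B_q(Q_{M_z})\le 0$ on $H^2(\bbeta)$ is equivalent to every diagonal entry being $\le 0$, and since $\ga_k>0$, this is exactly the condition $(-1)^q\nabla^q\ga_k\le 0$ for all $k\in\mathbb N$. Similarly, $M_z$ is a $q$-isometry iff $B_q(Q_{M_z})=0$ iff $\nabla^q\ga_k=0$ for all $k\in\mathbb N$.

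For (1) it remains to observe that a sequence $\{\ga_k\}_{k\in\mathbb N}$ satisfies $\nabla^q\ga_k\equiv 0$ if and only if $\ga_k=S(k)$ for a polynomial $S$ of degree $\le q-1$. The ``if'' direction is the elementary fact that $\nabla$ lowers the degree of any polynomial by one. For the ``only if'' direction, I would argue by induction on $q$: the case $q=1$ is just that $\ga_{k+1}=\ga_k$ makes $\ga$ constant, and in general, $\nabla^q\ga=0$ means $\nabla\ga$ satisfies $\nabla^{q-1}(\nabla\ga)=0$, so by induction $\nabla\ga_k=R(k)$ for a polynomial $R$ of degree $\le q-2$; then $\ga_k=\ga_0+\sum_{j=0}^{k-1}R(j)$ is a polynomial in $k$ of degree $\le q-1$ by the Faulhaber-type formulas (equivalently, the binomial coefficients $\binom{k}{0},\binom{k}{1},\dots,\binom{k}{q-1}$ form a basis of the kernel of $\nabla^q$ on sequences indexed by $\mathbb N$, and each is a polynomial in $k$ of degree $<q$). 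There is no real obstacle here; the only thing to be slightly careful about is the sign bookkeeping in passing between $\sum(-1)^s\binom{q}{s}\ga_{k+s}$ and $(-1)^q\nabla^q\ga_k$.
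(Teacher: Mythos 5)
Your proof is correct and follows essentially the same route as the paper: diagonalize $Q_{M_z}^s(I)=M_z^{*s}M_z^s$ in the monomial basis, identify the diagonal entries of $B_q(Q_{M_z})$ with $(-1)^q\nabla^q\ga_k/\ga_k$, and read off both conditions. You are somewhat more explicit than the paper (which omits part (2) and leaves the equivalence $\nabla^q\ga\equiv 0\iff\ga_k=S(k)$ implicit), but the argument is the same.
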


\begin{proof}
(1): By the definition, $M_z$ is a $q$-isometry if and only if
$
\sum_{s=0}^q
(-1)^{s} {q \choose s} M_z^{^*s} M_z ^s =0
$.
The left hand part is a diagonal operator in the basis
$\{z^k\}$ (for any weights $\{\bbeta_k\}$). Hence
$M_z$ is a $q$-isometry iff
$
\sum_{s=0}^q
(-1)^{s} {q \choose s} \inp{M_z^s z^k}{M_z^s z^k} =0
$
for any $k\in\mathbb N$, which happens iff $\nabla^q \ga_k\equiv 0$.
This gives (1).

(2): This can be proved along the lines of the verification of (1),
and hence we skip it.
\end{proof}

Now it follows from Theorem \ref{le5.4}(2)
that a spherical $m$-tuple $M_z$ is a $q$-isometry
if and only if the corresponding scalar sequence
$\{\bbeta_k\}$ satisfies $\bbeta_k^2=S(k)$, $k\in \mathbb N$,
for some polynomial $S$, whose degree is less or equal than $q-1$ .
In particular, we get the following fact.
\begin{corollary}
Let $M_z$ be a spherical $q$-isometry. Then $[M_{z_j}^*, M_{z_l}]$ is in $S_p$ for all $j, l$
if and only if $p>m$.
\end{corollary}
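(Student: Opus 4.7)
The plan is to combine the $q$-isometry characterization (Proposition~\ref{pr5.5} together with Theorem~\ref{le5.4}(2)) with the Schatten criterion of Theorem~\ref{Thm_Sp} and the cut-off of Corollary~\ref{cor-comp-Sp}.

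First I would translate the hypothesis. Since $M_z$ is a spherical joint $q$-isometry, by Theorem~\ref{le5.4}(2) the associated one-variable shift $T_{\delta}$ is a $q$-isometry, and by Proposition~\ref{pr5.5}(1) there exists a polynomial $S$ of degree $d\le q-1$ with $\bbeta_k^2=\ga_k=S(k)$ for all $k\ge 0$. Consequently
\[
\de_k^2 \;=\; \frac{\bbeta_{k+1}^2}{\bbeta_k^2}\;=\;\frac{S(k+1)}{S(k)}\;=\;1+\frac{d}{k}+O\!\Big(\frac{1}{k^2}\Big)\qquad (k\to\infty),
\]
so in particular $\de_k\to 1$. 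By Remark~\ref{rem-compact} none of the $M_{z_j}$ is compact.

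For the necessity of $p>m$, the tuple is non-compact, so the conclusion that $p>m$ whenever all cross-commutators lie in $S_p$ is immediate from Corollary~\ref{cor-comp-Sp}.

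For the sufficiency, I would verify the two-sum criterion \eqref{expr-Sp} of Theorem~\ref{Thm_Sp} when $p>m$. For the first sum, the boundedness of the tuple gives $\sup_k \de_k^{2p}<\infty$, hence
\[
\sum_{k=1}^{\infty}\de_k^{2p}\,k^{m-p-1}\;\le\; C\sum_{k=1}^{\infty}k^{m-p-1}\;<\;\infty
\]
because $m-p-1<-1$. For the second sum I would exploit that $k\mapsto S(k+1)/S(k)$ is a rational function tending to $1$ with derivative of order $k^{-2}$; more precisely, writing
\[
\de_k^2-\de_{k-1}^2 \;=\;\frac{S(k+1)S(k-1)-S(k)^2}{S(k)S(k-1)},
\]
one checks that the numerator is a polynomial in $k$ of degree at most $2d-2$ while the denominator has degree $2d$, so $\de_k^2-\de_{k-1}^2=O(k^{-2})$. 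Therefore
\[
\sum_{k=1}^{\infty}\bigl|\de_k^2-\de_{k-1}^2\bigr|^p k^{m-1} \;\le\; C'\sum_{k=1}^{\infty}k^{m-1-2p}\;<\;\infty
\]
since $p>m>m/2$ yields $m-1-2p<-1$. Both sums in \eqref{expr-Sp} converge, so Theorem~\ref{Thm_Sp} gives that all cross-commutators belong to $S_p$.

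The only real point requiring care is the asymptotic analysis of $\de_k^2-\de_{k-1}^2$; however this amounts to a trivial observation about the rational function $S(\cdot+1)/S(\cdot)$ and does not present any genuine obstacle. Everything else is a direct application of results already established in the paper.
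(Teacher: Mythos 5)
Your proof is correct and follows essentially the same route as the paper: reduce to $\bbeta_k^2=S(k)$ via Theorem~\ref{le5.4}(2) and Proposition~\ref{pr5.5}(1), then use the cut-off (Corollary~\ref{cor-comp-Sp}) for necessity and the Schatten criterion of Theorem~\ref{Thm_Sp} for sufficiency. The only cosmetic difference is that the paper packages the final step as an appeal to Corollary~\ref{cor:de_k-1} (whose hypothesis $|\de_{k+1}-\de_k|\le C/k$ follows from your estimate $\de_k^2-\de_{k-1}^2=O(k^{-2})$), whereas you verify \eqref{expr-Sp} directly.
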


\begin{proof}
Assuming that the $m$-tuple $M_z$ is a $q$-isometry, we get
a polynomial $S$ of degree $d\le q-1$ such that
$\be_k^2=S(k)$, $k\in \mathbb N$.
Then $\de_k^2-1\sim d/k$, and we can apply Corollary~\ref{cor:de_k-1} to get our
statement.
\end{proof}

\begin{proposition}
\label{pr-conc}
Let $T$ be a $m$-variable spherical weighted
shift and $\{\bbeta_k\}$, $\{\de_k\}$ be the corresponding
$1$-variable sequences. Suppose that $\de_k\le C$ and that the sequence $\{\de_k\}$ does
not tend to zero. Put $\ga_k=\bbeta_k^2$ (as before).

\begin{enumerate}

\item If $\nabla^2 \ga_k\le 0$, then
all commutators $[T_j^*, T_k]$ are in $S_p$ iff $p>m$.

\item If $\nabla^3 \ga_k \le 0$ and $\bbeta_k>C>0$, then it is also true that
all commutators $[T_j^*, T_k]$ belong to $S_p$ iff $p>m$.

\end{enumerate}
\end{proposition}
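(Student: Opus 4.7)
The strategy in both parts is to verify the hypothesis $|\de_{k+1}-\de_k|\le C'/k$ of Corollary~\ref{cor:de_k-1}, so that its conclusion yields the desired equivalence with $p>m$. (The ``only if'' direction also follows directly from Corollary~\ref{cor-comp-Sp}: $\de_k\not\to 0$ forces $M_z$ non-compact by Remark~\ref{rem-compact}.) Setting $\rr_k:=\de_k^2=\ga_{k+1}/\ga_k$, the key reduction is to prove $|\rr_k-1|=O(1/k)$: this forces $\de_k\to 1$, so $\de_{k+1}+\de_k$ is bounded below, and
\[
|\de_{k+1}-\de_k|=\frac{|\rr_{k+1}-\rr_k|}{\de_{k+1}+\de_k} \le C\bigl(|\rr_{k+1}-1|+|\rr_k-1|\bigr)=O(1/k).
\]

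For part~(1), $\nabla^2\ga_k\le 0$ makes $\nabla\ga_k$ non-increasing, and positivity of $\ga_k$ forces $\nabla\ga_k\ge 0$ (otherwise the monotone sequence $\nabla\ga_k$ would remain negative and drag $\ga_k$ to $-\infty$). Telescoping yields $\ga_k\ge\sum_{j=0}^{k-1}\nabla\ga_j\ge k\nabla\ga_{k-1}$, hence $\rr_{k-1}-1=\nabla\ga_{k-1}/\ga_{k-1}\le \rr_{k-1}/k$, i.e., $\rr_{k-1}\le k/(k-1)=1+O(1/k)$; combined with $\rr_{k-1}\ge 1$ this gives $|\rr_{k-1}-1|=O(1/k)$.

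For part~(2), $\nabla^3\ga_k\le 0$ makes $\nabla^2\ga_k$ non-increasing with limit $M=\lim_k\nabla^2\ga_k$; positivity of $\ga_k$ excludes $M<0$ (otherwise eventually $\nabla^2\ga_k\le M/2<0$, so $\nabla\ga_k\to-\infty$ and $\ga_k\to-\infty$), so $\nabla^2\ga_k\ge 0$ and $\ga$ is convex. In the subcase $\nabla\ga_k\le 0$ for all $k$, convexity gives $\ga_k-\ga_0=\sum_{j<k}\nabla\ga_j\le k\nabla\ga_{k-1}$; combined with $\ga_k=\bbeta_k^2>C^2$ this yields $|\nabla\ga_{k-1}|\le(\ga_0-C^2)/k$ and hence $|\rr_{k-1}-1|=O(1/k)$.

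The main obstacle is the remaining subcase in which $\ga$ is convex and $\nabla\ga_{k_0}>0$ for some $k_0$; here the lower bound $\bbeta_k>C$ no longer plays a direct role, and one must bootstrap using both the convexity of $\ga$ and the non-increasingness of $\nabla^2\ga$. For $k\ge 4k_0$, three estimates combine: (a)~summing the non-decreasing sequence $\nabla\ga_j$ over $j\in[k/2,k-1]$ yields $\ga_k\ge(k/2)\nabla\ga_{k/2}$; (b)~summing the non-increasing sequence $\nabla^2\ga_j$ over $j\in[k_0,k/2-1]$ together with $\nabla\ga_{k_0}>0$ yields $\nabla\ga_{k/2}\ge(k/4)\nabla^2\ga_{k/2}$; (c)~summing $\nabla^2\ga_j$ over $j\in[k/2,k-2]$ yields $\nabla\ga_{k-1}\le\nabla\ga_{k/2}+(k/2)\nabla^2\ga_{k/2}\le 3\nabla\ga_{k/2}$. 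Combining, $\nabla\ga_{k-1}/\ga_k\le 6/k$, i.e., $(\rr_{k-1}-1)/\rr_{k-1}\le 6/k$, which rearranges to $\rr_{k-1}\le k/(k-6)=1+O(1/k)$ for $k>6$, completing the proof.
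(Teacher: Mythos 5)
Your proof is correct and follows essentially the same route as the paper's: both reduce the statement to Corollary~\ref{cor:de_k-1} by establishing $|\de_{k+1}-\de_k|=O(1/k)$, with part (2) split according to whether $\nabla\ga_k$ is eventually positive or always nonpositive. Two small refinements worth noting: in part (1) you replace the paper's appeal to Richter's Lemma (used to get $\nabla\ga_k\ge 0$) by a direct monotonicity argument, and in part (2) you observe that positivity of $\ga_k$ together with $\nabla^3\ga_k\le 0$ already forces $\nabla^2\ga_k\ge 0$ for all $k$, which renders superfluous the paper's separate case ``$\nabla^2\ga_k\le 0$ for $k\ge k_0$''.
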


It will follow from the proof that it suffices to assume that
the inequality $\nabla^2 \ga_k\le 0$ or
$\nabla^3 \ga_k \le 0$ holds except for a finite number of indices.

\begin{proof}[Proof of Proposition~\ref{pr-conc}]
First observe that
\bq
\label{nabla2}
\de_{k+1}^2-\de_{k}^2
=
\frac {\ga_{k+2}}{\ga_{k+1}} - \frac {\ga_{k+1}}{\ga_{k}}
=
\frac {\nabla^2\ga_k}{\ga_k}
-
\frac {\nabla\ga_{k+1}\nabla\ga_k}{\ga_{k+1}\ga_k}\, .
\eq

\textbf{Proof of (1):} Suppose that $\nabla^2\ga_k\le 0$.
%Then $\{\nabla\ga_k\}$ decreases.
%If $\nabla\ga_{k_0}<0$ for some index $k_0$, then
%$\nabla\ga_k\le\nabla\ga_{k_0}<0$ for $k\ge k_0$, which
%would imply that $\ga_k<0$ for large values of $k$.
It follows from Richter's Lemma \cite[Lemma 6.9]{H-K-Z} that
$\nabla\ga_{k}\ge 0$ for all $k$. Since $\{\ga_k\}$ is a concave
sequence,
\[
\nabla\ga_k = \ga_{k+1} - \ga_k  \le \frac {\ga_k- \ga_{0}} k \le \frac {\ga_k} k
\]
for all $k\ge 1$. Hence
\[
|\nabla^2\ga_k| = - \nabla^2\ga_k
= \nabla \ga_k- \nabla \ga_{k+1} \le
\nabla \ga_k \le \frac {\ga_k} k\, .
\]
Therefore, by \eqref{nabla2},
\[
|\de_{k+1}^2 - \de_k^2|
\le
\frac {|\nabla^2\ga_k|}{\ga_k}
+
\frac {|\nabla\ga_{k+1}||\nabla\ga_k|}{\ga_{k+1}\ga_k}
\le
\frac 1 k + \frac 1 k \cdot \frac 1 {k+1}
\le
\frac 2 k\,  , \qquad k\ge 2.
\]
So the assertion (1) follows from Corollary~\ref{cor:de_k-1}.

\smallskip
\textbf{Proof of (2):}
We are assuming that $\nabla^3\ga_k\le 0$ and that
$\ga_k\ge C>0$. Then there is an index $k_0$ such that the sign of
$\nabla^2\ga_k$ is constant for $k\ge k_0$. If
$\nabla^2\ga_k\le 0$ for all $k\ge k_0$, the proof is as above.
So we can suppose that there is $k_0\in \mathbb N$ such that $0<\nabla^2\ga_k\le \nabla^2\ga_{k_0}$
for $k\ge k_0$. Hence $\{\nabla\ga_k\}_{k\ge k_0}$ is a growing sequence.
Let us distinguish two opposite cases.

\textbf{Case A:} {\it $\nabla\ga_k>0$ for large indices $k>k_0$.}
Then $\nabla\ga_k > C_1 >0$, and therefore $\ga_k \ge C_2 k$ for
large indices $k$, where  $C_2>0$. Next let $k> 2k_0$. If $k$ is
even, then
\[
\ga_k > \ga_k- \ga_{k/2}=\sum_{\ell=k/2}^{k-1} \nabla \ga_\ell \ge \frac k 2 \nabla \ga_{k/2}
\]
% (here $[k/2]$ is the entire part of $k/2$).
Since $\{\nabla \ga_\ell\}$ is concave,
\[
\frac {\nabla \ga_k + \nabla \ga_0} 2 \le \nabla \ga_{k/2} < \frac {2 \ga_k}k\, .
\]
Therefore
\[
\frac {\nabla \ga_k } {\ga_k} \le
\frac 4 k +\frac {|\nabla \ga_0| } {\ga_k} \le \frac {C_3} k\, .
\]
In the same way, one gets that $\frac {\nabla \ga_k } {\ga_k} \le \frac {C_3} k$ for odd $k$, $k>2k_0$
(just replace indices $0, k/2$ by $1, (k+1)/2$ in the above estimates).
Since $\{\nabla^2\ga_k\}$ is bounded,
\eqref{nabla2} implies the estimate $|\de_{k+1}-\de_k|\le C/k$, and we are done.

\textbf{Case B:}
%There exists $k_1>k_0$ such that $\nabla\ga_k\le0$ for $k>k_1$.
{\it $\nabla\ga_k\le 0$ for all $k \geq k_0.$}
%
%Since $\{\nabla\ga_k\}_{k\ge k_0}$ grows, it follows that $\nabla\ga_k\to 0$ as $k\to\infty$
%(otherwise $\be_k$ would be negative for large $k$.
%
Hence $\{|\nabla\ga_k|\}$ decays for $k\ge k_0$, and
\[
(k-k_0)|\nabla\ga_k| \le
\sum_{\ell=k_0}^{k-1} |\nabla\ga_\ell|
=\ga_{k_0}-\ga_{k}\le \ga_{k_0}
\]
for $k > k_0$, so that $|\nabla \ga_k|\le C/k$. Next, for $k > k_0$,
$\nabla^2\ga_k = |\nabla\ga_k| - |\nabla\ga_{k+1}| \le |\nabla\ga_k| \le C/k$, and once again, we obtain that
$|\de_{k+1}-\de_k|\le C/k$ by using \eqref{nabla2} and the assumption
$\bbeta_k\ge \operatorname{const} >0$.

Therefore in both cases A and B, Corollary~\ref{cor:de_k-1} implies
assertion (2).
\end{proof}

\begin{example}
In the part 2) of the last Proposition, one cannot drop the assumption $\bbeta_k\ge C>0$.
Indeed, define $\{\bbeta_k\}$ by $\bbeta_{2k}^2={12}^{-k}$ and
$\bbeta_{2k+1}^2={12}^{-k}/3$, $k\ge 0$. Then $(\nabla^3 \ga)_{2k} =-(2/9)\cdot{12}^{-k}$
and $(\nabla^3 \ga)_{2k+1} =-(23/144)\cdot{12}^{-k}$, $k\ge 0$, so that
$\nabla^3 \ga_k < 0$ for all $k$. On the other side,
$\de_{k+1}^2-\de_k^2 =(-1)^{k+1}/12$ for any $k$, so that
in this case, the tuple $M_z$ is bounded, but is not essentially normal
(and therefore the self-commutators do not belong to any $S_p$).
\end{example}

\begin{remark}  Let $T : \{{w^{(i)}_n}\}_{ n \in \mathbb
N^m}$ be a spherical  $m$-variable weighted shift and let $T_{\delta}: \{\delta_k\}_{k \in \mathbb N}$ be the shift associated with $T.$
Suppose that $\{\delta_k\}_{k \in \mathbb N}$ converges to a non-negative number $\lambda.$ Then by Remark \ref{le4.5}, $T$ is essentially normal.
Moreover, by Theorem \ref{th:3.1}(5), the essential spectrum of $T$ is $\partial \mathbb B_{\lambda}.$
This happens whenever $T$ is jointly hyponormal, a joint $q$-isometry or a joint $2$-hyperexpansion.
In case $T$ is jointly hyponormal, $\lambda=\|T_{\delta}\|$ while in the remaining two cases
$\lambda$ is equal to $1.$

%Assume further that $\lambda=1.$ It is easy to see that $T$ is a compact perturbation of the Szeg\"o $m$-shift $M_{z, m}$.
%As obtained in the proof of \cite[Proposition 2.6]{G-R-S},  the Fredholm index $\mbox{ind} (M_{z, m}-\lambda I)$ of $M_{z, m}-\lambda I$ equals $(-1)^m$ for every $\lambda \in \mathbb B.$
%Since Fredholm index is invariant under compact perturbations \cite[Theorem 6.6]{Cu}, we conclude that  $\mbox{ind} (T-\lambda I)=(-1)^m,$ $\lambda \in \mathbb B.$
\end{remark}

%\vspace{5mm}
%\begin{large}
%\noindent
%\textsc{Sameer Chavan}\\
%\end{large}
%{\it Department of Mathematics and Statistics}\\
%{\it Indian Institute of Technology, Kanpur} \\
%%{\it }\\
%{\it Kanpur- 208016, India}\\
%{\it E-mail Address: chavan@iitk.ac.in}

\end{document}